\newtheorem{theorem}{Theorem}[section]
\newtheorem{lemma}{Lemma}[section]
\newtheorem{corollary}{Corollary}[section]
\numberwithin{equation}{section}
\newcommand\mytop[2]{\genfrac{}{}{0pt}{}{#1}{#2}}
\def\genhyperF#1#2#3#4#5#6{{{}_{#1}F_{#2}}\!\left(\genfrac{}{}{0pt}{}{#3, #4}{#5};#6\right)}
\def\expe{{\rm e}}
\def\eps{\varepsilon}
\def\erfc{\operatorname{erfc}}
\def\Erfc{\operatorname{erfc}}
\def\erf{\operatorname{erf}}
\def\AbsErfc{{\bf |Erfc|}}
\def\im{{\rm i}}
\def\id{\,{\rm d}}
\def\d{{\rm d}}
\def\bigO{{\mathcal O}}
\def\Re{\operatorname{Re}}
\def\Im{\operatorname{Im}}
\title{Smoothing of the higher-order Stokes phenomenon}
\author{C. J. Howls}
\address{School of Mathematical Sciences, University of Southampton, Highfield, Southampton, SO17 1BJ, UK}
\email{C.J.Howls@soton.ac.uk}
\author{J. R. King}
\address{School of Mathematical Sciences, University of Nottingham, Nottingham, NG7 2RD, UK}
\email{John.King@nottingham.ac.uk}
\author{G. Nemes}
\address{Department of Physics, Tokyo Metropolitan University, 1--1 Minami-osawa, Hachioji-shi, Tokyo, Japan 192-0397}
\email{nemes@tmu.ac.jp}
\author{A. B. Olde Daalhuis}
\address{School of Mathematics and Maxwell Institute for Mathematical Sciences, University of Edinburgh, Peter Guthrie Tait Road, Edinburgh, EH9 3FD, UK}
\email{A.OldeDaalhuis@ed.ac.uk}
\begin{document}

\begin{abstract}
For over a century the Stokes phenomenon had been perceived as a discontinuous change in the asymptotic representation of a function. In 1989 Berry \cite{Berry89} demonstrated it is possible to smooth this discontinuity in broad classes of problems with the prefactor for the exponentially small contribution switching on/off taking a universal error function form. Following pioneering work of Berk {\it et al.} \cite{BNR82} and the Japanese school of formally exact asymptotics \cite{Aokietal1994,AKT01}, the concept of the higher-order Stokes phenomenon was introduced in \cite{HLO04} and \cite{CM05}, whereby the ability for the exponentially small terms to cause a Stokes phenomenon may change, depending on the values of parameters in the problem, corresponding to the associated Borel plane singularities transitioning between Riemann sheets.  Until now, the higher-order Stokes phenomenon has also been treated as a discontinuous event.  In this paper we show how the higher-order Stokes phenomenon is also smooth and occurs universally with a prefactor that takes the form of a new special function, based on a Gaussian convolution of an error function. We provide a rigorous derivation of the result, with examples spanning the gamma function, a second-order nonlinear ODE and the telegraph equation, giving rise to a ghost-like smooth contribution present in the vicinity of a Stokes line, but which rapidly tends to zero on either side.  We also include a rigorous derivation of the effect of the smoothed higher-order Stokes phenomenon on the individual terms in the asymptotic series, where the additional contributions appear prefactored by an error function.
\end{abstract}

\maketitle

\tableofcontents

\section{Introduction}

The Stokes phenomenon \cite{Stokes1857} is the apparent discontinuous appearance of additional exponentially 
small contributions in asymptotic representations of functions as either an asymptotic independent variable $z$, $z\rightarrow \infty$,  or some other set of bounded system parameters $\mathbf{a}=\{a_1,a_2,\ldots \}$, $a_j\in \mathbb{C}$, vary smoothly across co-dimension $1$ sets known as Stokes lines or, in higher dimensions, Stokes sets.  Such exponentially small terms may be locally neglected numerically, but can grow in size elsewhere to determine the global properties of the function.

In 1989, Berry \cite{Berry89} showed that under general assumptions as to the form of the late terms in an 
asymptotic series, the switching on of exponentially small terms in the Stokes phenomenon actually occurs 
smoothly across the Stokes sets, with a universal error function prefactor.  This result has subsequently been 
rigorously justified in a wide variety of contexts \cite{Berry1991,Boyd1990,OCK95,Olver1991b,Olver1991a,Paris1992}. 

Historically, detailed studies of the Stokes phenomenon had often focused on systems involving only two 
different asymptotic contributions.  Consequently it was not until 1982 when Berk {\it et al.} \cite{BNR82} 
studied a third-order ordinary differential equation that it was noticed that, when there are three or more distinct asymptotic contributions to a function, two new analytical features can occur as the parameters $z$ or $\mathbf{a}$ vary. 

The first new analytical feature is that so-called ``new Stokes lines''  \cite{AKT01} may sprout from points at which the traditional Stokes lines cross. The number of asymptotic contributions to the function may change across ``new Stokes lines'' but, unlike their traditional counterparts which may sprout from turning points (or caustics) in the finite plane where the arguments of the exponential prefactors coalesce and simultaneously the individual terms in the asymptotic contributions diverge, the new Stokes lines emerge from crossing points at which these terms are regular.

The second analytical feature is that, in order for monodromy to be maintained, the activity of some of the Stokes lines themselves is switched off or on as the parameters $z$ or $\mathbf{a}$ vary.

Howls, Langman and Olde Daalhuis \cite{HLO04} termed such change in the activity of a Stokes line as the ``higher-order Stokes phenomenon''  and showed that it occurs as $z$ and/or $\mathbf{a}$ cross a ``higher-order Stokes line''  in parameter space, using an approach based on the use of exact, hyperasymptotic, re-expansions that explore sub-subdominant exponential terms. That approach tied the activity-switching phenomenon to the movement of singularities (that generate the asymptotic contributions) between Riemann sheets in the Borel-plane representation of the function. 

Chapman and Mortimer \cite{CM05} and Body {\it et al.} \cite{Body2005} carried out an independent, parallel, analysis of the higher-order Stokes phenomenon for specific equations using matched asymptotic expansions.

The higher-order Stokes phenomenon is linked to the presence of ``virtual turning points'' \cite{Aokietal1994,AKT01, CHK07}. At a virtual turning point, from a Borel-plane viewpoint, the exponents of the exponential prefactors of the ``coalescing'' contributions become equal, but the corresponding singularities in the Borel plane do not simultaneously coincide.  Rather, like aircraft at different altitudes, they pass directly above one another on different Riemann sheets \cite{CHK07}.

 More generally, as a higher-order Stokes line (or set) is crossed, although no new asymptotic contribution is switched on at the first subdominant exponential level (as it would be at an ordinary Stokes phenomenon), the Stokes multiplier prefactoring the terms that are to be switched off/on in a later Stokes phenomenon may change.  If this multiplier becomes zero across a higher-order Stokes line, the activity of the later Stokes line is switched off.  If this multiplier grows from zero, then the activity of the later Stokes line is switched on. More generally, this multiplier might change between two non-zero values across the higher-order Stokes line, giving rise to a change in the strength of the nearby Stokes phenomenon.  As we recall below, this is manifested as a change in the exact analytical hyperasymptotic re-expansion of the asymptotic remainder terms that would generate that later Stokes phenomenon. 

Until recently, the higher-order Stokes phenomenon had only been studied in discrete form, i.e., before and after the event had occurred, \cite{CM05,HLO04,HoOD12,OD04b}.  
In a recent paper, Nemes \cite{Nemes2022} investigated the higher-order Stokes phenomenon with a focus on cases where singularities in the Borel plane are collinear and equally spaced. He demonstrated that, in this specific case, the higher-order Stokes phenomenon occurs smoothly, and the smooth transition can be described using multi-variable polynomials of error functions.

In this paper we show that the higher-order Stokes phenomenon can be observed to occur smoothly in more general settings. However, in contrast to the error function smoothing of the ordinary Stokes phenomenon, the smooth prefactor of the higher-order Stokes phenomenon that takes the form of a {\it new}
special function, being a (semi-)convolution of a Gaussian and an error function, reminiscent of a power normal 
distribution \cite{Goncalves2019}.

In parallel work we note that Shelton {\it et al.} \cite{Shelton2023} have recently considered the effect of the higher-order Stokes phenomenon on the re-expansion of 
the individual terms in the asymptotic series. They have shown using formal Borel re-summation of a Dingle resurgence 
expansion of an individual late term in an 
asymptotic series that the switching on of sub-subdominant terms is of standard error function type. 

In contrast, we consider the whole progenitor function of the asymptotic series, rather than just the individual 
terms. The approach we take is valid for all functions that possess a hyperasymptotic expansion in terms of hyperterminants.  Our wider approach not only provides a 
rigorous proof of the form of the smoothing for the whole progenitor function, but, as a special case, of the corresponding smoothing for individual terms.

The paper has been split into four parts, so that the reader may focus on those that most interest them. The first part of the paper (\S\ref{Borelplanebackground}--\ref{mainresults}) contains details of the underpinning background and a summary of the results. The main new results governing the three types of phenomena that are encountered are summarised in equations \eqref{simpledouble}, \eqref{ourarctan}, \eqref{Stokes2aa} and \eqref{sigma1int3}, with notation defined in \S\ref{Notation}. The second part (\S\ref{GammaEx}--\ref{termsmoothingexample}) contains four examples of the effects of the smoothing of the higher-order Stokes phenomenon in diverse contexts. This includes in \S\ref{termsmoothingexample} a rigorous approach to the smoothing of additional contributions to the late terms in an asymptotic expansion as a higher-order Stokes phenomenon occurs. The third part (\S\ref{AllStokessection}--\ref{discussion}) contains statements and rigorous proofs of the theorems, together with a conclusion and discussion. Theorems \ref{hypergeom}--\ref{simpledoublesmoothingthm0extreme} rigorously underpin the new results. Three appendices (\ref{newrep}--\ref{Sect:pseudo}) contain the derivation of new integral representations for the underpinning second hyperterminant function, analytical and computational properties of the subsequent new special function governing the higher-order smoothing, and a discussion of the circumstances leading to the example in \S\ref{telegraphexample}.

\section{Borel plane background}\label{Borelplanebackground}

The basis for the analysis of this paper is the iterative hyperasymptotic re-expansion of the exact
remainder terms of the asymptotic expansions of a function $w(z;{\bf a})$ as 
$ z \rightarrow \infty$. The function might have, for example, a (multiple) integral representation 
\cite{BHNO2018,BH91,DH02,Howls92,Howls97}, 
or may satisfy an ODE \cite{HO03, OD95, OD98b, OO95b, OO95a,Crew2024}, a difference equation 
\cite{OldeDaalhuis2004,Olver2000}, a differential-difference equation \cite{Deng2023} or a PDE \cite{Body2005,CHK07,HLO04},
but can also represent eigenvalues \cite{Alvarez2002a,Alvarez2002b}. The sole 
requirement is the existence of a Borel transform representation for $w(z;{\bf a})$ of the form
\begin{equation}
    w_k(z;{\bf a})=\frac{1}{2\pi \im}\int_{\gamma_k(\bf a)}\expe^{zt}B_k(t;{\bf a}) \id t.
    \label{wkdef}
\end{equation}

Here the function $B_k(t;{\bf a})$, known as the Borel transform of $w_k(z;{\bf a})$, has just (for the purposes only of simplifying the explanation) algebraic branch points and/or poles, say at $t=\lambda_j({\bf a})$, in the finite complex $t$-plane, also referred to as the Borel plane. 
As seen below, the explicit closed form of $B_k(t;{\bf a})$ is not actually required, only a (finite) number of terms in its expansion around relevant singularities.

\begin{figure}[ht]
\centering\includegraphics[width=0.5 \textwidth]{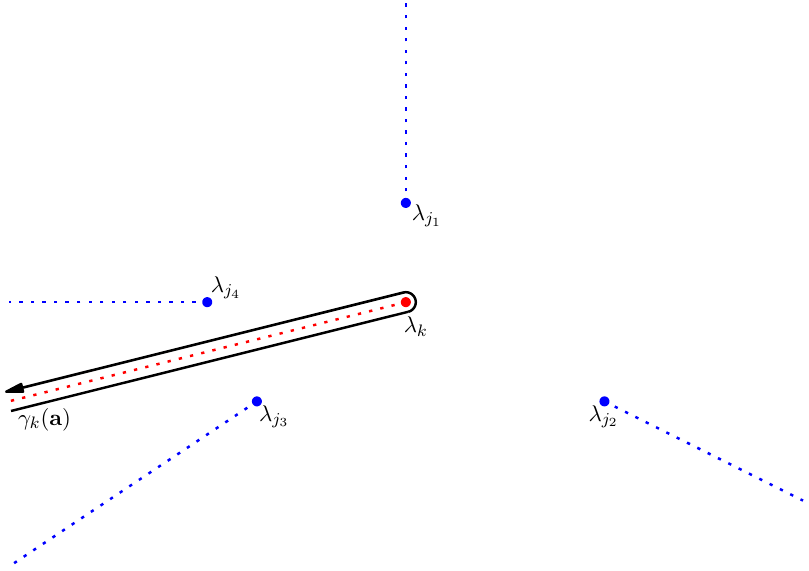}%
\caption{Sketch of the Borel $t$-plane with singularities $\lambda_k, \lambda_j, j\ne k$ and the associated integration contour 
$\gamma_k({\bf a})$.}
\label{figm1}
\end{figure}

The contour $\gamma_k(\bf a)$ runs from $\infty$ in the Borel plane along the left-hand side of a branch cut emanating 
from one of these branch points at $t=\lambda_k({\bf a})$, encircles this branch point once in the positive sense, and back to $\infty$ along the right-hand side of the cut. The cut orientation is initially chosen such that it encounters no other singularities $\lambda_j({\bf a})$, $j=1,2,3, \dots, j\ne k,$ of $B_k(t;{\bf a})$; see Figure \ref{figm1}.  $B_k(t;{\bf a})$ is exponentially bounded at infinity in the direction of $\gamma_k(\bf a)$, so that the integral converges for all sufficiently large values of $z$.
In practice, this means that, if  $\lambda_k({\bf a})$ is an algebraic singularity, $B_k(t;{\bf a})$ may have a convergent expansion of the form
\begin{equation}
    B_k(t;{\bf a}) = \sum_{r=0}^\infty T_r^{(k)} ({\bf a})\Gamma (\mu _k   - r+ 1)\left(t - \lambda _k ({\bf a})\right)^{r - \mu _k  - 1}, \qquad \mu_k \notin \mathbb{Z},
    \label{convseries}
\end{equation}
with a finite radius of convergence to the nearest singularity $\lambda_j({\bf a})$, $j\ne k$ on the same 
Riemann sheet in $t$.  Analogous expansions can be written down if $\lambda_k({\bf a})$ is actually a logarithmic singularity \cite{Alvarez2000,Howls97} or is of a combined power-log form.

Substitution of the convergent series (\ref{convseries}) into (\ref{wkdef}), followed by integration over the 
chosen contour $\gamma_k(\bf a)$, produces the divergent asymptotic series
\begin{equation}\label{divergentseries}
w_k(z;{\bf a})\sim \expe^{\lambda_k({\bf a})z}z^{\mu_k} \sum_{r=0}^\infty\frac{T^{(k)}_r({\bf a})}{z^r},
\end{equation}
as $z\to\infty$.

Each of the other branch points $\lambda_j({\bf a})$ give rise to functions $w_j(z;{\bf a})$ 
defined in an analogous way to (\ref{wkdef}) in terms of loop contours around the radial cuts between $\lambda_j({\bf a})$ and infinity (see Figure \ref{figm1}).

When two or more of these singularities coalesce, i.e., for values of ${\bf a}={\bf a^*}$ at which $\lambda_l({\bf a^*}) = \lambda_m({\bf a^*})$, $l, m =1,2,3, \dots$, then a caustic will occur and the local form of the asymptotic expansion must change to avoid individually singular terms. We avoid values of ${\bf a}$ for which this occurs. 

The advantage of using the Borel approach is that it is possible to provide exact representations of the remainder of a truncated convergent series (\ref{convseries}) in terms of contributions from the adjacent singularities $\lambda_j({\bf a})$, $j\ne k$ on the same Riemann sheet of the complex Borel $t$-plane. 

To see this, we now extract the exponential and algebraic prefactors of the large-$z$ asymptotic expansions and write each of the $w_j(z;{\bf a})$ as
\begin{equation}\label{formalsol}
    w_j(z;{\bf a})=\expe^{\lambda_j({\bf a})z}z^{\mu_j}T^{(j)}(z;{\bf a}), \qquad j=1,2,3, \dots\,\,.
\end{equation}

Then, assuming that all the singularities in the Borel plane are of algebraic type, a calculation similar to that in \cite{BH91,Howls1996,OD98b} provides a representation for the remainder term in the asymptotic expansion \eqref{divergentseries} of $w_k(z;{\bf a})$ as follows:
\begin{equation} \label{exact}
    T^{(k)}(z;{\bf a})=\sum_{r=0}^{N_0-1}\frac{T^{(k)}_r({\bf a})}{z^r}+\frac{1}{2\pi\im}\sum_{j\ne k}\frac{K_{kj}({\bf a})}{z^{N_0-1}}\int_0^{[\pi-\theta_{jk}]}\frac{\expe^{\lambda_{jk}({\bf a})t}t^{N_0+\mu_{jk}-1}}{z-t}T^{(j)}(t;{\bf a})\id t,
\end{equation}
where
\begin{equation}\label{lambdamu}
    \lambda_{jk}({\bf a})=\lambda_{j}({\bf a})-\lambda_{k}({\bf a}), \qquad \theta_{jk}=\arg \lambda_{jk}({\bf a}),\qquad \mu_{jk} =\mu_{j} -\mu_{k},
\end{equation}
with the notation $\displaystyle \int_0^{[\eta]}=\int_0^{\infty \expe^{\im\eta}}$.  
Taking $N_0$ large enough guarantees that $\Re(N_0+\mu_{jk})>0$ for all $j\not= k$.  Note that the $\mu_{jk}$ may, or may not be, an integer or real.

The $\lambda_{jk}({\bf a})$ correspond to the (negative) singulants of Dingle \cite{Dingle73}.  The $K_{kj}({\bf a})$  are Stokes ``constants".  If the singularities $\lambda_{k}({\bf a})$ and $\lambda_{j}({\bf a})$ are not located on the same Riemann sheet, then $K_{kj}({\bf a})=0$. When $K_{kj}({\bf a})\ne 0$, then $\lambda_{k}({\bf a})$ and $\lambda_{j}({\bf a})$ are said to be adjacent singularities.  

More generally, and more accurately, these ``constants" are termed Stokes ``multipliers", since in the present context, they may change in value as a function of ${\bf a}$.

The exact representation for the remainder term on the right-hand side of \eqref{exact} is implicit, 
and depends on self-similar contributions $T^{(j)}(t;{\bf a})$ arising from adjacent singularities in the Borel plane. 

The right-hand side of (\ref{exact}) automatically encodes the Stokes phenomenon in the following way. As the parameters ${\bf a}$ are varied, the cut and associated contour $\gamma_k(\bf a)$ may rotate and the singularities $\lambda_k({\bf a}), \lambda_j({\bf a})$ may move around the Borel plane. When the quantity $\lambda_{jk}({\bf a}) z$ is real and negative (through the values of $z$ and/or ${\bf a}$), the branch cut from $\lambda_{k}({\bf a})$ and the associated contour $\gamma_k({\bf a})$ encounters the singularity $\lambda_{j}({\bf a})$. The remainder integral in (\ref{exact}) then encounters a pole in the denominator.  As the contour sweeps through this pole, it automatically generates the connection formula associated with the Stokes phenomenon:
\begin{equation}\label{ordStokes}
  T^{(k)}(z;{\bf a})^+=  T^{(k)}(z;{\bf a})^-+K_{kj}({\bf a})\expe^{ \lambda_{jk}({\bf a})z}z^{\mu_{jk}}T^{(j)}(z;{\bf a}),
\end{equation}
where $+$ and $-$ superscripts denote the functions $T^{(k)}(z;{\bf a})$ on either 
side of the Stokes line.

The Stokes phenomenon in the previous paragraph has been regarded as occurring due to changes in the system parameters ${\bf a}$.  It could also have occurred due to changes in the independent variable $z$, and so $z$ (or perhaps its argument) could be regarded as part of the system parameters.

Using the Borel-plane approach, it is therefore possible to write down an exact, but 
implicit, remainder term for the asymptotic expansion, which incorporates the 
analytic implications of the Stokes phenomenon, without the need to re-sum divergent 
series.  Hence this approach allows for both non-divergent analytical 
generality and, as we shall see in \S\ref{AllStokessection}, a mechanism for rigorous proof of the main results of this paper.

\section{The role of hyperterminants in encoding the Stokes and higher-order Stokes phenomena}\label{hyperterminants}

Due to their importance in what follows, we recall now the detailed role that hyperterminants play in encoding analytically the Stokes and higher-order Stokes phenomena.  

Let us assume that we may compute at least a finite number of terms in each of the divergent series expansions of $T^{(l)}(z;{\bf a}), l=k, k_1, k_2, \dots\,\,$.  We may then make explicit progress to obtain better than exponentially accurate results for a given function $T^{(k)}(z;{\bf a})$ through re-expanding the implicit remainder term by substituting analogous exact remainder terms for $T^{(j)}(t;{\bf a})$ in terms of its adjacent contributions from $T^{(l)}(t;{\bf a})$, $l\ne j$ into the right-hand side of (\ref{exact}).  The result, after two iterations, is a hyperasymptotic expansion of the form
\begin{equation}\label{hyperexpress}
\begin{split}
    T^{(k)}(z;{\bf a}) =  &\sum_{r=0}^{N_0-1}\frac{T^{(k)}_r({\bf a})}{z^r} \\
    &+ \frac{1}{z^{N_0-1}}\Bigg[ \sum_{k_1\ne k}\frac{K_{k_1k}({\bf a})}{2\pi \im} \left\{
             \sum_{s=0}^{N_{1}^{(k_1)}-1}T^{(k_1)}_s({\bf a})F^{(1)}\left(z;\mytop{N_0-s+\mu_{k_1k}}{\lambda_{k_1k}}\right)\right.\Bigg. \\
    &+\sum_{k_2\ne k_1}\frac{K_{k_2k_1}({\bf a})}{2\pi \im}
            \left\{\sum_{s=0}^{N_2^{(k_2)}-1}T^{(k_2)}_s({\bf a})F^{(2)}\left(z;\mytop{N_0-N_1^{(k_1)}+\mu_{k_1k}+1,}{\lambda_{k_1k},}\mytop{N_1^{(k_1)}-s+\mu_{k_2k_1}}{\lambda_{k_2k_1}}\right)   \right\} \\
    & + R^{(k)}_2(z;{\bf a})\Bigg.\Bigg\}\Bigg.\Bigg].
\end{split}
\end{equation}

Each successive re-expansion contains terms of the local expansion about each of the $\lambda_k$, $\lambda_{k_1}$, $\lambda_{k_2}$, etc., multiplied by a ``hyperterminant'' function $F^{(n)}$, $n=1,2,3, \dots$ \cite{OD96,OD98c,OD09a}. The more iterations, with suitable truncations $N_j^{(k_j)}$ chosen, the greater the overall accuracy that may be obtained.

Note that everything here is still finite and exact.  The remainder term $R^{(k)}_2(z;{\bf a})$ may be expressed exactly in terms of an integral (involving self-similar contributions from the singularities $k_3\ne k_2$ adjacent to those in the set $k_2$). In what follows, it does not contribute and so we do not need to state what it is explicitly.

The first hyperterminant takes the form:
\begin{equation}\label{F1}
F^{(1)}\left(z;\mytop{a}{\sigma_0}\right)=\int_0^{[\pi-\arg\sigma_0]}\frac{\expe^{\sigma_0\tau}\tau^{a-1}}{z-\tau}\id\tau,
\end{equation}
where $\Re(a)>0$, $\left|\arg(\sigma_0 z)\right|<\pi$. The parameter $\sigma_0$ represents the distance between singularities in the Borel plane, e.g., $\lambda_{jk}$, etc.
It is the simplest function that incorporates a Stokes phenomenon and which is compatible with a Borel transform representation.  To see this, we first expand the denominator $\left(z-\tau\right)^{-1}$ in (\ref{F1}) 
in a truncated geometric series and this gives us
\begin{equation}\label{F1asympt}
    F^{(1)}\left(z;\mytop{a}{\sigma_0}\right)=\sum_{n=0}^{N-1} \frac{\left(\expe^{\pi\im}/\sigma_0\right)^{a+n}
    \Gamma(a+n)}{z^{n+1}}+z^{-N}F^{(1)}\left(z;\mytop{a+N}{\sigma_0}\right).
\end{equation}
The right-hand side can be seen as an asymptotic expansion as $z\to\infty$. 
The terms in the series are typical for a divergent
asymptotic expansion that possesses a Borel transform \cite{OD98b}. As $n\to+\infty$ the coefficients grow like a factorial divided by a power: $\Gamma(a+n)/\left(\sigma_0 z\right)^n$, see \S\ref{termsmoothingexample}.

If we take $a$ as fixed, then the minimum term in this series expansion occurs at $n\approx \left|\sigma_0 z\right|$. This is known as the optimal number of terms.   From Stirling's approximation we obtain that the smallest term is of size
\begin{equation}\label{smallest}
    \left|\frac{\left(\expe^{\pi\im}/\sigma_0\right)^{a+n} \Gamma(a+n)}{z^{n+1}}\right|\sim
    \frac{\sqrt{2\pi}}{\left|\left(-\sigma_0\right)^{\im \Im (a)+1}\right|}\expe^{-\left|\sigma_0 z\right|}\left|z\right|^{\Re (a)-\frac32},
\end{equation}
as $z\to\infty$.
Truncating the series at $n\approx \left|\sigma_0 z\right|$ thus leads to exponential accuracy (within a sector between two Stokes lines) \cite{Dingle73}. Taking increasing numbers of subsequent terms will lead to a worsening approximation. 

So far we have restricted $z$ to the principal branch $\left|\arg(\sigma_0 z)\right|<\pi$. As these parameters change, the pole in the hyperterminant \eqref{F1} moves relative to the contour of integration. At $\arg(\sigma_0 z)=\pi$, the Stokes phenomenon occurs and the pole snags on the integration contour, thereafter giving birth to an additional exponentially small residue contribution $2\pi\im \expe^{\sigma_0 z}z^{a-1}$.  This gives rise to a connection formula:
\begin{equation}\label{connect1}
    F^{(1)}\left(z \expe^{-2\pi\im};\mytop{a}{\sigma_0}\right)=F^{(1)}\left(z;\mytop{a}{\sigma_0}\right)
    -2\pi\im \expe^{\sigma_0 z}z^{a-1}.
\end{equation}

Note that on the Stokes curve this residue term is comparable in size with the smallest term in \eqref{smallest}. 
Thus taking $N\approx \left|\sigma_0 z\right|$ in \eqref{F1asympt},
the remainder is expected to be of the same size as $2\pi\im \expe^{\sigma_0 z}z^{a-1}$. 

Berry \cite{Berry89} first showed formally that the optimal remainder will definitely switch on this exponentially small term in a smooth and universal manner. For the sake of completeness, and to contrast with the main result of this paper, we provide a rigorous general justification in Theorem \ref{normalStokes} (see \S\ref{normalStokessection}). A simplified version of that result gives
\begin{equation}\label{localStokes1}
    z^{-N}F^{(1)}\left(z;\mytop{a+N}{\sigma_0}\right)= \pi\im \expe^{\sigma_0 z}z^{a-1}
\left(\erfc\left(\alpha_0(z)\sqrt{\tfrac{1}{2}N_0}\right)+\bigO{\left(z^{-1/2}\right)}\right)
\end{equation}
as $z\to\infty$, in which $\frac12\alpha_0^2(z)=1+\frac{\sigma_0z}{N_0}+\ln\left(\expe^{-\pi \im}\frac{\sigma_0z}{N_0}\right)$, with $N_0=N+a-1$.

\begin{figure}[ht]
\centering\includegraphics[width=0.6\textwidth]{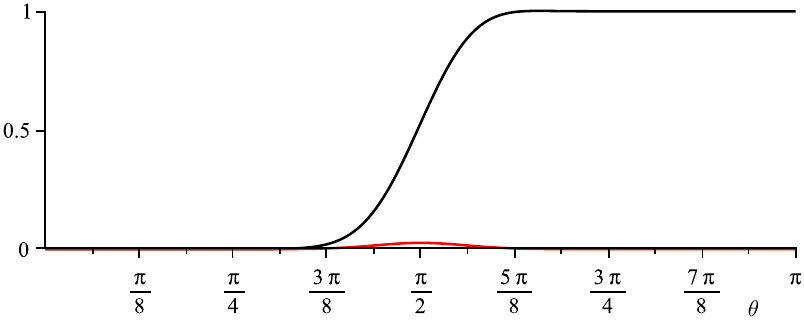}%
\caption{The smoothing of the ordinary Stokes phenomenon according to \eqref{localStokes1}.  The black curve represents $\left|\frac{\expe^{-\sigma_0 z}}{2\pi\im z^{N_0}}
F^{(1)}\left(z;\mytop{N_0+1}{\sigma_0}\right)\right|$, while the red curve shows 
$\left|\frac{\expe^{-\sigma_0 z}}{2\pi\im z^{N_0}}F^{(1)}\left(z;\mytop{N_0+1}{\sigma_0}\right)-
\frac12\erfc\left(\alpha_0(z)\sqrt{\tfrac{1}{2}N_0}\right)\right|$, a measure of the error of the approximation, for $\sigma_0=\im\sqrt2$, $N_0=30.3$ and $z=20\expe^{\im\theta}$.}
\label{figNoremalStokes}
\end{figure}

For the class of divergent series possessing a hyperasymptotic expansion of the form \eqref{hyperexpress} the error function smoothing in \eqref{localStokes1} is the local, smooth and universal  description of the Stokes phenomenon. 

Of course, in exponentially improved asymptotics, if we were to naively truncate the original divergent series after an optimal number of terms and re-expand the remainder in terms of the first hyperterminants, this re-expansion will itself in general be divergent due to the presence of distant Borel-plane singularities $k_2$ on the mutual Riemann sheet. This divergent series will also incorporate its own Stokes phenomenon. 

In order to control that re-expansion we use the template (\ref{hyperexpress}) with the second re-expansion in terms of contributions from the $k_2$ set of Borel-plane singularities on the same Riemann sheet.  

The second hyperterminants $F^{(2)}$ in (\ref{hyperexpress}) take the form
\begin{equation}\label{F2}
F^{(2)}\left(z;\mytop{N_0+1,}{\sigma_0,}\mytop{N_1+1}{\sigma_1}\right)=\int_0^{[\pi-\arg\sigma_0]}
\int_0^{[\pi-\arg\sigma_1]}\frac{\expe^{\sigma_0\tau_0+\sigma_1\tau_1}\tau_0^{N_0}\tau_1^{N_1}}
{(z-\tau_0)(\tau_0-\tau_1)}\id\tau_1\id\tau_0,
\end{equation}
where $\Re(N_0)>-1$ and $\Re(N_1)>-1$. Again the principal branch is $\left|\arg(\sigma_0 z)\right|<\pi$,
and normally we will also assume that $\arg\sigma_1-\arg\sigma_0\in [0,2\pi)$. 

These integrals each contain poles when $z=\tau_0$, $\tau_0=\tau_1$. Just as for $F^{(1)}$ \eqref{F1}, as the parameters $z$ and $\arg \sigma_0$ and $\arg\sigma_1$ change, these poles may snag on the contours of integration of $F^{(2)}$.  They too may thus give rise to the birth or death of additional exponentially small contributions, so generating ``higher-order'' Stokes phenomena.

Note that these higher-order contributions are also exponentially small, but as we shall see, may interfere with the terms that are switched on by the first hyperterminants $F^{(1)}$. As such they can affect the strength of the Stokes multiplier of the ordinary Stokes phenomenon, and in some cases, exactly cancel it out.

More generally, the hyperterminants (\ref{F1}), (\ref{F2}) are the first of a family of hyperterminants $F^{(n)}$, each of $n$ integrations, but with similar, repeated integrands, to these two.  Each of these hyperterminants can possess a higher-order Stokes phenomenon. However, to answer the main questions of this paper will only need to discuss the first two such hyperterminants.

We have now arrived at the main purpose of the paper. While the birth of these exponentially small additional contributions might seem discontinuous at first glance, it is actually continuous, much like the ordinary Stokes phenomenon. In what follows, we establish that this is indeed the case and determine the (novel) form of this smoothed switching on or off of these doubly exponentially small contributions.

The next section provides a summary of these results.

\section{Summary of main results}\label{mainresults}

The higher-order Stokes phenomenon is conveniently captured and explained by the second hyperterminant function defined in \eqref{F2}. An analysis of the two poles in integral representation \eqref{F2} indicates that the second hyperterminant function can
have two types of higher-order Stokes phenomena, one when $\arg(\sigma_0 z)=\pi$, and one when $\arg\sigma_0=\arg\sigma_1$.  There is also a degenerate case where $\arg(\sigma_0z)=\arg(\sigma_1z)=\pi$ leading to two simultaneous poles in the integrands of $F^{(2)}$, which generates a double higher-order Stokes phenomenon. 
These scenarios are illustrated in Figure \ref{fig3}.

\begin{figure}[ht]
\centering\includegraphics[width=0.8 \textwidth]{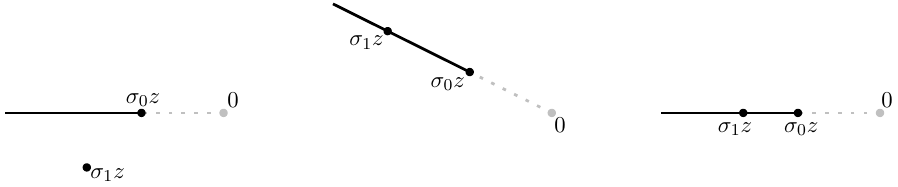}%
\caption{The three cases 
giving rise to higher-order Stokes phenomena in \eqref{F2}. On the left diagram, the case $\arg(\sigma_0 z)=\pi$ generates a pole in \eqref{F2} that leads to a (higher-order) Stokes phenomenon with connection formula \eqref{connect2}.  In the middle diagram, where $\arg\sigma_0=\arg\sigma_1$, a pole occurs in a $F^{(2)}$ hyperterminant and leads to a (higher-order) Stokes phenomenon with connection formula \eqref{connect3}.  In the right-hand diagram, where $\arg(\sigma_0 z)=\arg(\sigma_1 z)=\pi$ two poles simultaneously occur in a $F^{(2)}$ hyperterminant, leading to a combined (higher-order) Stokes phenomenon with the uniform approximation \eqref{simpledouble}.}
\label{fig3}
\end{figure}

The corresponding connection formulae ({\it cf.} (\ref{connect1})) for the first two cases are, for $\arg(\sigma_0 z)=\pi$, 
\begin{equation}\label{connect2}
F^{(2)}\left(z\expe^{-2\pi\im};\mytop{N_0+1,}{\sigma_0,}\mytop{N_1+1}{\sigma_1}\right)=
F^{(2)}\left(z;\mytop{N_0+1,}{\sigma_0,}\mytop{N_1+1}{\sigma_1}\right)
 -2\pi\im \expe^{\sigma_0 z}z^{N_0}F^{(1)}\left(z;\mytop{N_1+1}{\sigma_1}\right),
\end{equation}
and for $\arg\sigma_0=\arg\sigma_1$,
\begin{equation}\label{connect3}
F^{(2)}\left(z;\mytop{N_0+1,}{\sigma_0,}\mytop{N_1+1}{\sigma_1\expe^{-2\pi\im}}\right)=
F^{(2)}\left(z;\mytop{N_0+1,}{\sigma_0,}\mytop{N_1+1}{\sigma_1}\right)
 -2\pi\im F^{(1)}\left(z;\mytop{N_0+N_1+1}{\sigma_0+\sigma_1}\right).
\end{equation}
Both these connection formulae involve the generation of $F^{(1)}$ hyperterminants, as opposed to the functions $T^{(j)}(z;{\bf a})$ \eqref{ordStokes}, or exponentially small terms associated with an ordinary Stokes phenomenon \eqref{connect1}, hence the ``higher-order'' nomenclature.  

Note that it is possible for both connections to occur simultaneously.  In such situations, the order in which the individual connections occur has to be chosen. The choice has no overall consequence for the final result, just in the definition of the second hyperterminant.  We consider this degenerate case below.

In addition to connection formulae \eqref{connect2} and \eqref{connect3} there is one more simple
identity that will be useful below:
\begin{equation}\label{flip}
F^{(2)}\left(z;\mytop{N_0+1,}{\sigma_0,}\mytop{N_1+1}{\sigma_1}\right)+
F^{(2)}\left(z;\mytop{N_1+1,}{\sigma_1,}\mytop{N_0+1}{\sigma_0}\right)=
F^{(1)}\left(z;\mytop{N_0+1}{\sigma_0}\right)F^{(1)}\left(z;\mytop{N_1+1}{\sigma_1}\right),
\end{equation}
which may be derived from (\ref{F2}) using partial fractions \cite{OD09a}.

So far these are discrete results, bridging between either side of a higher-order 
Stokes phenomenon. We now present a list of the main results, demonstrating that the transition between these states is smooth and illustrating the forms these cases take.

\subsection{The smoothing when $\arg(\sigma_0 z)=\pi$}

In \S\ref{higherStokes1} below, we will address the $\arg(\sigma_0 z)=\pi$ (higher-order) Stokes phenomenon, assuming $\arg\sigma_0\neq \arg\sigma_1$ (left-hand diagram of Figure \ref{fig3}). 

Identity \eqref{flip} will prove to be quite useful in this case. A simplified version of the main result in this case \eqref{Stokes2aa} is
\begin{equation}\label{Stokes2a}
F^{(2)}\left(z;\mytop{N_0+1,}{\sigma_0,}\mytop{N_1+1}{\sigma_1}\right)\sim 
\pi\im \expe^{\sigma_0 z} z^{N_0}
\erfc\left(\alpha_0(z)\sqrt{\tfrac{1}{2}N_0}\right)F^{(1)}\left(z;\mytop{N_1+1}{\sigma_1}\right).
\end{equation}
A computation of both sides is illustrated in Figure \ref{figSimpleF2Stokes}, thereby confirming numerically the smooth switching on of the term
$2\pi\im \expe^{\sigma_0 z} z^{N_0}F^{(1)}\left(z;\mytop{N_1+1}{\sigma_1}\right)$ in \eqref{connect2} 
again via a complementary error function, {\it cf.} \eqref{localStokes1}.

This approximation breaks down 
when $\tfrac{\sigma_0N_1}{\sigma_1N_0}$ approaches the positive real axis.

\begin{figure}[ht]
\centering\includegraphics[width=0.6\textwidth]{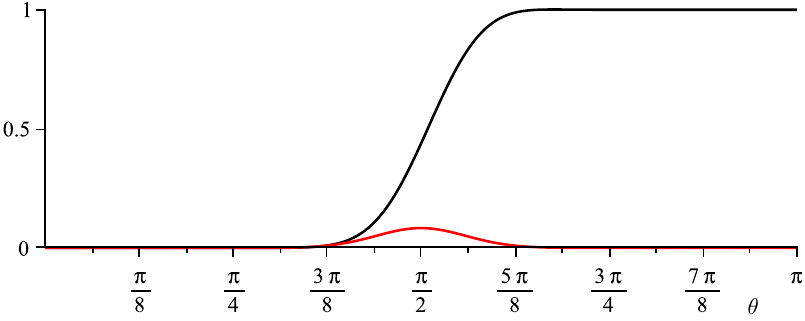}%
\caption{An example of the smoothing when $\arg(\sigma_0 z)=\pi$, based on \eqref{Stokes2a}.  The black curve depicts $\left|\frac{\expe^{-\sigma_0 z}}{2\pi\im z^{N_0}}
\frac{F^{(2)}\left(z;\mytop{N_0+1,}{\sigma_0,}\mytop{N_1+1}{\sigma_1}\right)}{%
F^{(1)}\left(z;\mytop{N_1+1}{\sigma_1}\right)}\right|$, and the red curve shows 
$\left|\frac{\expe^{-\sigma_0 z}}{2\pi\im z^{N_0}}
\frac{F^{(2)}\left(z;\mytop{N_0+1,}{\sigma_0,}\mytop{N_1+1}{\sigma_1}\right)}{%
F^{(1)}\left(z;\mytop{N_1+1}{\sigma_1}\right)}-
\frac12\erfc\left(\alpha_0(z)\sqrt{\tfrac{1}{2}N_0}\right)\right|$, a measure of the error of the approximation, for indicative values $\sigma_0=\im\sqrt2$, $N_0=30.3$ , 
$\sigma_1=\im -1$, $N_1=29$ and $z=20\expe^{\im\theta}$.}
\label{figSimpleF2Stokes}
\end{figure}

\subsection{The smoothing across $\arg\sigma_0=\arg\sigma_1$ when $\arg(\sigma_0 z)\not=\pi$} In \S\ref{sigma10}, we discuss the $\arg\sigma_0\approx\arg\sigma_1$ (higher-order) Stokes phenomenon, assuming that $\arg(\sigma_0 z)\not=\pi$.
A simplified version of the main result \eqref{sigma1int3} is
\begin{equation}\label{simplesigma1int3}
    F^{(2)}\left(z;\mytop{N_0+1,}{\sigma_0,}\mytop{N_1+1}{\sigma_1}\right)\sim 
    -\pi\im\erfc\left(\gamma\big(\tfrac{\sigma_1}{\sigma_0}\big)\sqrt{\tfrac{1}{2}N_1}\right)
    F^{(1)}\left(z;\mytop{N_0+N_1+1}{\sigma_0+\sigma_1}\right),
\end{equation}
with $\gamma\big(\frac{\sigma_1}{\sigma_0}\big)$ defined in \eqref{gamma}.  This demonstrates how the term 
$2\pi\im F^{(1)}\left(z;\mytop{N_0+N_1+1}{\sigma_0+\sigma_1}\right)$ is also switched on smoothly in 
\eqref{connect3}, again via a complementary error function. An example of the accuracy and smoothness of this result is illustrated in Figure \ref{figHigher1F2Stokes}.

In fact, this is the simplest version of the higher-order Stokes phenomenon. In a typical situation, the $F^{(1)}\left(z;\mytop{N_0+N_1+1}{\sigma_0+\sigma_1}\right)$ are linked to doubly exponentially small terms. Through this process, the coefficient of this $F^{(1)}$ hyperterminant undergoes changes, potentially resulting in the hyperterminant being switched off or on, \cite{HLO04}.

Note that the smoothing for this type of higher-order Stokes phenomenon originates from the representation of $F^{(2)}$ in terms of a ${}_2F_1$ hypergeometric function, as its argument $x=1+\frac{\sigma_1}{\sigma_0}$ traverses its branch-cut $x>1$. Compare equations \eqref{sigma1int2}, \eqref{Ibeta}, Theorem \ref{hypergeom} and \cite[\href{http://dlmf.nist.gov/15.2.E3}{Eq. (15.2.3)}]{NIST:DLMF}.

\begin{figure}[ht]
\centering\includegraphics[width=0.6\textwidth]{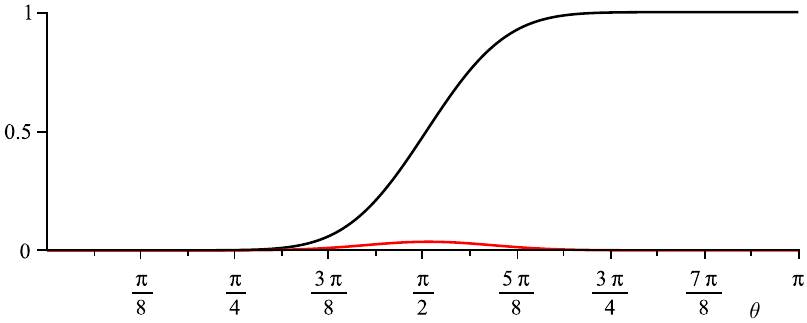}%
\caption{An example of the smoothing across $\arg(\sigma_0)=\arg(\sigma_1)$, $\arg(\sigma_0 z)\not=\pi$, based on \eqref{simplesigma1int3}. The black curve represents $\left|%
\frac{F^{(2)}\left(z;\mytop{N_0+1,}{\sigma_0,}\mytop{N_1+1}{\sigma_1}\right)}{%
2\pi\im F^{(1)}\left(z;\mytop{N_0+N_1+1}{\sigma_0+\sigma_1}\right)}\right|$, while the red curve shows 
$\left|\frac{F^{(2)}\left(z;\mytop{N_0+1,}{\sigma_0,}\mytop{N_1+1}{\sigma_1}\right)}{%
2\pi\im F^{(1)}\left(z;\mytop{N_0+N_1+1}{\sigma_0+\sigma_1}\right)}+
\frac12\erfc\left(\gamma\big(\tfrac{\sigma_1}{\sigma_0}\big)\sqrt{\tfrac{1}{2}N_1}\right)\right|$, 
a measure of the error of the approximation, for indicative values $\sigma_0=1.5\expe^{\im\theta}$, $N_0=30.3$, 
$\sigma_1=\im\sqrt2$, $N_1=29$ and $z=20$.}
\label{figHigher1F2Stokes}
\end{figure}

\subsection{Uniform approximation covering all three types of cases} To deal with the exceptional situation above where $\arg\sigma_0=\arg\sigma_1$ and also $\arg(\sigma_0z)=\pi$, so encompassing both poles in $F^{(2)}$ and thus both types of higher-order Stokes phenomenon simultaneously 
(see the right-hand, combined, case in Figure \ref{fig3}), it is necessary to obtain a 
uniform asymptotic approximation for the second hyperterminant function covering the smoothing of all the above three cases of the higher-order Stokes phenomenon.  This is achieved in \S\ref{sectcoll} and is given in Theorem \ref{simpledoublesmoothingthm0}.  A simplified version is 
\begin{gather}\label{simpledouble}\begin{split}
& F^{(2)}\left(z;\mytop{N_0+1,}{\sigma_0,}\mytop{N_1+1}{\sigma_1}\right)
\\ & \sim 
-\pi^2\Erfc\left(d_1\alpha_0(z)\sqrt{\tfrac{1}{2}N_1};d_1\alpha_0(\zeta_1)\sqrt{\tfrac{1}{2}N_1};
d_1^{-1}\sqrt{\frac{N_0}{N_1}}\right)\expe^{\left(\sigma_0+\sigma_1\right)z}z^{N_0+N_1}\\
& -\frac{\left(\sigma_0+\sigma_1\right)^{N_0+N_1+1}\Gamma(N_0+1)\Gamma(N_1+1)}{\sigma_0^{N_0+1}
\sigma_1^{N_1}\Gamma(N_0+N_1+2)}
\genhyperF{2}{1}{1}{N_0+1}{N_0+N_1+2}{1+\frac{\sigma_1}{\sigma_0}}
F^{(1)}\left(z;\mytop{N_0+N_1+1}{\sigma_0+\sigma_1}\right),
\end{split}\end{gather}
as $z\to\infty$, with $\sigma_0,~\sigma_1$, $\frac{\sigma_0 z}{N_0}$ and $\frac{\sigma_1 z}{N_1}$ 
and their reciprocals being bounded, and with
\begin{equation*}
\tfrac12\alpha_0^2(z)=1+\frac{\sigma_0 z}{N_0}+\ln\left( \expe^{ - \pi \im} \frac{ \sigma_0 z}{N_0}\right), \quad  d_1=\frac{\im \alpha_0(\zeta_1)}{1-\frac{\sigma_0 N_1}{\sigma_1 N_0}}, \quad  \zeta_1=\expe^{\pi \im} \frac{N_1}{\sigma_1}.
\end{equation*}

The function $\Erfc(x;y;\lambda)$ introduced in \eqref{simpledouble} is a {\it new} special function describing the universal smoothing and is defined by
\begin{equation}\label{newserfc0}
\Erfc(x;y;\lambda)=\frac{2}{\sqrt{\pi}}\int_{x}^\infty \expe^{-\left(\tau-y\right)^2}
\erfc\left(\lambda \tau\right)\id \tau.
\end{equation}
The function \eqref{newserfc0} is a generalisation of the previously found complementary error function, which governs the ordinary Stokes phenomenon.  It is effectively a Gaussian convolution of such an error function.
In Appendix \ref{Newerfc}, we provide a comprehensive list of properties associated with this special function, including the useful simplifications $\Erfc(x;y;0)=\erfc(x-y)$ and $\Erfc(x;0;1)=\tfrac12\erfc^2(x)$.

\begin{figure}[ht]
\centering\includegraphics[width=0.6\textwidth]{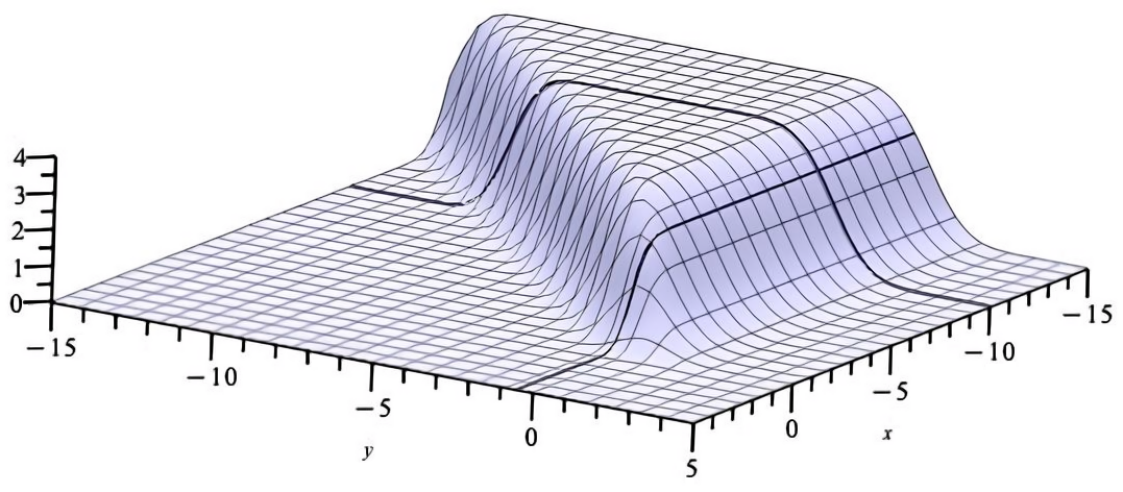}%
\caption{The function $\Erfc(x;y;1.2)$, and the curves $\Erfc(x;-0.5;1.2)$ and $\Erfc(-10.1;y;1.2)$.}
\label{newerror0}
\end{figure}
In the new approximant $\Erfc(x;y;\lambda)$, the variable $x$ is associated with the $z\leftrightarrow \sigma_0$ higher-order Stokes phenomenon, while the variable $y$ is linked to the $\sigma_0\leftrightarrow \sigma_1$ higher-order Stokes phenomenon. As shown in Figure \ref{newerror0}, for fixed $y$ and $\lambda$, the curve $x\mapsto \Erfc(x;y;\lambda)$ exhibits the characteristic shape of an error function: exponentially decreasing as $x\to+\infty$, with its limit as $x\to-\infty$ being $2\erfc\left(\frac{\lambda y}{\sqrt{\lambda^2+1}}\right)$, as seen in \eqref{reflection}. On the other hand, for fixed $x$ and $\lambda$, the curve $y\mapsto \Erfc(x;y;\lambda)$ decreases exponentially as $y\to\pm\infty$.

\subsection{The smoothing across $\arg\sigma_0=\arg\sigma_1$ when $\arg(\sigma_0 z)=\pi$}
Using the uniform result \eqref{simpledouble} it is now possible to consider the case when $\arg\sigma_0=\arg\sigma_1$ when $\arg(\sigma_0 z)=\pi$ and both of these higher-order Stokes phenomena occur simultaneously.  This leads to perhaps the most interesting result of all.  

We fix $\sigma_0$ and $\sigma_1$ such that $\arg\sigma_0\approx\arg\sigma_1$, and allow $z$ to cross the line
$\arg(\sigma_0 z)=\pi$, both $\Erfc(x;y;\lambda)$ and the $F^{(1)}$ in \eqref{simpledouble} exhibit error function behaviour.  One function is switching on the exponentially
small term $\expe^{(\sigma_0+\sigma_1)z}z^{N_0+N_1}$, while the other simultaneously is switching it off. 

Despite no net discrete switching on of terms occurring away from the Stokes line, the exponentially small term remains influential on the Stokes curve itself.  There is a smooth and ``ghost-like'' transient presence of the exponentially small term near to the Stokes line as illustrated in Figure \ref{figHigherzF2Stokes}.  The value of the smooth multiplier of this ``ghost'' exactly on the Stokes curve $\arg(\sigma_0z)=\pi$ is given by
\begin{equation}\label{ourarctan}
    2\pi\arctan\sqrt{\frac{N_0}{N_1}}+\bigO{\left(z^{-1/2}\right)},
\end{equation}
as detailed in Corollary \ref{DoubleStokesMultiplier}.

\begin{figure}[ht]
\centering\includegraphics[width=0.5\textwidth]{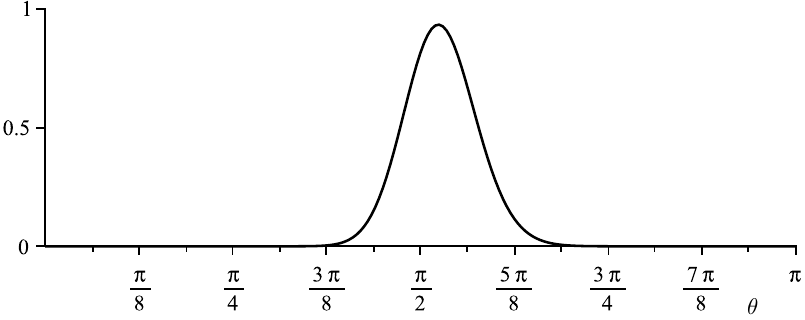}%
\caption{An example of the ``ghost-like" transient presence of an exponentially small term near to the Stokes line arising from the interference of terms in \eqref{simpledouble}.  The black curve shows $\left|\frac{\expe^{-(\sigma_0+\sigma_1) z}}{2\pi\im z^{N_0+N_1}}
F^{(2)}\left(z;\mytop{N_0+1,}{\sigma_0,}\mytop{N_1+1}{\sigma_1}\right)\right|$
for $\sigma_0= 1.5\im+0.000001$, $N_0=30.3$, 
$\sigma_1=\im\sqrt2$, $N_1=29$ and $z=20\expe^{\im\theta}$. Note that the shape of this curve might suggest that it arises from parameters near to tip of the wedge in Figure \ref{newerror0}.  This is not the case. It is here arising from the combination of terms in \eqref{simpledouble}.}
\label{figHigherzF2Stokes}
\end{figure}

In the extreme case of $\sigma_0=\sigma_1$ and $N_0=N_1$, the uniform approximation 
\eqref{simpledouble} simplifies to
\begin{align*}
      \lim_{\eps\to 0^+}F^{(2)}\left(z;\mytop{N+1,}{\sigma\expe^{-\eps\im},}\mytop{N+1}{\sigma}\right)
        &\sim-\pi^2\Erfc\left(\alpha_0(z)\sqrt{\tfrac{1}{2}N};0;1\right)\expe^{2\sigma z}z^{2N}
      -\pi\im F^{(1)}\left(z;\mytop{2N+1}{2\sigma}\right)\\
      &=-\tfrac12\pi^2\erfc^2\left(\alpha_0(z)\sqrt{\tfrac{1}{2}N}\right)\expe^{2\sigma z}z^{2N}
      -\pi\im F^{(1)}\left(z;\mytop{2N+1}{2\sigma}\right)\\
      &\sim\tfrac12\left(F^{(1)}\left(z;\mytop{N+1}{\sigma}\right)\right)^2
      -\pi\im F^{(1)}\left(z;\mytop{2N+1}{2\sigma}\right),
    \end{align*}
in which we have used the identity $\Erfc(x;0;1)=\tfrac12\erfc^2(x)$, see \eqref{special}, and
the normal Stokes phenomenon approximation \eqref{localStokes1}. We remark that, in fact, the final expression is also equal to the limit on the left-hand side (see \cite[Eq. (4.4)]{Nemes2022}).

The case where the points $0$, $\sigma_0$, and $\sigma_1$ align may initially seem exceptional, but it is actually rather common in practice. This occurs, among other instances, when dealing with nonlinear ODEs or when considering real-valued oscillatory solutions of PDEs. We provide examples of these cases in \S\ref{GammaEx}--\ref{ODEapplication}.

It is worth noting that in our original definition \eqref{F2}, we assumed that $\arg\sigma_1-\arg\sigma_0\in (0,2\pi)$, gradually approaching the case where $\arg\sigma_1-\arg\sigma_0=0$ from the positive side. Had we chosen to approach it from the negative side, as per connection formula \eqref{connect3}, we would have omitted the first term on the right-hand side of \eqref{simpledouble}. This would result in the double higher-order Stokes phenomenon being a simple activation of the exponentially small term $\expe^{(\sigma_0+\sigma_1)z}z^{N_0+N_1}$. Compare with identity \eqref{gamma5}.

\subsection{List of applications of the main results}
Having summarised the main results of the paper the next four sections contain examples of the types of smoothing of the simultaneous higher-order Stokes phenomena.  

The first three examples, \S\ref{GammaEx}--\ref{telegraphexample}, deal with the effect of the smoothed higher-order Stokes phenomena on the asymptotics of a progenitor function.  

The first two examples \S\ref{GammaEx}--\ref{ODEapplication} deal with the higher-order Stokes phenomena arising from a variation in the asymptotic parameter $z$ for fixed $\sigma_i$ exponent factors and in the presence of an infinite number of Borel-plane singularities. In \S\ref{GammaEx} the associated symmetries lead to a simplification in the limit of the $F^{(2)}$ hyperterminant on the higher-order Stokes line. 

In \S\ref{telegraphexample}, we consider the linear telegraph PDE system with only three Borel plane singularities.  Unlike in \S\ref{GammaEx}--\ref{ODEapplication}, the $\sigma_i$ now move relative to one another as functions of the variables of the PDE system.

All these examples exhibit behaviour that gives rise to a ``ghost-like'' smooth hump appearance of the higher-order Stokes phenomenon in the vicinity of the higher-order Stokes lines with a multiplier of the form \eqref{ourarctan} on the Stokes line.  The cause of this apparition varies between examples.  In some cases it arises directly from the behaviour of a single hyperterminant, in others it is the result of cancellations between hyperterminants of different orders.

In \S\ref{termsmoothingexample}, we demonstrate the corresponding effect of the smoothing of the higher-order Stokes phenomenon on the re-expansion of the individual late terms in the asymptotic expansion, and thereby provide a rigorous derivation of the formal ideas found in Shelton {\it et al.} \cite{Shelton2023}.  
 
\section{Application: The gamma function and its reciprocal} \label{GammaEx}

The presence of a fixed regularly spaced collinear Borel-plane singularities is common in nonlinear ODEs, or systems involving periodic orbits.  As is well demonstrated in \cite{Aniceto2019}, this conjunction gives rise to an infinite sequence of simultaneously occurring Stokes phenomena.  Contributions from the most dominant Borel-plane singularity switch on an infinite number of subdominant contributions. The first subdominant contribution itself may simultaneously switch on an infinite number of sub-subdominant contributions.  Each one of these birthing sub-subdominant contributions may also simultaneously switch on sub-sub-subdominant contributions etc.  

If the subdominant contributions are all equally spaced in the Borel plane, this may give rise to constructive interference, with the Stokes multiplier for an overall individual subdominant contribution being the sum of Stokes multipliers from all the Stokes phenomena that gave it birth. 

In this section we demonstrate the higher-order smoothing on the gamma function and its reciprocal.  This is an example of the simultaneous occurrence of two higher-order Stokes phenomena when $\arg(\sigma_0z)=\arg(\sigma_1z)=\pi$, the right-hand case of Figure \ref{fig3}.  In this case however, rather than just three singularities, there is an infinite collinear array of equally spaced and fixed Borel-plane singularities. Hence this is an example of where the higher-order Stokes phenomenon is being driven by changes in the asymptotic parameter $z$, with fixed $\sigma_i$.  

As we will illustrate below and as discussed in \cite{Nemes2022}, these cases offer additional identities for the hyperterminants. Here, it transpires that the smoothing of the higher-order Stokes phenomenon can be described using the standard $\erfc$ function, exceptionally in this example due to symmetrical simplifications arising from the regular singularity spacing in a single infinite array in the Borel plane. 

In our demonstration, we will study the gamma function and its reciprocal and utilise the truncated asymptotic expansions as given in \cite{Nemes2022}:
\begin{equation}\label{twogammas}
z^{-z} \expe^{z} \sqrt{\frac{z}{2\pi}}\Gamma(z)= \sum_{n=0}^{N-1} (-1)^n\frac{\gamma_n}{z^n}+R_N^{(1)}(z),\qquad
z^z \expe^{-z} \sqrt{\frac{2\pi}{z}}\frac{1}{\Gamma(z)}= \sum_{n=0}^{N-1} \frac{\gamma_n}{z^n}+R_N^{(2)}(z).
\end{equation}
Here, the $\gamma_n$ are known as the Stirling coefficients, with the first few values being
\[
\gamma _0  = 1,\quad\gamma _1  =  - \frac{1}{12},\quad\gamma _2  = \frac{1}{288},\quad\gamma _3  = \frac{139}{51840},\quad\gamma _4  =  - \frac{571}{2488320}.
\]
The optimal number of terms has $N \approx 2\pi |z|$. Expressing $z = |z|\expe^{\im\theta}$, the plots of $\left|\expe^{-2\pi \im z}R_N^{(j)}(z)\right|$ are depicted in Figure \ref{fig1}. Note that \(\Gamma(z)\) has poles along \(\theta = \pi\), causing the red curves to flip upward near \(\theta = \pi\) in Figures \ref{fig1} and \ref{fig2}.

\begin{figure}[ht]
\centering\includegraphics[width=0.5\textwidth]{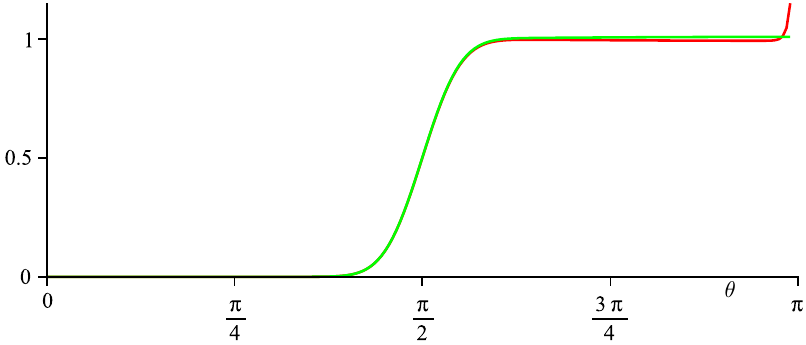}%
\caption{The graphs show the remainders of the truncated expansions \eqref{twogammas} $\left|\expe^{-2\pi \im z}R_N^{(j)}(z)\right|$ with $j=1$ (red) and $j=2$ 
(green) for $N=62$ and $z=10\expe^{\im\theta}$. }
\label{fig1}
\end{figure}

The level 1 hyperasymptotic approximations are
\begin{gather}\begin{split}\label{twogammaslevel1}
z^{-z} \expe^{z} \sqrt{\frac{z}{2\pi}}\Gamma(z)= \sum_{n=0}^{N-1} (-1)^n\frac{\gamma_n}{z^n}&+\frac{z^{1-N}}{2\pi \im}\sum_{n=0}^{N/2-1} (-1)^n\gamma_nF^{(1)}\left(z;\mytop{N-n}{2\pi \im}\right) \\
& -\frac{z^{1-N}}{2\pi \im}\sum_{n=0}^{N/2-1} (-1)^n\gamma_nF^{(1)}\left(z;\mytop{N-n}{ -2\pi \im}\right)+R_N^{(3)}(z) ,  \\
z^z \expe^{-z} \sqrt{\frac{2\pi}{z}}\frac{1}{\Gamma(z)} =\sum_{n=0}^{N-1} \frac{\gamma_n}{z^n}&-\frac{z^{1-N}}{2\pi \im}\sum_{n=0}^{N/2-1} \gamma_nF^{(1)}\left(z;\mytop{N-n}{ 2\pi \im}\right) \\
& +\frac{z^{1-N}}{2\pi \im}\sum_{n=0}^{N/2-1} \gamma_nF^{(1)}\left(z;\mytop{N-n}{ -2\pi \im}\right)+R_N^{(4)}(z) . 
\end{split}\end{gather}
When we maintain the optimal number of terms of the original expansions, $N \approx 2\pi |z|$, it may be deduced by comparing the remainder terms $R^{(1)}_N(z)$, $R^{(2)}_N(z)$ and the $n=0$ hyperterminants in the level 1 expansions that $F^{(1)}\left(z;\mytop{N}{2\pi \im}\right)$ incorporates the familiar error function smoothing, as illustrated in Figure \ref{fig1}.
\begin{figure}[ht]
\centering\includegraphics[width=0.5\textwidth]{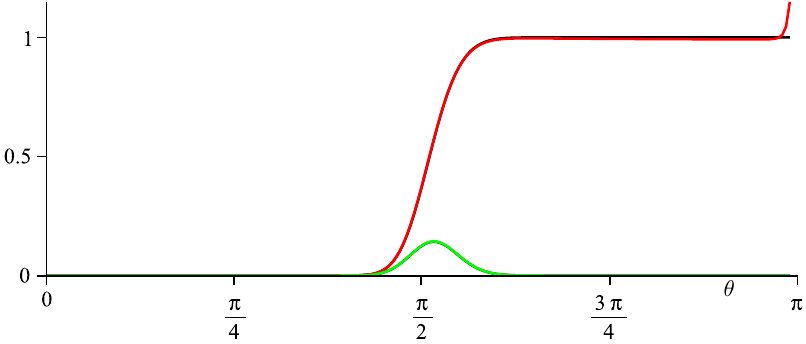}%
\caption{The graphs show the remainders of the level 1 truncated expansions \eqref{twogammaslevel1}, $\left|\expe^{-4\pi \im  z}R_N^{(j)}(z)\right|$ with $j=3$ (red) and $j=4$ 
(green) for $N=125$ and $z=10\expe^{\im\theta}$. The two black curves, depicting $\lim_{\eps\to 0^+}\left|\frac{\expe^{-4\pi \im  z}z^{2-N}}{\left(2\pi\im\right)^2}
F^{(2)}\left(z;\mytop{N/2,\ \ N/2}{2\pi\im\pm\eps,
~2\pi \im}\right)\right|$, are covered by the red and green curves. Compare \eqref{gamma5}.  }
\label{fig2}
\end{figure}

However, when determining the optimal truncation $N$ of the original asymptotic series to minimise the overall error after applying the level 1 hyperasymptotic approximation (see \cite[Theorem 6.2]{OD98b}), it turns out that $N \approx 4\pi |z|$. The associated remainders in the level 1 expressions $R^{(j)}_N(z)$, for $j = 3, 4$, scaled to better reveal the smoothings, are illustrated in Figure \ref{fig2}.

Therefore, for $\Gamma(z)$, terms of order $\expe^{2\pi \im z}$ (Figure \ref{fig1}) and $\expe^{4\pi \im z}$ (Figure \ref{fig2}) are switched on as the positive imaginary axis is crossed. In fact, as the positive imaginary axis is crossed, terms of order $\expe^{2n\pi \im z}$, where $n=1, 2, 3, \ldots$, are switched on. Re-summing this transseries will reveal the poles along the negative $z$-axis. A similar phenomenon occurs along the negative imaginary axis. For further details, refer to \cite{Nemes2022}.

For $1/\Gamma(z)$, no terms of order $\expe^{2n\pi \im z}$, $n=2,3,4,\ldots$, are switched on when the positive imaginary axis is crossed (see Figure \ref{fig2} for the case $n=2$). This is reasonable because nothing notable occurs for this function near the negative  $z$-axis. 

Note that in Figure \ref{fig1}, the value on the Stokes line for both curves is approximately $\frac{1}{2}$, which is expected. However, in Figure \ref{fig2}, the red curve crosses the positive imaginary axis at $\frac{3}{8}$ and the green curve at $\frac{1}{8}$. This indicates that the higher-order Stokes phenomenon is not described merely by a single error function.

The relevant hyperterminants at level 2 are $F^{(1)}\left(z;\mytop{N}{4\pi \im}\right)$, which exhibits the standard error function behaviour, and $F^{(2)}\left(z;\mytop{N/2,~~N/2}{2\pi \im,~2\pi \im}\right)$.

The Stokes phenomena of the general level 2 hyperterminants  
$F^{(2)}\left(z;\mytop{N_1,~N_2}{\sigma_1,~\sigma_2}\right)$
are discussed in $\S$\ref{AllStokessection}.
For the special case in this section, whereby $N_1 = N_2$ and $\sigma_1 = \sigma_2$, we have the simplification
\begin{equation}\label{gamma5}
\lim_{\eps\to 0^+} F^{(2)}\left(z;\mytop{N/2,~N/2}{\sigma\expe^{\pm\eps\im},~\sigma}\right)
=\frac{\left(F^{(1)}\left(z;\mytop{N/2}{\sigma}\right)\right)^2
\pm2\pi \im F^{(1)}\left(z;\mytop{N-1}{2\sigma}\right)}{2}
\end{equation}
(see \cite[Eq. (4.4)]{Nemes2022}). Using \eqref{localStokes1}, in terms of Stokes multipliers, we can interpret the right-hand side of \eqref{gamma5} on the Stokes lines (for a generic local Stokes crossing coordinate $\alpha\sim 0$) as contributing 
\begin{equation*}
\frac{\left(\frac12\erfc\alpha\right)^2\pm\frac12\erfc\alpha}2~~~\longrightarrow~~~\frac{\left(\frac12\right)^2\pm\frac12}2
=\frac38,\; -\frac18.
\end{equation*}
Note that our formula \eqref{ourarctan} for the general multiplier gives us the multiplier 
$\left(2\pi\im\right)^{-2}2\pi\arctan1=-\frac18$. 
From \eqref{gamma5} or \eqref{connect3}, we obtain
\begin{equation*}
\lim_{\eps\to 0^+} \left[F^{(2)}\left(z;\mytop{N/2,~~~N/2}{2\pi \im+\eps,~2\pi \im}\right)-F^{(2)}\left(z;\mytop{N/2,~~~N/2}{2\pi \im-\eps,~2\pi \im}\right)\right]
= 2\pi \im F^{(1)}\left(z;\mytop{N}{4\pi \im}\right).
\end{equation*}
Hence, the green curve in Figure \ref{fig2} can be seen as the difference of the red curves in Figures \ref{fig2} and \ref{fig1}, meaning the two active Stokes phenomena of the red curves cancel each other out. 

One initial interpretation of the green curve in Figure \ref{fig2} suggests that there is no Stokes phenomenon occurring at a sub-subdominant level. This interpretation is accurate since no doubly exponentially small terms are ultimately activated. However, the presence of the factor $\frac{1}{8}$ indicates that doubly exponentially small terms are indeed active precisely on the Stokes line itself! 
 
\section{Application: A second-order nonlinear ODE}\label{ODEapplication}

In this example we consider the situation in which the higher-order Stokes phenomenon is again being driven by changes in the asymptotic variable $z$, with fixed $\sigma_i$ but where the exponents (or, equivalently, Borel-plane singularities $\sigma_i$) are not equally spaced.  For the nonlinear ODE \eqref{ODE}, each of these exponents $\sigma_i z$ individually generates an infinite array of subdominant exponents $n\sigma_i z$, $n\in \mathbb{Z}$ (and collectively, a multidimensional array $n\sigma_i z+ m\sigma_j z+p\sigma_k z$, $i\ne j\ne k, n,m, p\in \mathbb{Z}$).  

Here the simplifications of the previous example no longer hold, yet we still observe similar ultimate results in the contribution from the smoothed higher-order Stokes phenomenon arising from $F^{(2)}$ being a ``ghost-like" hump in the smoothing across the higher-order Stokes line, {\it cf.} \eqref{simpledouble}.

We shall analyse the large-$z$ asymptotics of the second-order nonlinear ODE
\begin{equation}\label{ODE}
    u''(z)+(1+\sqrt2)u'(z)-\left(2+\tfrac32\sqrt{2}\right)u(z)u'(z)-\tfrac74\sqrt{2}u^2(z)
    +\sqrt2 u(z)+\frac1z=0.
\end{equation}
This equation has been chosen to yield simple yet unequally spaced exponents: $0, -z$ and $-\sqrt{2}z$.
The balance between the final two terms leads us to a formal solution of the form
\begin{equation}\label{ODEasympt0}
    u(z)\sim\sum_{n=0}^\infty \frac{a_n}{z^{n+1}},
\end{equation}
as $z\to\infty$, with $a_0=-1/\sqrt2$. Subsequently, the remaining coefficients satisfy the recurrence relation 
\begin{align*}
    \sqrt2 a_n=\left(1+\sqrt2\right)na_{n-1}&+\tfrac74\sqrt{2}\sum_{k=0}^{n-1}a_k a_{n-1-k}
    -n(n-1)a_{n-2}\\
    &-\left(1+\tfrac34\sqrt{2}\right)n\sum_{k=0}^{n-2}a_k a_{n-2-k},
    \end{align*}
for $n=1,2,3,\ldots\,\,$.

Thus far, our solution lacks any free constants. To incorporate terms representing exponentially small contributions, we introduce $u(z) = u_0(z) + u_1(z) + u_2(z) + \ldots$, where $u_0(z)$ is the Borel--Laplace transform of \eqref{ODEasympt0}, $u_1(z)$ is exponentially small, $u_2(z)$ is doubly exponentially small, and so forth. For $u_1(z)$, we derive the linear ODE
\begin{equation*}
    u_1''(z)+\left(1+\sqrt2-\left(2+\tfrac32\sqrt{3}\right)u_0(z)\right)u_1'(z)
    +\left( \sqrt2-\tfrac72\sqrt{2}u_0(z)-\left(2+\tfrac32\sqrt{2}\right)u_0'(z)\right)u_1(z)=0.
\end{equation*}
It is easy to check that $u_1(z)\sim C_1\expe^{-z}z^{-\sqrt2}+C_2\expe^{-\sqrt2 z}z^{-\frac32}$.
Solving the subsequent equations for $u_k(z)$, $k\ge 2$ and formally summing them, ultimately, we arrive at the transseries solution
\begin{equation}\label{ODEasympt1}
    u(z)\sim\sum_{k=0}^\infty\sum_{m=0}^\infty\left(C_1\expe^{-z}z^{-\sqrt2}\right)^k 
    \left(C_2\expe^{-\sqrt2 z}z^{-\frac32}\right)^m
    \sum_{n=0}^\infty \frac{a_{k,m,n}}{z^n},
\end{equation}
where $a_{0,0,0}=0$, $a_{0,0,n}=a_{n-1}$ for $n=1,2,3,\ldots$, and the initial recurrence relations are
\begin{align*}
    n(\sqrt2 -1)a_{1,0,n}=&\left(n+\sqrt2\right)\left(n-1-\sqrt2\right)a_{1,0,n-1}
    +\left(2-2\sqrt2\right)\sum_{k=0}^{n-1}a_{0,0,n+1-k}a_{1,0,k}\\
    &+\left(\left(2+\tfrac32\sqrt{2}\right)n+3+2\sqrt{2}\right)\sum_{k=0}^{n-1}a_{0,0,n-k}a_{1,0,k},
    \end{align*}
\begin{align*}
    n(1-\sqrt2)a_{0,1,n}=&\left(n+\tfrac12\right)\left(n+\tfrac32\right)a_{0,1,n-1}
    +\left(3-\tfrac32\sqrt{3}\right)\sum_{k=0}^{n-1}a_{0,0,n+1-k}a_{0,1,k}\\
    &+\left(\left(2+\tfrac32\sqrt{2}\right)n+3+\tfrac94\sqrt{2}\right)\sum_{k=0}^{n-1}a_{0,0,n-k}a_{0,1,k},
    \end{align*}
for $n=1,2,3,\ldots\,\,$. Note that $a_{1,0,0}$ and $a_{0,1,0}$ are free from any constraints. Setting $a_{1,0,0}=a_{0,1,0}=1$ without loss of generality, we allocate the freedom to $C_1$ and $C_2$, the sole free constants in the complete transseries expansions.

To compute the first three Stokes multipliers $K_{ij}$, we employ the late coefficient asymptotics, as detailed in \cite{OD98b}. As $N_0\to+\infty$, we have the approximation
\begin{multline*}
   a_{N_0} \sim\frac{K_{01}}{2\pi\im}\sum_{n=0}^{N_1-1}
   \frac{a_{1,0,n}\Gamma(N_0+1-\sqrt{2}-n)}{\left(1\right)^{N_0+1-\sqrt{2}-n}}
   +\frac{K_{02}}{2\pi\im}\sum_{n=0}^{N_2-1}
   \frac{a_{0,1,n}\Gamma(N_0-\frac12-n)}{\left(\sqrt{2}\right)^{N_0-\frac12-n}}\\
     -\frac{K_{01}K_{12}}{\left(2\pi\im\right)^2}\sum_{n=0}^{N_2-1}
   a_{0,1,n}F^{(2)}\left(0;\mytop{N_0-N_1+3-\sqrt{2},}{-1,}
   \mytop{N_1-\frac32+\sqrt{2}-n}{1-\sqrt{2}}\right),
\end{multline*}
 where the optimal numbers of terms are $N_1\approx \frac{2\sqrt{2}-2}{2\sqrt{2}-1}N_0$ and $N_2\approx \frac{\sqrt{2}-1}{2\sqrt{2}-1}N_0$. Note that the ``1" and ``$\sqrt{2}$" in the denominators of the approximation for $a_{N_0}$ are the prefactors of $z$ in the associated (small) exponents in the basis of the solution.
 
 By setting $N_0=100, 102,$ and $104$ (for example), we establish three equations with three unknowns, which can be solved numerically to give:
\begin{align*}
K_{01}&\approx-199.049496506302686684534546\im,\\
   K_{12}&\approx2.61181979\im,\\
   K_{02}&\approx259.940707924-11.132449502\im\\
   &\approx\tfrac12 K_{01}K_{12}-11.132449502\im.
\end{align*}
All digits in these approximations are correct. Note that Stokes multipliers can be
large!

\begin{figure}[ht]
\centering\includegraphics[width=0.4\textwidth]{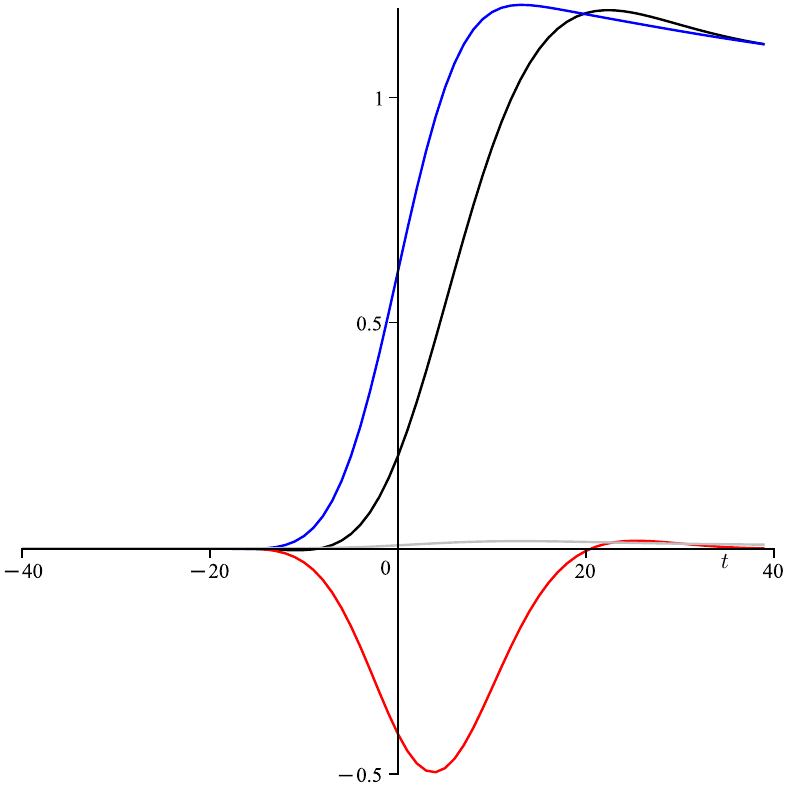}%
\qquad\includegraphics[width=0.4\textwidth]{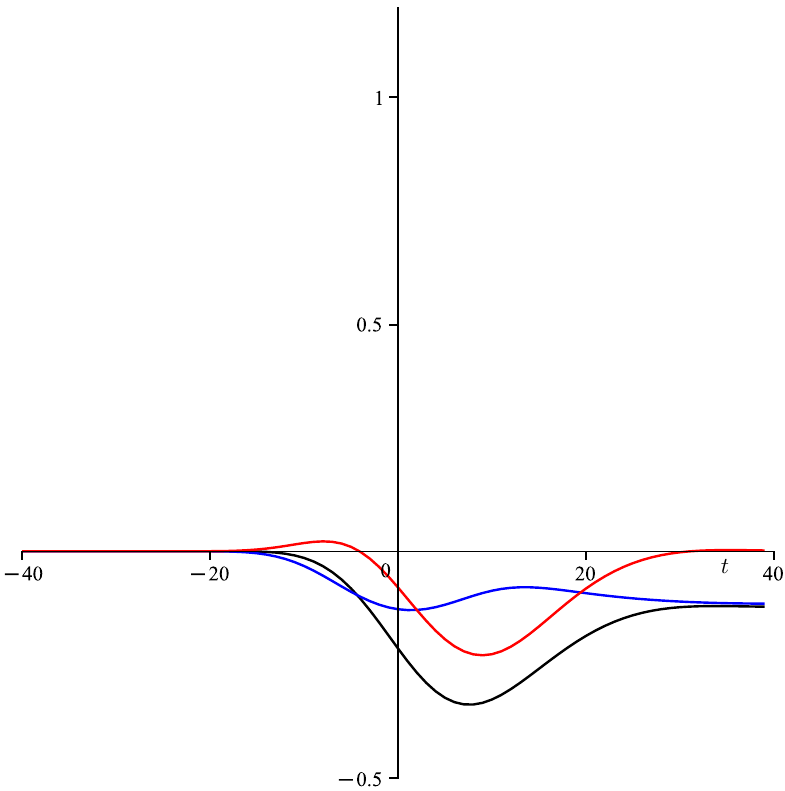}%
\caption{The horizontal axis represents the imaginary part of $z=40+\im t$, with $t\in(-40,40)$.
Along the vertical axis, the real (left) and imaginary (right) parts of the three coloured terms in equation \eqref{u00} are displayed. The grey curve (left) shows the absolute error in the approximation \eqref{u00}. All these terms have been multiplied by $\expe^{\sqrt{2}z} z^{\frac32}/K_{02}$.}
\label{ODEplot}
\end{figure}

When we cross the positive real $z$-axis, a nonlinear Stokes phenomenon occurs. In the transseries \eqref{ODEasympt1}, the constants $C_j$ are replaced by $C_j \pm K_{0j}$, depending on whether we cross the positive real axis upwards or downwards.

For a definite example, we will illustrate the Stokes and higher-order Stokes phenomena as we travel along 
the straight line $z=40+\im t$ with $t\in [-40,40]$.  In the half-plane $\Im(z)<0$, we will compute the solution $u_{0,0}(z)$, being the solution when $C_1=C_2=0$. At our starting point $z_0=40-40\im$, to capture the smoothing of the Stokes and higher-order Stokes phenomena, we need an approximation that is significantly more accurate than a mere optimal 
asymptotic approximation.  To achieve this we compute the solution at $z_1=40-80\im$ via optimal truncation of \eqref{ODEasympt0} taking approximately $|z_1|$ terms. We then numerically integrate the original nonlinear ODE \eqref{ODE} along a straight line from $z_1$ to $z_0$. Since $|z_1|>|z_0|$, the approximation at $z_1$, and hence the numerically integrated projection between $z_1$ and $z_0$, will be more accurate than the corresponding optimal truncated approximation at $z_0$. Hence, this numerically integrated result at $z_0$ may serve as the starting point for our subsequent numerical calculation of the Stokes and higher-order Stokes smoothings. The values of $u_{0,0}(z_0)$ and $u'_{0,0}(z_0)$ we so obtain are
\begin{align*}
   u_{0,0}(z_0)&\approx
   -0.008835575253062194710342008651
   -0.008945607882457218431762007868\im,\\
   u'_{0,0}(z_0)&\approx-0.000002831892119643922083317325
   +0.000223551608986949913034811617\im .
\end{align*}
In turn, these two values will serve as the starting point for numerical integration to determine the values of $u_{0,0}(z)$  along the line $z=40+\im t$, $t\in [-40,40]$.  This will allow us to compare, along that line, the level 1 hyperasymptotic approximation with the first two terms, which are of order $\expe^{-\sqrt{2} z}$:
\begin{gather}\label{u00}
\begin{split}
u_{0,0}(z) - \sum_{n=0}^{N_0-1}&\frac{a_n}{z^{n+1}}-\frac{K_{01}}{2\pi\im z^{N_0}}\sum_{n=0}^{N_1-1}
a_{1,0,n}F^{(1)}\left(z;\mytop{N_0+1-\sqrt{2}-n}{-1}\right)\\
\sim \; &{\color{blue}\frac{K_{02}}{2\pi\im z^{N_0}}\sum_{n=0}^{N_2-1}
a_{0,1,n}F^{(1)}\left(z;\mytop{N_0-\frac12-n}{-\sqrt{2}}\right)}\\
& +{\color{red}\frac{K_{01}K_{12}}{\left(2\pi\im\right)^2z^{N_0}}\sum_{n=0}^{N_2-1}
a_{0,1,n}F^{(2)}\left(z;\mytop{N_0-N_1+2-\sqrt{2},}{-1,}\mytop{N_1-\frac32+\sqrt{2}-n}{1-\sqrt{2}}\right)},\\
\end{split}
\end{gather}
where we take $N_0=56$, $N_1=16$, and $N_2=2$. Ideally for this illustration, we should have taken $N_2=1$, but since $a_{0,1,1}=\frac{13}2+\frac{31}{16}\sqrt{2}\approx 9.24$, truncating at this term is likely to lead to a relatively large error. In Figure \ref{ODEplot}, we observe that the second hyperterminant $F^{(2)}$ still exhibits the predicted simple hump. 

\section{Application: The telegraph equation}\label{telegraphexample}
In this example, we study the telegraph PDE system, for which a specific motivation is provided in Appendix \ref{Sect:pseudo}:
\begin{eqnarray*}
    2\frac{\partial^2 v}{\partial x\partial t}&=&v(x,t),\\
    v(x,0)&=&\expe^{\mp x} \qquad \Re(x) \gtrless 0, \\
    v(0,t)&=&1,\qquad\quad  t>0.
\end{eqnarray*}
Unlike the previous two examples, in the case the Borel-plane interpretation now has moving singularities, $\sigma_0(x,t)$ and $\sigma_1(x,t)$, which cross the contours of integration in both the $F^{(1)}$ and $F^{(2)}$ hyperterminants simultaneously.
 
This gives rise to simultaneous Stokes and higher-order Stokes phenomena. Collinearity of the Borel-plane singularities is only instantaneous here, with the resulting net difference is a sub-subdominant contribution that is only a fleetingly ``ghost-like" presence, being smoothly simultaneous switching on and off according to (the spirit of) Theorem \ref{simpledoublesmoothingthm0}, see Figure \ref{fig3a} below.  We will consider both $x$ and $t$ large.
. 

The Laplace transform solution of this system for $t>0$ is given by 
\begin{eqnarray}\label{vint2}
    v(x,t)&=&\expe^{\mp(x+\frac{t}{2})}+
    \frac{1}{2\pi\im}\int_{-\infty}^{(0+)} \frac{\expe^{pt+\frac{x}{2p}}}{p(1\pm 2p)}\id p, \qquad \Re(x) \gtrless 0,
\end{eqnarray}
where the initial (Hankel) contour of integration begins at negative infinity, circles
the origin once in the positive direction,  and returns to negative infinity.

We shall focus now on $\Re(x)>0$ and transform the relevant integral in (\ref{vint2}) 
using $z=\sqrt{xt/2}$, $2x=\beta^2 t$, $\beta \in {\mathbb C}$, the asymptotic parameter being
$z\rightarrow \infty$.  The integral representation of the solution then becomes
\begin{equation}\label{vint3}
    v(x,t)=\expe^{-x-\frac{t}{2}}+ \frac{1}{2\pi\im}
    \int_{-\infty}^{(0+)} \expe^{-z f(p)}g(p;\beta)\id p, \qquad \Re(z)> 0,
\end{equation}
with
\begin{equation*}
    f(p)=-p-\frac{1}{p}, \qquad g(p;\beta)=\frac{1}{p(1+\beta p)}.
\end{equation*}

A further transformation to a variable $u=f(p)$ would render the solution of this PDE in the form of a Borel transform.  The analysis could be carried out in the Borel plane, but here we shall carry out the analysis in the $p$-plane.  The final results are identical.

The integral representation \eqref{vint3} above seems to suggest that the multiplier of terms of the size $\expe^{-x-\frac{t}{2}}$ is just $1$. Remarkably, this is correct in the complex $x$-plane with $\Re(x) > 0$, but not
on $x>0$ where, as explained below, the combination of the simultaneous Stokes and higher-order Stokes phenomena on that line give rise to a net correct multiplier of $\frac2\pi\arctan\left(\left(2x/t\right)^{1/4}\right)$.

The integral has two saddles at $p=\pm 1$, a pole at $p=-\frac1\beta$ and an essential 
singularity at $p=0$.  Deforming the contour of integration to paths of steepest descent 
over the saddles demonstrates that both paths run into/out of the essential singularity at 
$p=0$, with an associated vanishing contribution. (An alternative analysis using the 
transformation $p=\expe^w$ can also be used to demonstrate this.)  

For $x>0$ and using \cite[ \href{http://dlmf.nist.gov/10.9.E19}{Eq. (10.9.19)} and \href{http://dlmf.nist.gov/10.27.E6}{Eq. (10.27.6)}]{NIST:DLMF}
the exact representation of the solution is given by
\begin{eqnarray*}
    v(x,t)&=&\expe^{-x-\frac{t}{2}}-
    \sum_{n=1}^\infty \left(-\beta\right)^{-n} I_n\left(2z\right),
\end{eqnarray*}
where $I_n$ represents the modified Bessel function of the first kind. Whilst this is helpful, knowledge of an analytical solution is not essential, since the 
integrals (\ref{vint3}) may be evaluated numerically along paths of steepest descent. 

\begin{figure}[ht]
\centering\includegraphics[width=0.4\textwidth]{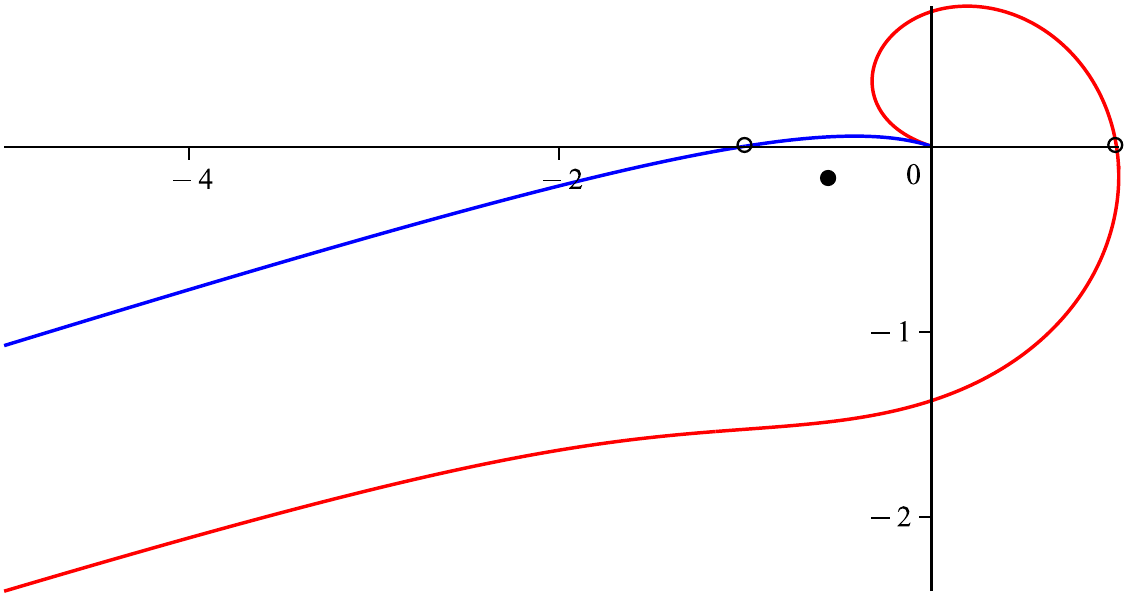}%
\qquad\includegraphics[width=0.4\textwidth]{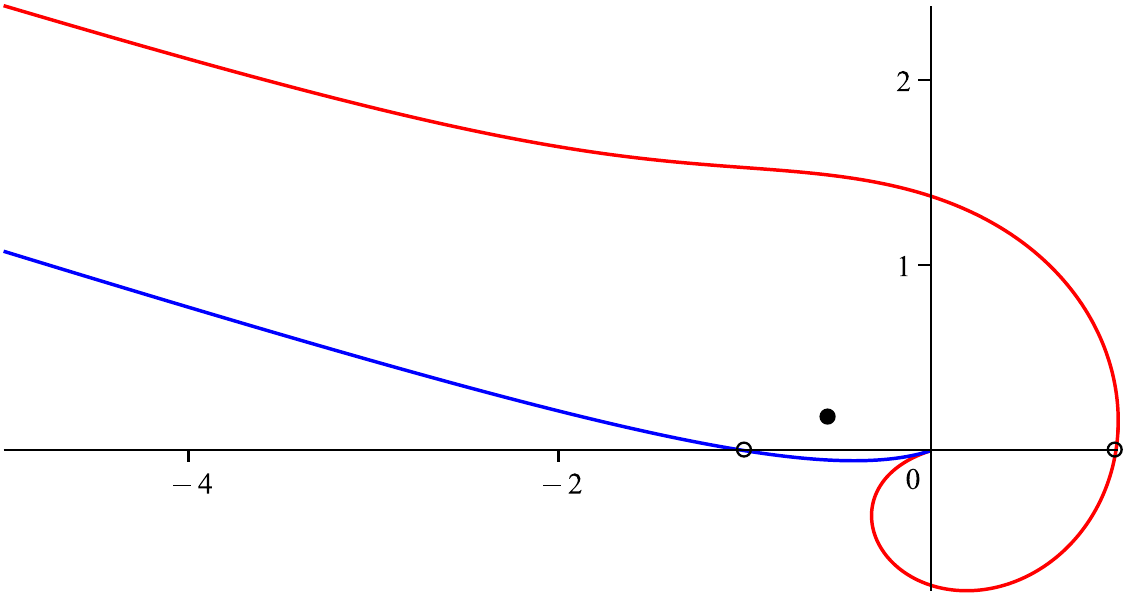}%
\caption{The steepest descent paths in the cases $t=10$, $x=15\expe^{-\pi\im/5}$
(left) and $x=15\expe^{\pi\im/5}$ (right). Note that the orientation
of the blue path through the saddle point $-1$ changes from right-to-left in the case
$\Im (x)<0$ to left-to-right in the case $\Im (x)>0$, the direction of integration along the red contour remains the same, compare with \eqref{vint3}. The pole $p=-\beta^{-1}$ is located at $\bullet$.}
\label{fig3b}
\end{figure}

We start with the case $\Im(\beta)<0$ and it follows from the analysis above
that both saddle point integrals contribute:
\begin{equation*}
v(x,t)=\expe^{-x-\frac{t}{2} }+v_0(x,t)
=\expe^{-(\beta+\beta^{-1})z}+I^{+}(z;\beta)+I^{-}(z;\beta), 
\qquad z=\sqrt{xt/2},\quad 2x=\beta^2t,
\end{equation*}
where the terms $I^\pm(z;\beta)$ are the integrals along the steepest descent paths through the saddle points at $p = \pm 1$. The associated asymptotic expansions about the saddles $p=\pm 1$ with 
corresponding saddle height exponents $f_\pm=\mp2$, are given by
\begin{equation*}
  I^{\pm}(z;\beta)=\frac{\expe^{- f_\pm z}}{\sqrt{z}}T^{\pm}(z;\beta), \qquad
  T^{\pm}(z;\beta) \sim \pm\sum_{r=0}^{\infty}\frac{T_r^{\pm}(\beta)}{z^r},
\end{equation*}
as $z\rightarrow \infty$. The coefficients in the saddle point expansions are given by 
\begin{align*}
  & T_r^{+}(\beta) = \frac{\left(-4\right)^{-r}}{2\sqrt{\pi} (1+\beta)} 
  \frac{\Gamma \left(r+\frac{3}{2}\right)}{r!\Gamma\left(\frac{3}{2}-r\right)} \,
   \genhyperF{2}{1}{1}{-2r}{\frac{3}{2}-r}{\frac{1}{1+\beta }},\\
   & T_r^{-}(\beta) = \frac{\im 4^{-r}}{2\sqrt{\pi}(1-\beta)} 
  \frac{\Gamma \left(r+\frac{3}{2}\right)}{r!\Gamma\left(\frac{3}{2}-r\right)} \, 
   \genhyperF{2}{1}{1}{-2r}{\frac{3}{2}-r}{\frac{1}{1-\beta }}. 
\end{align*}
Due to their arguments, these hypergeometric functions reduce to polynomials.

The level 0 asymptotic approximation is
\begin{equation*}
    v_0(x,t)=\expe^{2z}\sum_{r=0}^{N_0-1}
    \frac{T_r^{+}(\beta)}{z^{r+\frac12}}+R_0\left(z;\beta;N_0\right).
\end{equation*}
Based on the analysis in \cite{OD98b}, the optimal number of terms
at this level is $N_0\approx |4z|$. Moving to the level 1 hyperasymptotic approximation:
\begin{gather}\label{examplelevel1}
\begin{split}
v_0(x,t)=\;&\expe^{2z}\sum_{r=0}^{N_0^{+}-1}
    \frac{T_r^{+}(\beta)}{z^{r+\frac12}}
    +\frac{2\expe^{2z}z^{\frac12-N_0^{+}}}{2\pi\im}
    \sum_{r=0}^{N_0^{-}-1} T_r^{-}(\beta) 
    F^{(1)}\left(z;\mytop{N_0^{+}-r}{-4}\right)
     \\
    &  -\frac{\expe^{2z}z^{\frac12-N_0^{+}}}{2\pi\im}
    F^{(1)}\left(z;\mytop{N_0^{+}+\frac12}{-\beta-2-\beta^{-1}}\right)
     \\
    &  -\expe^{-2z}\sum_{r=0}^{N_0^{-}-1}
    \frac{T_r^{-}(\beta)}{z^{r+\frac12}}
    -\frac{\expe^{-2z}z^{\frac12-N_0^{-}}}{2\pi\im}
    F^{(1)}\left(z;\mytop{N_0^{-}+\frac12}{-\beta+2-\beta^{-1}}\right)\\
    & +R_1\left(z;\beta;N_0^{+},N_0^{-}\right), 
\end{split}
\end{gather}
where the optimal numbers of terms are now
\begin{equation}\label{optN0N1}
    N_0^{+}\approx |4z|+\left|\left(\beta-2+\beta^{-1}\right) z\right|\qquad{\rm and}
    \qquad N_0^{-}\approx \left|\left(\beta-2+\beta^{-1}\right) z\right|.
\end{equation}
In \eqref{examplelevel1}, the first two lines give the dominant expansion alongside its two re-expansions arising from it remainder terms, all sharing the common factor $\expe^{2z}$. The third line accounts for the contribution of the subdominant saddle and its re-expansion. Notably, there are two terms in \eqref{examplelevel1} that do not involve a sum, representing the hyperasymptotic contributions of the pole at $p=-\frac1\beta$. It is important to observe that the direct pole contribution is absent, indicating the pole's inactivity when $\Im(\beta)<0$. This becomes even clearer when considering the dominant term in the level 2 re-expansion:
\begin{align*}
R_1\left(z;\beta;N_0^{+},N_0^{-}\right)=\; &
    \frac{2\expe^{2z}z^{\frac12-N_0^{+}}}{\left(2\pi\im\right)^2}
    F^{(2)}\left(z;\mytop{N_0^{+}-N_0^{-}+1,}{-4,}
    \mytop{N_0^{-}+\frac12}{-\beta+2-\beta^{-1}}\right)
     \\
& +R_2\left(z;\beta;N_0^{+},N_0^{-}\right).
\end{align*}
The terms
\begin{gather}\label{polecontr}
\begin{split}
T(z;\beta)=\;&
{\color{red}\frac{2\expe^{2z}z^{\frac12-N_0^{+}}}{\left(2\pi\im\right)^2}
    F^{(2)}\left(z;\mytop{N_0^{+}-N_0^{-}+1,}{-4,}
    \mytop{N_0^{-}+\frac12}{-\beta+2-\beta^{-1}}\right)}\\
& {\color{blue} -\frac{\expe^{2z}z^{\frac12-N_0^{+}}}{2\pi\im}
    F^{(1)}\left(z;\mytop{N_0^{+}+\frac12}{-\beta-2-\beta^{-1}}\right)} 
{\color{magenta}-\frac{\expe^{-2z}z^{\frac12-N_0^{-}}}{2\pi\im}
    F^{(1)}\left(z;\mytop{N_0^{-}+\frac12}{-\beta+2-\beta^{-1}}\right)},
\end{split}
\end{gather}
address the switching on/off of the pole contribution, which is of size
$\expe^{-x-\frac{t}{2}}$, across a Stokes line. 

Then in the asymptotics of the solution of the above problem
terms of the order $\left(xt/2\right)^{-1/4}\expe^{\pm\sqrt{2xt}}$ appear, but there is also a sub-subdominant term of the order $\expe^{-x-\frac{t}{2}}$.

When comparing the real parts of these terms relative to $\expe^{-x-\frac{t}{2}}$ in Figure \ref{fig3a}, several observations become apparent. Notably, the contributions from the two $F^{(1)}$ terms exhibit considerable similarity, as evidenced by the overlap of the blue and magenta curves. Moreover, the sum of all three terms (depicted by the black curve) approaches zero asymptotically across most regions, except in proximity to $\Im(\beta)=0$. Consequently, in the two half-planes $\Im(\beta) \lessgtr 0$, the multiplier governing the sub-subdominant contribution of the integral in \eqref{vint3} vanishes. 

However, the sub-subdominant term has a  ``ghost-like'' non-zero presence on (and only very close to) the positive real line ($\Im(\beta)=0$) ({\it cf.} Figure \ref{fig3a}), and there takes has a multiplier $\frac{-1}{\pi}\arctan\frac{2\sqrt{\beta}}{\left|\beta-1\right|}$ when $x>0$, as indicated by substituting in the values \eqref{optN0N1}
into \eqref{ourarctan}. Note that when $2x > t > 0$, we have $\beta > 1$, and the expression $1 - \frac{1}{\pi} \arctan \frac{2 \sqrt{\beta}}{\left| \beta - 1 \right|}$ simplifies to $\frac{2}{\pi} \arctan \sqrt{\beta}=\frac2\pi\arctan\left(\left(2x/t\right)^{1/4}\right)$.

\begin{figure}[ht]
\centering\includegraphics[width=0.4\textwidth]{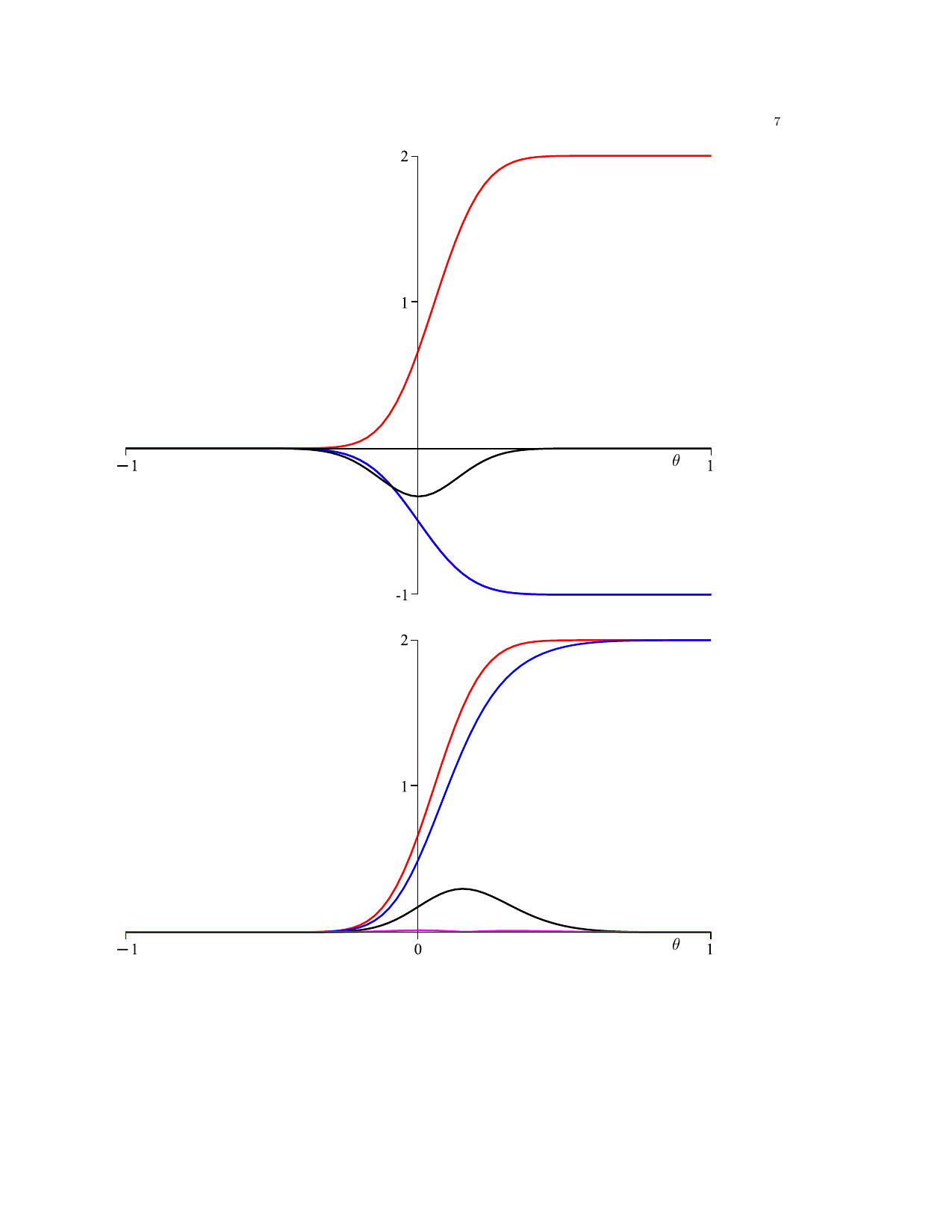}%
\qquad\raisebox{0.085\textwidth}{\includegraphics[width=0.4\textwidth]{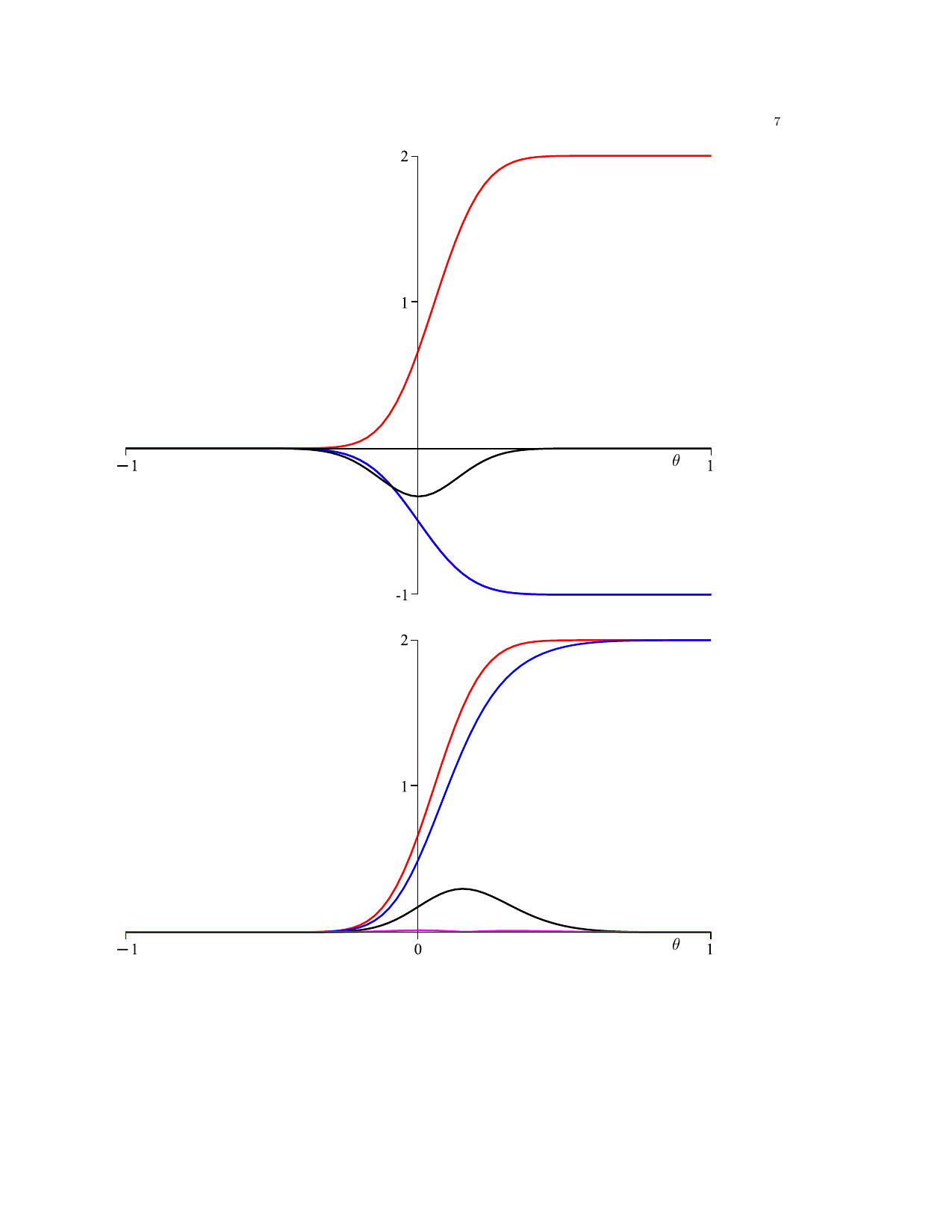}}%
\caption{The case $t=10$, $x=50\expe^{\im\theta}$, 
$N_0^{+}=86$, $N_0^{-}=23$. 
For the left figure along the 
vertical axis, we display the real part of the terms in the solution of the telegraph equation \eqref{polecontr}
divided by the extra factor $\expe^{-(\beta+\beta^{-1})z}$ (the
pole contribution).  The real parts of the magenta and blue terms in \eqref{polecontr} are almost identical and so indistinguishable on the graph. The black curve is the net result $T(z;\beta)$, representing the switching on/off of the pole contribution in the vicinity of the Stokes line, and is the sum of the red, blue and magenta contributions.  It seen to vanish rapidly, but smoothly, away from the line, with only a  ``ghost-like'' non-zero apparition on the Stokes line. 
For the right figure along the 
vertical axis, we display the real part of the terms in the approximation of the $F^{(2)}$ hyperterminant 
\eqref{F2smoothingbeta} The colours on the graphs refer to the coloured terms in the respective equations.  The magenta curve is the absolute value of the
error in the two term approximation \eqref{F2smoothingbeta}.  }
\label{fig3a}
\end{figure}

To exemplify the applicability of Theorem \ref{simpledoublesmoothingthm0}, we additionally display in Figure \ref{fig3a} (right) the expression 
\begin{gather}
\begin{split}\label{F2smoothingbeta}
&{\color{red}\frac{2\expe^{\left(\beta+2+\beta^{-1}\right) z}z^{\frac12-N_0^{+}}}{\left(2\pi\im\right)^2}
    F^{(2)}\left(z;\mytop{N_0^{+}-N_0^{-}+1,}{-4,}
    \mytop{N_0^{-}+\frac12}{-\beta+2-\beta^{-1}}\right)}\\
& \sim
{\color{blue}\frac{\left(\beta+1\right)^{2N_0^{+}+1}}
{\left(4\beta\right)^{N_0^{+}-N_0^{-}+1}
\left(\beta-1\right)^{2N_0^{-}-1}}\frac{\Gamma(N_0^{+}-N_0^{-}+1)\Gamma(N_0^{-}+\frac12)}{\Gamma(N_0^{+}+\frac32)}
\genhyperF{2}{1}{1}{N_0^{+}-N_0^{-}+1}{N_0^{+}+\frac32}{\frac{\left(\beta+1\right)^2}{4\beta}}}\\
&\qquad{\color{blue}\times
\frac{\expe^{\left(\beta+2+\beta^{-1}\right) z}z^{\frac12-N_0^{+}}}{2\pi^2}
F^{(1)}\left(z;\mytop{N_0^{+}+\frac12}{-\beta-2-\beta^{-1}}\right)}\\
&\qquad+\tfrac12
\Erfc\left(d_1\alpha_0(z)\sqrt{\frac{N_0^{-}-\frac12}2};d_1\alpha_0(\zeta_1)\sqrt{\frac{N_0^{-}-\frac12}2};
d_1^{-1}\sqrt{\frac{N_0^{+}-N_0^{-}}{N_0^{-}-\frac12}}\right),
\end{split}
\end{gather}
utilising the notation defined in \S\ref{Notation} for $d_1$ and $\alpha_0$.
It is worth noting that in this instance, the primary contribution on the right-hand side of \eqref{F2smoothingbeta} arises from the $F^{(1)}$ term.

Recalling that $z=\sqrt{ xt/2}$ and $\beta=\sqrt{2x/t}$, and considering the hyperterminant $F^{(2)}$ in \eqref{F2smoothingbeta} where $\sigma_0=4\expe^{\pi\im}$ and $\sigma_1=-\beta+2-\beta^{-1}$, we depict $\arg (\sigma_0 z)$ and $\arg (\sigma_1 z)$ in Figure \ref{fig3bb}. The red curve indicates that the $\arg (\sigma_0z)=\pi$ higher-order Stokes phenomenon occurs when $\arg x=\theta=0$. Simultaneously, the $\arg\sigma_0=\arg\sigma_1$ higher-order Stokes phenomenon emerges as the blue curve intersects the red curve.

\begin{figure}[ht]
\centering\includegraphics[width=0.5\textwidth]{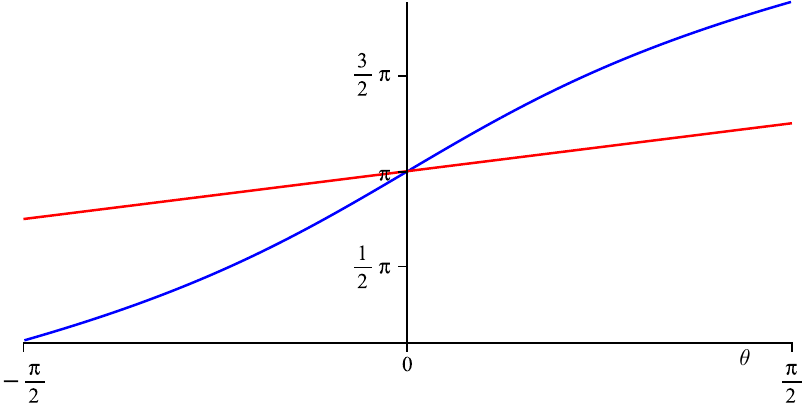}%
\caption{The red curve represents $\arg (\sigma_0z)$, while the blue curve represents $\arg (\sigma_1z)$.
}
\label{fig3bb}
\end{figure}

\section{Application: Late-late coefficient smoothing}\label{termsmoothingexample}

Given that there is a smoothing of the higher-order Stokes phenomenon at a functional level, it is natural to ask \cite{Shelton2023} what the corresponding effect is on individual terms in the asymptotic expansions that are being switched on at a Stokes line.  Following the spirit of Berry \cite{Berry89}, Shelton {\it et al.} \cite{Shelton2023} derived the smoothing for such terms using a formal Borel re-summation of the divergent tails of the asymptotic re-expansions of the individual terms themselves, followed by a optimal truncation and a sequence of approximations for resulting integrals.  The result is a smooth error function multiplier for  the additional contributions to the late terms of the original asymptotic series that are switching on at the higher-order Stokes line, see (3.15), (3.23) of \cite{Shelton2023}.  A Borel--hyperasymptotic approach allows for a straightforward alternative rigorous derivation of this result, without the need to re-sum divergent series.  

To do this we can rewrite the asymptotic series of the expansion of the original function \eqref{exact} as
\begin{equation}\label{origasymp}
T^{(k)}(z;{\bf a}) = \sum_{r=0}^{N_0-1}\frac{T^{(k)}_r({\bf a})}{z^r}
+R_{N_0}(z;{\bf a}),
\end{equation}
and combine two exact copies of this (with truncations $N_0$ and $N_0+1$) with \eqref{hyperexpress} to derive the hyperasymptotics for individual late coefficients in that expansion:
\begin{equation*}
\begin{split}
    T^{(k)}_{N_0}({\bf a}) =\;&z^{N_0}\left(R_{N_0}(z;{\bf a})-R_{N_0+1}(z;{\bf a})\right) \\
    =\;& - \sum_{k_1\ne k}\frac{K_{k_1k}({\bf a})}{2\pi \im} \left[
             \sum_{s=0}^{N_{1}^{(k_1)}-1}T^{(k_1)}_s({\bf a})F^{(1)}\left(0;\mytop{N_0+1-s+\mu_{k_1k}}{\lambda_{k_1k}}\right)\right. \\
    &  +\sum_{k_2\ne k_1}\frac{K_{k_2k_1}({\bf a})}{2\pi \im}
            \left\{\sum_{s=0}^{N_2^{(k_2)}-1}T^{(k_2)}_s({\bf a})F^{(2)}\left(0;\mytop{N_0-N_1^{(k_1)}+\mu_{k_1k}+2,}{\lambda_{k_1k},}\mytop{N_1^{(k_1)}-s+\mu_{k_2k_1}}{\lambda_{k_2k_1}}\right)   \right\} \\
    & + \ldots\Bigg].
\end{split}
\end{equation*}
This observation was originally used in \cite[Eq. (7.4)]{OD98b}. Additionally, from \cite[Eq. (2.2)]{OD09a}, we know that $F^{(1)}\left(0;\mytop{M+1}{\sigma}\right)=- \expe^{M\pi\im} \sigma^{-M}\Gamma(M)$. By resetting $N_0=r$, $N_1^{(k_1)}=S^{(k_1)}$, for convenience, we obtain
\begin{gather}\begin{split}\label{hyperexpress1}
\begin{split}
    T^{(k)}_{r}({\bf a}) 
    \sim\; & \sum_{k_1\ne k}\frac{K_{k_1k}({\bf a})}{2\pi \im} \left[
             \sum_{s=0}^{S^{(k_1)}-1}T^{(k_1)}_s({\bf a})
             \frac{\Gamma(r-s+\mu_{k_1k})}{\lambda_{kk_1}^{r-s+\mu_{k_1k}}}\right. \\
    & -\sum_{k_2\ne k_1}\frac{K_{k_2k_1}({\bf a})}{2\pi \im}
            \sum_{s=0}^{\infty}T^{(k_2)}_s({\bf a})F^{(2)}\left(0;\mytop{r-S^{(k_1)}+\mu_{k_1k}+2,}{\lambda_{k_1k},}\mytop{S^{(k_1)}-s+\mu_{k_2k_1}}{\lambda_{k_2k_1}}\right)\Bigg].
\end{split}
\end{split}\end{gather}
With suitable truncations, this form provides an exponentially-improved large-$r$ expansion for the late coefficients $T^{(k)}_{r}$. Note that $z$ does not appear in this result, so preventing an $\arg(\sigma_0z)\approx\pi$ higher-order Stokes phenomenon, though one may occur when $\arg\sigma_0\approx\arg\sigma_1$ ({\it cf.} \eqref{connect3}). From \cite[Eq. (3.2)]{OD09a}, we have
\[
F^{(2)}\left(0;\mytop{N_0+2,}{\sigma_0,}\mytop{N_1+1}{\sigma_1}\right)
=\frac{\expe^{(N_0+N_1+1)\pi\im}}{\sigma_0^{N_0+1}\sigma_1^{N_1}
}\frac{\Gamma(N_0+1)\Gamma(N_1+1)}{N_0+N_1+1}\genhyperF{2}{1}{1}{N_0+1}{N_0+N_1+2}{1+\frac{\sigma_1}{\sigma_0}},
\]
and, hence, from Theorem \ref{hypergeom}:
\[
F^{(2)}\left(0;\mytop{N_0+2,}{\sigma_0,}\mytop{N_1+1}{\sigma_1}\right)\sim 
\frac{\expe^{(N_0+N_1+1)\pi\im}\Gamma(N_0+N_1+1)}{\left(\sigma_0+\sigma_1\right)^{N_0+N_1+1}}
\pi\im\erfc\left(\gamma\big(\tfrac{\sigma_1}{\sigma_0}\big)\sqrt{\tfrac{1}{2}N_1}\right),
\]
where $\gamma\big(\frac{\sigma_1}{\sigma_0}\big)$ is defined in \eqref{gamma}. Consequently, we have
\begin{equation*}
    F^{(2)}\left(0;\mytop{r-S^{(k_1)}+\mu_{k_1k}+2,}{\lambda_{k_1k},}
\mytop{S^{(k_1)}+\mu_{k_2k_1}}{\lambda_{k_2k_1}}\right)\sim
\frac{\Gamma(r+\mu_{k_2k})}{\lambda_{kk_2}^{r+\mu_{k_2k}}}
\pi\im\erfc\left(\gamma\left(\frac{\lambda_{k_2k_1}}{\lambda_{k_1k}}\right)
\sqrt{\frac{S^{(k_1)}+\mu_{k_2k_1}-1}{2}}\right),
\end{equation*}
as $r\to+\infty$. 

Note that $\Gamma(r+\mu_{k_2k})/\lambda_{k_2k}^{r+\mu_{k_2k}}$
is the exponentially subdominant contribution to the $T^{(k)}_r$ that is here switched on smoothly.  This term depends on the larger (in modulus) singulant 
$\lambda_{k_2k}$ and so is contributing in \eqref{hyperexpress1} at an exponentially small level.  In absolute terms, this contribution is at a scale that is doubly exponentially smaller relative to the original asymptotic expansion \eqref{origasymp}. 

Thus we see that, when $\arg\sigma_0\approx\arg\sigma_1$ for the nearest Borel-plane singularity $k^*\ne k_1$ in the singularity-set $k_2$ (relative to the original Borel singularity around which the initial expansion was made), at the leading order of the doubly exponentially small level, we have
\begin{equation}\label{hyperexpress1final}
\begin{split}
    T^{(k)}_{r}({\bf a}) 
    \sim\; & \sum_{k_1\ne k}\frac{K_{k_1k}({\bf a})}{2\pi \im} 
             \sum_{s=0}^{S^{(k_1)}-1}T^{(k_1)}_s({\bf a})
             \frac{\Gamma(r-s+\mu_{k_1k})}{\lambda_{kk_1}^{r-s+\mu_{k_1k}}} \\
    & -\frac{K_{k^*k_1}({\bf a})}{2}\erfc\left(\gamma\left(\frac{\lambda_{k^*k_1}}{\lambda_{k_1k}}\right)
\sqrt{\frac{S^{(k*)}+\mu_{k^*k_1}-1}{2}}\right)
           \frac{\Gamma(r+\mu_{k^*k})}{\lambda_{k^*k}^{r+\mu_{kk^*}}}T^{(k^*)}_0({\bf a})
,
\end{split}
\end{equation}
where the sums over $k_1$ are truncated at $S^{(k_1)}-1$ to include only terms that are larger in magnitude than the term containing the error function. This is a rigorous derivation of \cite[\S3.4]{Shelton2023}.

For a numerical illustration of this result, we can derive the asymptotic approximation for the late coefficients  of the telegraph equation expansion $T_r^+(\beta)$, of \S\ref{telegraphexample}:
\begin{gather*}\begin{split}\label{Telegraphlateterm}
    T_r^{+}(\beta)\sim \frac2{2\pi\im}\sum_{s=0}^{S-1}T_s^{-}(\beta)
    \frac{\Gamma(r-s)}{4^{r-s}} 
    &-\frac2{\left(2\pi\im\right)^2}F^{(2)}\left(0;\mytop{r-S+2,}{-4,}
    \mytop{S+\frac12}{-\beta+2-\beta^{-1}}\right)\\
& +\frac{\Gamma(r+\frac12)}{2\pi\im\left(\beta+2+\beta^{-1}\right)^{r+\frac12}}
\end{split}\end{gather*}
as $r\to+\infty$, with the optimal $S\approx \left|\frac{\beta-2+\beta^{-1}}{\beta+2+
\beta^{-1}}\right|r$. The last term in the approximation is only a single term because it is a contribution from the simple pole at $p=-\frac1\beta$ in \eqref{vint3}, rather than from a saddle point.

The purpose of the $F^{(2)}$ hyperterminant, and consequently, the higher-order Stokes phenomenon, is to provide a smooth interpretation of the naive asymptotic approximation
\begin{equation*}
    T_r^{+}(\beta)\sim \frac2{2\pi\im}\sum_{s=0}^{S-1}T_s^{-}(\beta)
    \frac{\Gamma(r-s)}{4^{r-s}}
\pm\frac{\Gamma(r+\frac12)}{2\pi\im\left(\beta+2+\beta^{-1}\right)^{r+\frac12}},
\end{equation*}
as $r\to+\infty$.
The upper or lower sign in the final term is chosen depending on whether $\beta$ is in the upper or lower half-plane. This term is absent if $\arg\beta=0$. We illustrate this smoothing by depicting the quantity
\begin{equation}\label{latetermplot}
Q=\cfrac{ T_r^{+}(\beta)- \cfrac2{2\pi\im}\displaystyle\sum_{s=0}^{S-1}T_s^{-}(\beta)\cfrac{\Gamma(r-s)}{4^{r-s}}}{ \cfrac{\Gamma(r+\frac12)}{2\pi\im\left(\beta+2+\beta^{-1}\right)^{r+\frac12}}}
\end{equation}
in Figure \ref{fig3cc}. The difference between $Q$ and the predicted error function smoothing in \eqref{hyperexpress1final} is also shown there (in red) on the same scale, which confirms the validity of the result.   

\begin{figure}[ht]
\centering\includegraphics[width=0.5\textwidth]{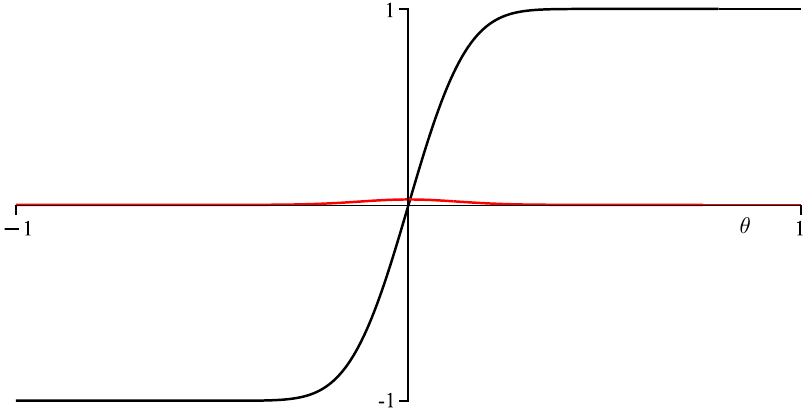}%
\caption{With $Q$ defined in \eqref{latetermplot}, $r=100$, $S=27$ and 
$\beta=\sqrt{10}\expe^{\im\theta}$: the black curve represents $\Re(Q)$, while the red curve depicts 
$\left|Q+\erfc\left(\gamma\big(\frac{\beta-2+\beta^{-1}}4\big)
\sqrt{\frac{S-\frac12}2}\right)-1\right|$.
This illustrates the smooth nature of the effect of the higher-order Stokes phenomenon on the late terms as per \eqref{hyperexpress1final}.}
\label{fig3cc}
\end{figure}

\section{Rigorous proofs of the higher-order Stokes smoothings}\label{AllStokessection}

In this section, we provide the rigorous proofs for the main results outlined in \S\ref{hyperterminants} and \S\ref{mainresults} above.

\subsection{Notation}\label{Notation} Let $\sigma_2=\sigma_0+\sigma_1$, $N_2=N_0+N_1$, and $\frac{\sigma_1}{\sigma_0}=\frac{N_1}{N_0}\expe^{\im\nu}$, $\nu\in\mathbb{ C}$. For $j=0,1,2$, we introduce the following notation:
\begin{equation}
   \zeta_j=\expe^{\pi\im}\frac{N_j}{\sigma_j},\quad \im a_j(z)=\im a_j=1+\frac{\sigma_j z}{N_j}. 
\end{equation}
Additionally, we define $\alpha_j=\alpha_j(z)$ by
\begin{equation}
    \tfrac{1}{2}\alpha _j^2  = \im a_j  + \ln (1 - \im a_j ) = 1 + \frac{\sigma _j z}{N_j } +
\ln \left( \expe^{ - \pi \im} \frac{\sigma _j z}{N_j } \right),
\end{equation}
with the branches specified by
\begin{equation}
    \left(-1\right)^ja_j\sim\alpha_j-\tfrac13\im\alpha_j^2
\end{equation}
as $\left|\alpha_j\right|\to 0$. Each $\alpha_j$ is a univalent analytic function of $z$ in the sector $\left|\arg\left(\expe^{-\pi\im}\sigma_jzN_j^{-1}
\right)\right|<2\pi$ (see, e.g., \cite[\S5]{Temme1979}). Finally, we set
\begin{equation}
g_{j}(z)=\frac{1}{a_j(z)}-\frac1{\alpha_j(z)},\quad
d_1=\frac{\im \alpha_0(\zeta_1)}{1-\frac{\zeta_1}{\zeta_0}},\quad
d_2(z)=\frac{\im-\frac{z\alpha_0'(z)}{\zeta_1\alpha_0'(\zeta_1)}\im\expe^{N_1 r(z)}}{1-\frac{z}{\zeta_1}},
\end{equation}
and $r(z)=\frac12\alpha_1^2(z)-\frac12 d_1^2\left(\alpha_0(z)-\alpha_0(\zeta_1)\right)^2$.
We observe that $\alpha_j(z)\alpha_j'(z)=\frac{\im a_j(z)}{z}=\frac1{z}-\frac1{\zeta_j}$. 
Consequently, $\alpha_j'(z)\sim \frac{\im}{z}$ when $a_j(z)\sim\alpha_j(z)$.
Furthermore, we note that $d_1^2 = \frac{-2(1-v+\ln v)}{(1-v)^2}$, where $v = \frac{\sigma_0 N_1}{\sigma_1 N_0}$. The integral representation $d_1^2 = 2\int_0^{+\infty} \frac{\d s}{(s + v)(s + 1)^2}$, combined with an argument analogous to the proof of \cite[Corollary 6.6]{SchillingSongVondracek2012}, shows that $\left|\arg(d_1^2)\right| \leq \left|\arg v\right|$ for $\arg v \in (-\pi, \pi)$. This property will be used in \S\ref{sectcoll}.

\subsection{The ordinary Stokes phenomenon}\label{normalStokessection}
The first hyperterminant is essentially an incomplete gamma function, given by:
$F^{(1)}\left(z;\mytop{N}{\sigma}\right)=
\Gamma(N)\expe^{\sigma z+N\pi\im}z^{N-1}\Gamma(1-N,\sigma z)$. Its uniform asymptotic expansion is well known (see \cite{Temme1996b}): we include its derivation, as it is particularly relevant because it involves a uniform asymptotic approximation of integrals in the case of a saddle point near a pole. This situation will reappear in the proof of Theorem \ref{hypergeom}. Moreover, the notation introduced here will be essential for the proof of Theorem \ref{simpledoublesmoothingthm0}.

The following describes the switching on of $2\pi\im \expe^{\sigma_0 z} z^{N_0}$ as we cross the line
$\arg(\sigma_0 z)=\pi$. For comparison, see equation \eqref{connect1}. We use the notation from \S\ref{Notation}.

\begin{theorem}\label{normalStokes}
Assuming that $\sigma_0$ and its reciprocal are bounded, we have
\begin{gather}\label{Stokes1}
\begin{split}
\frac{\expe^{-\sigma_0 z}}{z^{N_0}}F^{(1)}\left(z;\mytop{N_0+1}{\sigma_0}\right)= \pi\im 
\erfc\left(\alpha_0(z)\sqrt{\tfrac{1}{2}N_0}\right)
&+\im\sqrt{\frac{2\pi}{N_0}}g_0(z)
\expe^{-\frac12\alpha_0^2(z)N_0}\\ 
&+\bigO{\left(\expe^{-\frac12\alpha_0^2(z)N_0}N_0^{-3/2}\right)},
\end{split}
\end{gather}
as $\Re(N_0)\to +\infty$, uniformly with respect to $\left|\arg\left(\expe^{-\pi\im}\sigma_0zN_0^{-1}
\right)\right|\le 2\pi-\delta$ ($<2\pi$).
\end{theorem}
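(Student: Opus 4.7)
The plan is to treat \eqref{Stokes1} as a uniform saddle-point-with-nearby-pole problem for the integral $F^{(1)}\!\left(z;\mytop{N_0+1}{\sigma_0}\right)=\int_0^{[\pi-\arg\sigma_0]}\expe^{N_0\phi(\tau)}(z-\tau)^{-1}\id\tau$, where $\phi(\tau)=\ln\tau+\sigma_0\tau/N_0$. The saddle at $\zeta_0=\expe^{\pi\im}N_0/\sigma_0$ (the unique zero of $\phi'$) lies on the original ray of integration, and the pole at $\tau=z$ collides with it precisely on the Stokes line $\arg(\sigma_0 z)=\pi$, which forces a uniform treatment in the relative position of the pole. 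After deforming to the steepest descent path near $\zeta_0$, I would introduce the canonical variable $\alpha$ defined by $\tfrac{1}{2}\alpha^2=\phi(\tau)-\phi(\zeta_0)$, with the branch fixed so that $\alpha(\zeta_0)=0$ and $\alpha(z)=\alpha_0(z)$; this map is univalent on $|\arg(\expe^{-\pi\im}\sigma_0\tau N_0^{-1})|<2\pi$, which is exactly the sector of the theorem. A short manipulation using the defining relation of $\alpha_0$ gives $\expe^{N_0\phi(\zeta_0)}/(z^{N_0}\expe^{\sigma_0 z})=\expe^{-N_0\alpha_0^2/2}$, so
\[
\frac{\expe^{-\sigma_0 z}}{z^{N_0}}F^{(1)}\!\left(z;\mytop{N_0+1}{\sigma_0}\right)=\expe^{-\frac{1}{2}N_0\alpha_0^2}\int_{\widetilde\Gamma}\expe^{\frac{1}{2}N_0\alpha^2}f(\alpha)\id\alpha,
\]
with $f(\alpha)=(\d\tau/\d\alpha)/(z-\tau(\alpha))$ and $\widetilde\Gamma$ the image contour (the imaginary $\alpha$-axis near $\alpha=0$).

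Next, I would split $f(\alpha)=-(\alpha-\alpha_0)^{-1}+h(\alpha)$, the residue $-1$ being inherited by the chain rule from that of $(z-\tau)^{-1}$ at $\tau=z$. The polar piece is a classical Gaussian Hilbert transform evaluating in closed form to $-\pi\im\expe^{\frac{1}{2}N_0\alpha_0^2}\erfc(\alpha_0\sqrt{N_0/2})$, which once combined with the prefactor and the overall minus sign produces the leading $\pi\im\erfc(\alpha_0\sqrt{N_0/2})$ of \eqref{Stokes1}. For the remaining analytic Gaussian integral, Watson's lemma gives $\im\sqrt{2\pi/N_0}\,h(0)+\bigO(N_0^{-3/2})$, and it remains to identify $h(0)=g_0(z)$. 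Differentiating $\tfrac{1}{2}\alpha^2=\phi(\tau)-\phi(\zeta_0)$ and using $\phi''(\zeta_0)=-\zeta_0^{-2}$ together with the sign convention $a_0\sim\alpha_0$ as $\alpha_0\to 0$, one finds $\d\tau/\d\alpha|_{\alpha=0}=-\im\zeta_0$; combined with the identity $z-\zeta_0=-\zeta_0\cdot\im a_0(z)$ this yields $f(0)=1/a_0(z)$, and subtracting the pole part at $\alpha=0$ gives $h(0)=1/a_0-1/\alpha_0=g_0(z)$, matching the coefficient in \eqref{Stokes1}.

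The main obstacle will be establishing that the $\bigO(N_0^{-3/2})$ error in Watson's lemma is uniform across the whole sector $|\arg(\expe^{-\pi\im}\sigma_0 zN_0^{-1})|\le 2\pi-\delta$, and in particular does not deteriorate as $\alpha_0\to 0$, i.e.\ as the pole coalesces with the saddle. Although $f$ itself is singular there, the subtraction of its principal part is designed to leave an $h$ whose Taylor coefficients about $\alpha=0$ are bounded locally uniformly in $z$; this can be verified by applying the Cauchy integral formula for $h$ on a fixed circle lying inside the domain of univalence of $\tau\leftrightarrow\alpha$. The contribution from the tails of $\widetilde\Gamma$ is exponentially small because $\Re(\alpha^2)$ is bounded away from zero there, and the remaining arguments run in parallel to Temme's treatment of the uniform asymptotic expansion of the incomplete gamma function---which is the special case at hand via the identity $F^{(1)}\!\left(z;\mytop{N}{\sigma}\right)=\Gamma(N)\expe^{\sigma z+N\pi\im}z^{N-1}\Gamma(1-N,\sigma z)$.
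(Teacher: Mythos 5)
Your proposal is correct and follows essentially the same route as the paper: both reduce to Temme's saddle-point-near-a-pole method from \cite[\S21.1]{Temme2015}, introduce the canonical quadratic variable, split off the simple-pole part whose Gaussian Hilbert transform yields the $\erfc$, and identify the coefficient of the correction term as $g_0(z)=1/a_0-1/\alpha_0$ exactly as you do. The only difference is cosmetic: the paper first rescales to $t=\expe^{-\pi\im}\sigma_0\tau/N_0$ and works with $s$ (where $s=-\im\alpha$ in your notation), and obtains the $\bigO(N_0^{-3/2})$ remainder via one explicit integration by parts rather than via Watson's lemma, but the content is identical.
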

\begin{proof} Let us temporarily assume that both $N_0$ and $\expe^{-\pi \im} \sigma_0 z$ are positive. We employ the method described in \cite[\S21.1]{Temme2015}. By substituting $\tau=\expe^{\pi\im}t N_0/\sigma_0$ into the integral representation of the first hyperterminant, we arrive at
\begin{align*}
F^{(1)}\left(z;\mytop{N_0+1}{\sigma_0}\right)&=\int_0^{[\pi-\arg\sigma_0]}\frac{\expe^{\sigma_0\tau}\tau^{N_0}}{z-\tau}\id\tau
=\left(\frac{\expe^{\pi\im}N_0}{\sigma_0}\right)^{N_0}\int_0^{+\infty}\frac{\expe^{-N_0 t}t^{N_0}}{1-\im a_0-t}\id t\\
&=\left(\frac{\expe^{\pi\im}N_0}{\expe\sigma_0}\right)^{N_0}\int_{-\infty}^{+\infty}\frac{\expe^{-N_0 \frac12 s^2}f(s)}{-\im\alpha_0-s}\id s\cr
&=f(-\im\alpha_0)\left(\frac{\expe^{\pi\im}N_0}{\expe\sigma_0}\right)^{N_0}\int_{-\infty}^{+\infty}\frac{\expe^{-N_0 \frac12 s^2}}{-\im\alpha_0-s}\id s
-\left(\frac{\expe^{\pi\im}N_0}{\expe\sigma_0}\right)^{N_0}\int_{-\infty}^{+\infty} \expe^{-N_0 \frac12 s^2}g(s)\id s,
\end{align*}
where we have used the transformations
\begin{equation*}
\tfrac12s^2=t-1-\ln t,\qquad \frac{\d t}{\d s}=\frac{st}{t-1},\qquad
f(s)=\frac{s+\im\alpha_0}{t-1+\im a_0}\frac{\d t}{\d s},
\qquad g(s)=\frac{f(s)-f(-\im\alpha_0)}{s+\im\alpha_0}
\end{equation*}
in the integrals with respect to $t$ and $s$, the paths pass above the poles located at $t=1-\im a_0$ and $s=\im \alpha_0$, respectively. Note that
\begin{equation*}
f(-\im\alpha_0)=1,\qquad g(0)=\frac1{\im a_0}-\frac1{\im\alpha_0},\qquad
\frac{\expe^{-\sigma_0 z}}{z^{N_0}}\left(\frac{\expe^{\pi\im}N_0}{\expe\sigma_0}\right)^{N_0}=
\expe^{-\frac12\alpha_0^2 N_0},
\end{equation*}
and that
\begin{equation*}
\int_{-\infty}^{+\infty}\frac{\expe^{-N_0 \frac12 s^2}}{-\im\alpha_0-s}\id s=
\pi\im \expe^{\frac12\alpha_0^2 N_0}
\erfc\left(\alpha_0\sqrt{\tfrac{1}{2}N_0}\right).
\end{equation*}
Hence,
\begin{equation}\label{Stokes1Full}
\frac{\expe^{-\sigma_0 z}}{z^{N_0}}F^{(1)}\left(z;\mytop{N_0+1}{\sigma_0}\right)= \pi\im 
\erfc \left(\alpha_0\sqrt{\tfrac{1}{2}N_0}\right)
+\sqrt{\frac{2\pi}{N_0}}
\expe^{-\frac12\alpha_0^2N_0}\left(\im g_0(z)-N_0^{-1}r_0(z)\right),
\end{equation}
with
\begin{equation*}
r_0(z)=\sqrt{\frac{N_0}{2\pi}}\int_{-\infty}^{+\infty} \expe^{-N_0 \frac12 s^2}
\frac{\d}{\d s}\left(\frac{g(s)-g(0)}{s}\right)\id s.
\end{equation*}
 Now, $g(s)$ is analytic and bounded on the real line, and it is analytic with respect to $z$ in the sector where $\left|\arg\left(\expe^{-\pi\im}\sigma_0zN_0^{-1}
\right)\right|<2\pi$. Consequently, \eqref{Stokes1Full} holds when $\Re(N_0)>0$ and $\left|\arg\left(\expe^{-\pi\im}\sigma_0zN_0^{-1}
\right)\right|<2\pi$. Furthermore, $r_0(z)=\bigO(1)$ as $\Re(N_0)\to+\infty$.
\end{proof}

For a simple approximation applicable away from the Stokes line, we combine the first displayed expression from the proof above with Stirling's approximation. This yields
\begin{gather}\label{simplestokes1}
\begin{split}
\frac{\expe^{-\sigma_0 z}}{z^{N_0}}F^{(1)}\left(z;\mytop{N_0+1}{\sigma_0}\right)&
=\frac{\expe^{-\sigma_0 z}}{z^{N_0}}\left(\frac{\expe^{\pi\im}N_0}{\sigma_0}\right)^{N_0}
\int_0^{+\infty}\frac{\expe^{-N_0 t}t^{N_0}}{1-\im a_0-t}\id t\\
&\sim\frac{\expe^{-\sigma_0 z}}{z^{N_0}}\left(\frac{\expe^{\pi\im}N_0}{\sigma_0}\right)^{N_0}
\int_0^{+\infty}\frac{\expe^{-N_0 t}t^{N_0}}{-\im a_0}\id t\\
&\sim -\sqrt{\frac{2\pi}{N_0}}\frac{\expe^{-\frac12\alpha_0^2(z)N_0}}{1+\frac{\sigma_0 z}{N_0}},
\end{split}
\end{gather}
as $\Re(N_0)\to+\infty$, provided that $\left|\arg\left(\sigma_0zN_0^{-1}
\right)\right|\le \pi-\delta$ ($<\pi$).

Similarly, by utilising the first integral representation in \S\ref{sigma10}, we obtain
\begin{equation}\label{simplestokes2}
    \frac{\expe^{-(\sigma_0+\sigma_1) z}}{z^{N_0+N1}}
    F^{(2)}\left(z;\mytop{N_0+1,}{\sigma_0,}\mytop{N_1+1}{\sigma_1}\right)
    \sim\frac{2\pi}{\sqrt{N_0 N_1}}
    \frac{\expe^{-\frac12\alpha_0^2(z)N_0-\frac12\alpha_1^2(z)N_1}}{%
    \left(1+\frac{\sigma_0 z}{N_0}\right)\left(1-\frac{\sigma_1N_0}{\sigma_0N_1}\right)},
\end{equation}
as $\Re(N_0), \Re(N_1)\to+\infty$, provided that $\left|\arg\left(\sigma_0zN_0^{-1}
\right)\right|\le \pi-\delta$ ($<\pi$) and $\left|\arg\left(\expe^{-\pi \im}\frac{\sigma_1N_0}{\sigma_0N_1}\right)\right|\le \pi-\delta$ ($<\pi$). Thus, \eqref{simplestokes2} holds away from the potential Stokes phenomena associated with this second hyperterminant.

\subsection{The higher-order Stokes phenomena arising from $F^{(2)}$ when $\arg(\sigma_0z)\approx\pi$}\label{higherStokes1}
When $\arg(\sigma_0z)\approx\pi$ and $\frac{\sigma_0N_1}{\sigma_1N_0}$ is bounded away from the positive real axis, we
can combine \eqref{flip} with \eqref{Stokes1} to obtain
\begin{gather}
\begin{split}\label{Stokes2aa}
F^{(2)}\left(z;\mytop{N_0+1,}{\sigma_0,}\mytop{N_1+1}{\sigma_1}\right)\sim \; &
\pi\im \expe^{\sigma_0 z} z^{N_0}
\erfc\left(\alpha_0\sqrt{\tfrac{1}{2}N_0}\right)F^{(1)}\left(z;\mytop{N_1+1}{\sigma_1}\right)\\  
& +\im\sqrt{\frac{2\pi}{N_0}}g_0(z)
\expe^{-\frac12\alpha_0^2(z)N_0}
\expe^{\sigma_0 z} z^{N_0}F^{(1)}\left(z;\mytop{N_1+1}{\sigma_1}\right)\\  
& -F^{(2)}\left(z;\mytop{N_1+1,}{\sigma_1,}\mytop{N_0+1}{\sigma_0}\right),
\end{split}
\end{gather}
as $\Re(N_0)\to +\infty$, uniformly with respect to $\left|\arg\left(\expe^{-\pi\im}\sigma_0zN_0^{-1}
\right)\right|\le 2\pi-\delta$ ($<2\pi$). This formula represents the switching on of the term
$2\pi\im \expe^{\sigma_0 z} z^{N_0}F^{(1)}\left(z;\mytop{N_1+1}{\sigma_1}\right)$;
compare with \eqref{connect2}. To derive an approximation in terms of 
$F^{(1)}\left(z;\mytop{N_1+1}{\sigma_1}\right)$, we use \eqref{simplestokes1} and \eqref{simplestokes2} for the final term in \eqref{Stokes2aa}, resulting in
\begin{multline*}
 F^{(2)}\left(z;\mytop{N_0+1,}{\sigma_0,}\mytop{N_1+1}{\sigma_1}\right)\sim 
\pi\im \expe^{\sigma_0 z} z^{N_0}
\erfc\left(\alpha_0\sqrt{\tfrac{1}{2}N_0}\right)F^{(1)}\left(z;\mytop{N_1+1}{\sigma_1}\right)\\  
 +\sqrt{\frac{2\pi}{N_0}}
\expe^{\sigma_0z-\frac12\alpha_0^2(z)N_0}z^{N_0}
\left(\im g_0(z)+\left(1-\frac{\sigma_0N_1}{\sigma_1N_0}\right)^{-1}\right)
F^{(1)}\left(z;\mytop{N_1+1}{\sigma_1}\right).
\end{multline*}
as $\Re(N_0), \Re(N_1)\to+\infty$, provided that $\left|\arg\left(\expe^{-\pi\im}\sigma_0zN_0^{-1}
\right)\right|\le 2\pi-\delta$ ($<2\pi$), $\left|\arg\left(\sigma_1zN_1^{-1}
\right)\right|\le \pi-\delta$ ($<\pi$) and $\left|\arg\left(\expe^{-\pi \im}\frac{\sigma_0N_1}{\sigma_1N_0}\right)\right|\le \pi-\delta$ ($<\pi$). This approximation breaks down when $\frac{\sigma_0N_1}{\sigma_1N_0}$ approaches the positive 
real axis.

\subsection{The higher-order Stokes phenomenon for $F^{(2)}$ when $\arg \sigma_0\approx \arg \sigma_1$}\label{sigma10}

In this subsection, we consider the case where $\arg \sigma_0\approx \arg \sigma_1$ and 
$\sigma_0z$ is bounded away from the negative real axis. We present the integral representation \eqref{newintegral} as follows:
\begin{equation}
  \begin{aligned}\label{sigma1int2}
 F^{(2)}\left(z;\mytop{N_0+1,}{\sigma_0,}\mytop{N_1+1}{\sigma_1}\right)=
-F^{(1)}\left(z;\mytop{N_0+N_1+1}{\sigma_0+\sigma_1}\right)\frac{\left(\sigma_0+\sigma_1\right)^{N_0+N_1+1}}{\sigma_0^{N_0+1}\sigma_1^{N_1}}
I\left(\frac{\sigma_1}{\sigma_0}\right)\\
-\frac{\Gamma(N_1+1)}{\left(\expe^{-\pi \im}\sigma_1\right)^{N_1}}\expe^{(\sigma_0+\sigma_1) z}z^{N_0+N_1}
\int_z^\infty \expe^{-(\sigma_0+\sigma_1)t}t^{-N_0-N_1-1}
F^{(1)}\left(t;\mytop{N_0+1}{\sigma_0}\right)\id t,
\end{aligned}
\end{equation}
where the integral $I(\beta)$ has a pole near a saddle point, resulting in an error function behaviour, as follows.

\begin{theorem}\label{hypergeom}
Assume that both $\beta$ and its reciprocal are bounded, and that $\left|\arg\beta\right| < \pi$. Define
\begin{equation}\label{Ibeta}
I(\beta)=\frac{\Gamma(N_0+1)\Gamma(N_1+1)}{\Gamma(N_0+N_1+2)}
\genhyperF{2}{1}{1}{N_0+1}{N_0+N_1+2}{1+\beta}.
\end{equation}
Let $s^* = \frac{N_1}{N_0}$, and assume that both $s^*$ and its reciprocal are bounded. Then, we have
\begin{equation}\label{Ibetaasym}
I(\beta)\sim \frac{\beta^{N_1}}{\left(1+\beta\right)^{N_0+N_1+1}}\pi\im\erfc\left(\gamma(\beta)\sqrt{\tfrac{1}{2}N_1}\right)
+g(0,\beta)\frac{N_0^{N_0}N_1^{N_1}}{\left(N_0+N_1\right)^{N_0+N_1}}\sqrt\frac{2\pi}{N_1},
\end{equation}
as $\Re(N_1)\to+\infty$. Here,
\begin{equation}\label{gamma}
\tfrac12\gamma^2(\beta)=\left(1+\frac{1}{s^*}\right)\ln\left(\frac{1+s^*}{1+\beta}\right)-\ln\left(\frac{s^*}{\beta}\right),
\end{equation}
with the branch specified by
$\gamma\left( s^*\expe^{\im\nu}\right)\sim\nu/\sqrt{1+s^*}$ 
as $\nu\to0$. Furthermore,
\begin{equation}\label{g0beta}
g(0,\beta)=\frac{\left(1+s^*\right)^{-1/2}}{1-\frac{\beta}{s^*}}
-\frac{\im}{(1+\beta)\gamma(\beta)}.
\end{equation}
\end{theorem}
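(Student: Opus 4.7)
The strategy is to reduce $I(\beta)$ to a pole-near-saddle integral and then apply the Bleistein--Temme decomposition in exactly the form used in the proof of Theorem \ref{normalStokes}. Euler's integral representation for ${}_2F_1$ causes the gamma prefactors in \eqref{Ibeta} to cancel, leaving
\begin{equation*}
I(\beta)=\int_0^1\frac{t^{N_0}(1-t)^{N_1}}{1-(1+\beta)t}\id t=\int_0^1\frac{\expe^{N_1\psi(t)}}{1-(1+\beta)t}\id t,
\end{equation*}
where $\psi(t)=(1/s^*)\ln t+\ln(1-t)$. The integrand has a unique interior saddle at $t_*=1/(1+s^*)=N_0/(N_0+N_1)$ and a simple pole at $t_p=1/(1+\beta)$; the two coalesce exactly when $\beta=s^*$, which is the origin of the higher-order Stokes transition under study.

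Introduce the saddle-point substitution $\tfrac12\tau^2=\psi(t_*)-\psi(t)$, with the branch of $\tau$ fixed so that $\tau$ is real on the steepest-descent image of $(0,1)$. A direct computation gives $\tfrac12\tau_p^2=\psi(t_*)-\psi(t_p)=-\tfrac12\gamma^2(\beta)$, so the pole is carried to $\tau_p=-\im\gamma(\beta)$ with the branch of $\gamma$ specified in the theorem (consistent with the small-$\nu$ expansion $\tau_p\sim-\im\nu/\sqrt{1+s^*}$). Following the proof of Theorem \ref{normalStokes}, decompose
\begin{equation*}
\frac{1}{1-(1+\beta)t(\tau)}\frac{\id t}{\id\tau}=\frac{A}{\tau-\tau_p}+h(\tau),\qquad A=-\frac{1}{1+\beta},
\end{equation*}
where $A$ is the residue at $\tau=\tau_p$ (read off from the simple pole of $1-(1+\beta)t$ in $t$) and $h(\tau)$ is analytic in a uniform complex strip about the $\tau$-contour.

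Evaluate the pole part by the identity from the proof of Theorem \ref{normalStokes},
\begin{equation*}
\int_{-\infty}^{+\infty}\frac{\expe^{-\frac12 N_1\tau^2}}{\tau-\tau_p}\id\tau=-\pi\im\,\expe^{\frac12 N_1\gamma^2(\beta)}\erfc\!\left(\gamma(\beta)\sqrt{\tfrac12 N_1}\right),
\end{equation*}
and the regular part by Laplace's method, giving $\int h(\tau)\expe^{-N_1\tau^2/2}\id\tau\sim h(0)\sqrt{2\pi/N_1}+\bigO(N_1^{-3/2})$. Multiplying through by the saddle prefactor $\expe^{N_1\psi(t_*)}=N_0^{N_0}N_1^{N_1}/(N_0+N_1)^{N_0+N_1}$ reproduces the coefficient in front of the $g(0,\beta)$ term in \eqref{Ibetaasym}. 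A short logarithmic identity $\expe^{N_1\psi(t_*)}\expe^{\frac12 N_1\gamma^2(\beta)}=\beta^{N_1}/(1+\beta)^{N_0+N_1}$, combined with $A=-1/(1+\beta)$, converts the pole contribution into $\pi\im\beta^{N_1}(1+\beta)^{-N_0-N_1-1}\erfc(\gamma(\beta)\sqrt{N_1/2})$. Finally, $h(0)$ is the saddle value $(1+s^*)^{-1/2}(1-\beta/s^*)^{-1}$ of $(1-(1+\beta)t(\tau))^{-1}(\id t/\id\tau)$ (using $\id t/\id\tau|_{\tau=0}=s^*(1+s^*)^{-3/2}$) minus the subtracted pole contribution $A/(-\tau_p)=\im/((1+\beta)\gamma(\beta))$, matching \eqref{g0beta} exactly.

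The main obstacle is uniformity in $\beta$ throughout $|\arg\beta|<\pi$. Three ingredients need care: (i) the substitution $\tau(t)$ must be univalent in a $\beta$-independent complex neighborhood of $[0,1]$, which requires a mild analytic-geometry argument; (ii) when $\beta$ is complex the pole $\tau_p$ is off the real axis and the $\tau$-contour must be deformed onto a Hankel-type path avoiding it, so the $\erfc$ identity above is used in its generalized contour-deformed form rather than its real-line specialisation; and (iii) the branch of $\gamma(\beta)$ must be tracked consistently so that $\tau_p=-\im\gamma(\beta)$ continues to hold, with the sector estimate $|\arg(d_1^2)|\le|\arg v|$ recorded in \S\ref{Notation} being of the same flavour. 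Once these uniform analytic properties are established, the standard $\bigO(N_1^{-3/2})$ remainder for Laplace's method applied to $h(\tau)$ delivers \eqref{Ibetaasym}.
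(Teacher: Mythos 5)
Your proposal is correct and, once the cosmetic change of variables $t=1/(1+s)$ is made, is line-for-line the same argument as the paper's: the paper starts from the same Euler $[0,1]$ integral, substitutes to $[0,\infty)$, introduces the same saddle coordinate and the same pole-plus-regular decomposition $\frac{f(x)}{x-\im\gamma}=\frac{(1+\beta)^{-1}}{x-\im\gamma}+g(x,\beta)$, evaluates the pole part with the complementary-error-function identity from the proof of Theorem~\ref{normalStokes}, and applies Laplace's method to the regular part. Your branch bookkeeping ($\tau_p=-\im\gamma$ vs.\ the paper's $x_p=\im\gamma$, with $\tau=-x$), your residue $A=-1/(1+\beta)$, your logarithmic identity $\expe^{N_1\psi(t_*)+\frac12N_1\gamma^2}=\beta^{N_1}(1+\beta)^{-N_0-N_1}$, and your computation of $h(0)$ all check out and reproduce \eqref{Ibetaasym} and \eqref{g0beta} exactly.
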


Observe that $\gamma(\beta)$ is a univalent analytic function of $\beta$ within the sector $\left|\arg \beta\right|<\pi$. Additionally, the condition that both $s^*$ and its reciprocal are bounded implies that the validity of the asymptotics \eqref{Ibetaasym} necessitates that both $\Re(N_1)$ and $\Re(N_0)$ be large.

\begin{proof}
Initially, we assume that $\arg\beta\in(0,\pi)$.
We follow the method outlined in \cite[\S21.1]{Temme2015}. By making a simple change of integration variables in the standard integral representation of the hypergeometric function \cite[ \href{http://dlmf.nist.gov/15.6.E1}{Eq. (15.6.1)}]{NIST:DLMF}, we can write
\begin{equation*}
I(\beta)=\int_0^{+\infty}\frac{\expe^{-N_1 h(s)}}{(s-\beta)(1+s)}\id s,
\end{equation*}
with
\begin{equation*}
h(s)=\left(1+\frac{N_0}{N_1}\right)\ln(1+s)-\ln s,\qquad h'(s)=\frac{\frac{N_0}{N_1}s-1}{s(1+s)}.
\end{equation*}
Thus, there is a saddle point at $s^*=\frac{N_1}{N_0}$. Using the substitution
\begin{equation*}
\tfrac12 x^2=h(s)-h(s^*),\qquad \frac{\id s}{\id x}=\frac{x s (1+s)}{\frac{N_0}{N_1}s-1},
\end{equation*}
we obtain the canonical integral representation
\begin{equation}\label{canonicalIb}
I(\beta)=\frac{N_0^{N_0}N_1^{N_1}}{\left(N_0+N_1\right)^{N_0+N_1}}\int_{-\infty}^{+\infty} \frac{\expe^{-N_1\frac12  x^2}f(x)}{x-\im\gamma}\id x,
\end{equation}
with
\begin{equation*}
f(x)=\frac{x-\im\gamma}{(s-\beta)(s+1)}\frac{\d s}{\d x},
\end{equation*}
and
\begin{equation*} 
\tfrac12\gamma^2=h(s^*)-h(\beta)=\left(1+\frac{1}{s^*}\right)\ln\left(\frac{1+s^*}{1+\beta}\right)-\ln\left(\frac{s^*}{\beta}\right).
\end{equation*}
It is straightforward to show that $f(\im\gamma)=\left(1+\beta\right)^{-1}$. We introduce the function
\begin{equation*} 
g(x,\beta)=\frac{f(x)-f(\im\gamma)}{x-\im\gamma},\qquad g(0,\beta)=\frac{\left(1+s^*\right)^{-1/2}}{1-\frac{\beta}{s^*}}-\frac{\im}{\gamma(1+\beta)},
\end{equation*}
and use $\frac{f(x)}{x-\im\gamma}=\frac{1}{(1+\beta)(x-\im\gamma)}+g(x,\beta)$ in \eqref{canonicalIb}
to obtain
\begin{equation}\label{Ialtform}
I(\beta) = \frac{\beta^{N_1}}{\left(1 + \beta\right)^{N_0 + N_1 + 1}} \pi \im \erfc \left( \gamma \sqrt{\tfrac{1}{2}N_1} \right) + \frac{N_0^{N_0} N_1^{N_1}}{\left(N_0 + N_1\right)^{N_0 + N_1}} \int_{-\infty}^{+\infty} \expe^{-N_1 \frac{1}{2} x^2} g(x, \beta) \id x.
\end{equation}
Note that $\lim_{|\gamma|\to 0} g(0,\beta)=\frac{1-s^*}{3\left(1+s^*\right)^{3/2}}$. Thus, $g(x, \beta)$ is analytic and bounded on the real $x$-line, and it is analytic with respect to $\beta$ in
the sector where $\left|\arg \beta\right|<\pi$. Consequently, \eqref{Ialtform} holds when $\Re(N_1)>0$ and $\left|\arg \beta\right|<\pi$. Furthermore,
\[
\int_{ - \infty }^{ + \infty } \expe^{ - N_1 \frac{1}{2}x^2 } g(x,\beta )\id x \sim g(0,\beta )\sqrt {\frac{2\pi}{N_1 }} 
\]
as $\Re(N_1)\to+\infty$.
\end{proof}

We still need to derive an approximation for the final integral in \eqref{sigma1int2}. We consider the case where $\Re(N_0),\Re(N_1)\to +\infty$ and $\left|\arg(\sigma_0zN_0^{-1}\right|
\le\pi-\delta$ ($<\pi$). Then,
\begin{align*}
    \int_z^\infty &\expe^{-(\sigma_0+\sigma_1)t}t^{-N_0-N_1-1}
    F^{(1)}\left(t;\mytop{N_0+1}{\sigma_0}\right)\id t\\
    & \sim -\frac{\sqrt{2\pi}N_0^{N_0-\frac12}\expe^{-N_0}}{\left(\expe^{-\pi \im}\sigma_0\right)^{N_0}}
    \int_z^\infty \frac{\expe^{-(\sigma_0+\sigma_1)t}t^{-N_0-N_1-1}}{1+\frac{\sigma_0 t}{N_0}}\id t\\
     & \sim -\frac{\sqrt{2\pi}N_0^{N_0-\frac12}\expe^{-N_0}}{\left(\expe^{-\pi \im}\sigma_0\right)^{N_0}}
    \int_z^\infty \frac{\expe^{-(\sigma_0+\sigma_1)t}t^{-N_0-N_1-1}}{1+\frac{\sigma_0 z}{N_0}}\id t\\
    & =\frac{\sqrt{2\pi}N_0^{N_0-\frac12}\expe^{-N_0}}{\left(\expe^{-\pi \im}\sigma_0\right)^{N_0}}
    \frac{\left(\expe^{-\pi \im}(\sigma_0+\sigma_1)\right)^{N_0+N_1}\expe^{-(\sigma_0+\sigma_1)z}}{z^{N_0+N_1}
\Gamma(N_0+N_1+1)\left(1+\frac{\sigma_0 z}{N_0}\right)}
    F^{(1)}\left(z;\mytop{N_0+N_1+1}{\sigma_0+\sigma_1}\right)\\
    & \sim\frac{\left(\expe^{-\pi \im}(\sigma_0+\sigma_1)\right)^{N_0+N_1}\Gamma(N_0)}{\left(\expe^{-\pi \im}\sigma_0\right)^{N_0}
\Gamma(N_0+N_1+1)\left(1+\frac{\sigma_0 z}{N_0}\right)} \frac{\expe^{-(\sigma_0+\sigma_1)z}}{z^{N_0+N_1}}
    F^{(1)}\left(z;\mytop{N_0+N_1+1}{\sigma_0+\sigma_1}\right).
\end{align*}
In the first step, we used \eqref{simplestokes1}; in the second step, we considered the main contribution coming from $t=z$; in the third step, we identified the integral first in terms of incomplete gamma functions and then in terms of the first hyperterminant; and, finally, we used Stirling's formula for $\Gamma(N_0)$. This approximation breaks down when $\frac{\sigma_0z}{N_0}$ approaches the negative real axis.

Combining the results above with Stirling's formula, we obtain the following approximation:
\begin{gather}\begin{split}\label{sigma1int3}
\frac{F^{(2)}\left(z;\mytop{N_0+1,}{\sigma_0,}\mytop{N_1+1}{\sigma_1}\right)}{    F^{(1)}\left(z;\mytop{N_0+N_1+1}{\sigma_0+\sigma_1}\right)}\sim 
    -\pi\im\erfc&\left(\gamma\big(\tfrac{\sigma_1}{\sigma_0}\big)\sqrt{\tfrac{1}{2}N_1}\right)\\
    -\sqrt{\frac{2\pi}{N_0}}\frac{\left(\frac{\sigma_0+\sigma_1}{N_0+N_1}\right)^{N_0+N_1}}{
    \left(\frac{\sigma_0}{N_0}\right)^{N_0}\left(\frac{\sigma_1}{N_1}\right)^{N_1}}&
    \left(g\big(0,\tfrac{\sigma_1}{\sigma_0}\big)\left(1+\tfrac{\sigma_1}{\sigma_0}\right)\sqrt{\frac{N_0}{N_1}}+\frac{\sqrt{N_1/(N_0+N_1)}}{1+\frac{\sigma_0z}{N_0}}
    \right)
\end{split}\end{gather}
which signifies the switching on of $-2\pi\im F^{(1)}\left(z;\mytop{N_0+N_1+1}{\sigma_0+\sigma_1}\right)$. 
Compare with \eqref{connect3}. 
In the case where $N_0=N_1=N$ and $\sigma_0=\sigma_1=\sigma$, the first term on the right-hand side simplifies to $-\pi\im$.

In Figure \ref{figHigher1F2Stokes}, we illustrate the switching on of $-2\pi\im F^{(1)}\left(z;\mytop{N_0+N_1+1}{\sigma_0+\sigma_1}\right)$
as $\theta=\arg\big(\frac{\sigma_1}{\sigma_0}\big)$ transitions from positive to negative values.

\subsection{The full uniform higher-order Stokes phenomenon}\label{sectcoll}

In this subsection, the main result is Theorem \ref{simpledoublesmoothingthm0}, which gives an approximation allowing both $\arg(\sigma_0 z) \approx \pi$ and $\arg\sigma_0 \approx \arg\sigma_1$. This theorem is expressed in terms of the new special function $\Erfc(x;y;\lambda)$. Results for the extreme collinear case, $\frac{\sigma_1}{\sigma_0} = \frac{N_1}{N_0}$, are presented in Theorem \ref{simpledoublesmoothingthm0extreme} and Corollary \ref{DoubleStokesMultiplier}. Lemma \ref{simpledoublesmoothingthm} is a preliminary version of Theorem \ref{simpledoublesmoothingthm0}; however, providing a proof for this lemma is convenient, allowing us then to deduce Theorem \ref{simpledoublesmoothingthm0}. We use the notation from \S\ref{Notation}.

\begin{theorem}\label{simpledoublesmoothingthm0}
Let $\sigma_0$, $\sigma_1$, $\frac{\sigma_0 z}{N_0}$, $\frac{\sigma_1 z}{N_1}$ and their reciprocals be bounded. Additionally, we impose the constraints $\left| \arg N_j  \right| \le \varepsilon \le \frac{\pi}{2} - \delta$ ($< \frac{\pi}{2}$) for $j=0,1$, $\left| \arg \left( \frac{\sigma _0 N_1}{\sigma _1 N_0} \right) \right| \le \pi  - 2\varepsilon  - 2\delta$ and
$\left|\arg\left(\expe^{-\frac{\pi}{2}\im}\sigma_jz\right)\right| \le\pi-\delta$ ($<\pi$) for $j=0,1,2$. Then
\begin{align*}
&{\color{black}\frac{\expe^{- (\sigma_0+\sigma_1)z}}{z^{N_0+N_1}}
F^{(2)}\left(z;\mytop{N_0+1,}{\sigma_0,}\mytop{N_1+1}{\sigma_1}\right)}=\\
& 
{\color{blue}-\frac{\sigma_2^{N_2+1}}{\sigma_0^{N_0+1}
\sigma_1^{N_1}}\frac{\Gamma(N_0+1)\Gamma(N_1+1)}{\Gamma(N_2+2)}
\genhyperF{2}{1}{1}{N_0+1}{N_2+2}{1+\frac{\sigma_1}{\sigma_0}}
\frac{\expe^{-(\sigma_0+\sigma_1)z}}{z^{N_0+N_1}}
F^{(1)}\left(z;\mytop{N_0+N_1+1}{\sigma_0+\sigma_1}\right)}\\
& {\color{red}-\pi^2
\Erfc\left(d_1\alpha_0(z)\sqrt{\tfrac12N_1};d_1\alpha_0(\zeta_1)\sqrt{\tfrac12N_1};
d_1^{-1}\sqrt{\frac{N_0}{N_1}}\right)}+R^{(1)}(z),
\end{align*}
where $R^{(1)}(z)=\bigO{\left(\AbsErfc N_1^{-1/2}\right)}$ as $\Re(N_1)\to+\infty$. 
An additional term in the approximation is
\begin{align*}
R^{(1)}(z)=&-d_2(z)\pi\sqrt{\frac{2\pi}{N_1}}\expe^{-\frac12\alpha_1^2(z)N_1}
\erfc\left(\alpha_0(z)\sqrt{\tfrac{1}{2}N_0}\right)\\
&  +\left(\frac{d_2(z) \alpha_0'(\zeta_2)}{\sigma_2}\sqrt{2\pi\frac{N_0N_2}{N_1}}
+\im g_0(\zeta_2)\sqrt{\frac{2\pi N_1}{N_0N_2}}\right)
\expe^{\frac12\alpha_2^2(z)N_2-\frac12\alpha_0^2(z)N_0-\frac12\alpha_1^2(z)N_1}\\
&\qquad\qquad\qquad\qquad\qquad\qquad\qquad\qquad\qquad\qquad
\times\frac{\expe^{-(\sigma_0+\sigma_1)z}}{z^{N_0+N_1}}
F^{(1)}\left(z;\mytop{N_0+N_1+1}{\sigma_0+\sigma_1}\right)\\
& +\frac{2\pi\im}{\sigma_2}
\sqrt{\frac{N_1}{N_0}} \frac{g_0(z)-g_0(\zeta_2)}{z-\zeta_2}
\expe^{-\frac12\alpha_0^2(z)N_0-\frac12\alpha_1^2(z)N_1}
+\bigO{\left(\AbsErfc N_1^{-1}\right)},
\end{align*}
where
\begin{gather}\label{ERFCdef}
  \begin{split}
\AbsErfc=  & \left|\Erfc\left(d_1\alpha_0(z)  \sqrt{\tfrac{1}{2}N_1};
d_1\alpha_0  (\zeta_1)\sqrt{\tfrac{1}{2}N_1};d_1^{-1}\sqrt{\frac{N_0}{N_1}}\right)\right|\\  & +
\left|\erfc\left(\alpha_2(z)\sqrt{\tfrac{1}{2}N_2}\right)\right|
+\frac1{\sqrt{N_2}}\left|  \expe^{-\frac{1}2\alpha_0^2(z)N_0-\frac{1}2\alpha_1^2(z)N_1}\right|.
\end{split}
\end{gather}
\end{theorem}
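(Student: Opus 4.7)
The starting point is the integral representation \eqref{sigma1int2}, which decomposes the second hyperterminant as a sum of two contributions. The first contribution, $-F^{(1)}\left(z;\mytop{N_0+N_1+1}{\sigma_0+\sigma_1}\right)\frac{(\sigma_0+\sigma_1)^{N_0+N_1+1}}{\sigma_0^{N_0+1}\sigma_1^{N_1}}I\left(\frac{\sigma_1}{\sigma_0}\right)$, produces the blue ${}_2F_1$ term on the right-hand side of the theorem \emph{verbatim} once we recall the definition \eqref{Ibeta} of $I(\beta)$. No asymptotics is required here: this term is kept exactly. All the analytical work goes into the second contribution, the integral
\[
J(z) \;=\; -\frac{\Gamma(N_1+1)}{(\expe^{-\pi\im}\sigma_1)^{N_1}}\expe^{\sigma_2 z}z^{N_2}\int_z^\infty \expe^{-\sigma_2 t}t^{-N_2-1}F^{(1)}\left(t;\mytop{N_0+1}{\sigma_0}\right)\id t,
\]
and it is here that the new special function $\Erfc$ will appear.

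Next I would insert the smoothed Stokes form of the inner hyperterminant, Theorem \ref{normalStokes}, namely
\[
F^{(1)}\left(t;\mytop{N_0+1}{\sigma_0}\right) = \expe^{\sigma_0 t}t^{N_0}\left[\pi\im\,\erfc\!\left(\alpha_0(t)\sqrt{\tfrac12 N_0}\right) + \im\sqrt{\tfrac{2\pi}{N_0}}g_0(t)\expe^{-\frac12\alpha_0^2(t)N_0} + O(\expe^{-\frac12\alpha_0^2(t)N_0}N_0^{-3/2})\right],
\]
cancel the factor $\expe^{\sigma_0 t}t^{N_0}$ against $\expe^{-\sigma_2 t}t^{-N_2-1}$, and recognise that the remaining kernel $\expe^{-\sigma_1 t}t^{-N_1-1}$ is (up to Stirling-type prefactors) $t^{-1}\expe^{-\frac12\alpha_1^2(t)N_1}$, exhibiting a saddle point at $t=\zeta_1$. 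The leading contribution of $J(z)$ is then
\[
J(z) \;\approx\; -\pi\im\cdot(\text{prefactor})\cdot\int_z^\infty t^{-1}\expe^{-\frac12\alpha_1^2(t)N_1}\erfc\!\left(\alpha_0(t)\sqrt{\tfrac12 N_0}\right)\id t,
\]
and the two remaining terms from Theorem \ref{normalStokes} will ultimately produce the finer correction $R^{(1)}(z)$ via routine (but careful) saddle-point estimates on each.

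The main step is to convert the leading integral to the canonical form defining $\Erfc(x;y;\lambda)$ in \eqref{newserfc0}. I would introduce the variable
\[
\tau = d_1\alpha_0(t)\sqrt{\tfrac12 N_1},\qquad x = d_1\alpha_0(z)\sqrt{\tfrac12 N_1},\qquad y = d_1\alpha_0(\zeta_1)\sqrt{\tfrac12 N_1},
\]
so that the argument of the error function becomes $\alpha_0(t)\sqrt{\tfrac12 N_0} = d_1^{-1}\sqrt{N_0/N_1}\,\tau = \lambda\tau$. The coefficient $d_1$ is designed precisely so that, at leading order in a neighbourhood of $t=\zeta_1$, $\alpha_1(t) = d_1(\alpha_0(t)-\alpha_0(\zeta_1))$, which translates the Gaussian $\expe^{-\frac12\alpha_1^2(t)N_1}$ into $\expe^{-(\tau-y)^2}$. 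A short computation using $\alpha_1'(\zeta_1)=\im/\zeta_1$ and the general formula $\alpha_0'(\zeta_1) = (1/\zeta_1-1/\zeta_0)/\alpha_0(\zeta_1)$ confirms the stated value $d_1=\im\alpha_0(\zeta_1)/(1-\zeta_1/\zeta_0)$. The discrepancy between $\alpha_1^2 N_1/2$ and $(\tau-y)^2$ is captured by the error term $r(z)=\frac12\alpha_1^2(z)-\frac12 d_1^2(\alpha_0(z)-\alpha_0(\zeta_1))^2$; handling this is where $d_2(z)$ enters the subleading correction.

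\textbf{Main obstacle.} The delicate point is \emph{uniformity}: one must control the change of variables $t\mapsto\tau$ not only in a neighbourhood of the saddle $\zeta_1$ but along the full contour from $z$ to $\infty$, and simultaneously as $\alpha_0(z)\to 0$ (the $\arg(\sigma_0z)=\pi$ transition) and as $\alpha_0(\zeta_1)\to 0$ (the $\arg\sigma_0=\arg\sigma_1$ transition, corresponding to $\zeta_0=\zeta_1$). The hypothesis $|\arg(\sigma_0 N_1/\sigma_1 N_0)|\le \pi-2\varepsilon-2\delta$ together with the property $|\arg(d_1^2)|\le|\arg v|$ (from \S\ref{Notation}) ensures that the quadratic transformation remains univalent on the relevant domain. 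The resulting integral is then identified with $-\pi^2\Erfc(x;y;\lambda)$ via \eqref{newserfc0} after absorbing the prefactors using Stirling's formula. Residual terms are collected into $R^{(1)}(z)$: the two explicit subleading contributions come, respectively, from the $g_0$ correction in Theorem \ref{normalStokes} (evaluated at the saddle $\zeta_2$), from the pole-like contribution at $t=z$ after integration by parts, and from the $r(z)$ correction (this is where the factor $d_2(z)\expe^{N_1 r(z)}$ appears). Each is estimated by standard saddle-point bounds, and the overall size $\AbsErfc$ is dictated by the largest of $|\Erfc|$, $|\erfc(\alpha_2\sqrt{N_2/2})|$ and $N_2^{-1/2}|\expe^{-\frac12\alpha_0^2 N_0-\frac12\alpha_1^2 N_1}|$, as required by \eqref{ERFCdef}.
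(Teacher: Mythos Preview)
Your approach is essentially the same as the paper's. Two structural remarks are worth making. First, the paper factors the argument through an intermediate Lemma (Lemma~\ref{simpledoublesmoothingthm}) in which the $\Erfc$ term carries the exact prefactor $\frac{\pi^{3/2}\Gamma(N_1)\expe^{N_1}}{\sqrt{2}N_1^{N_1-1/2}}$ arising naturally from the integral; the Theorem itself is then a one-line consequence via Stirling, $\Gamma(N_j)\expe^{N_j}N_j^{-N_j+1}=\sqrt{2\pi N_j}(1+\bigO(N_1^{-1}))$, applied to all terms (which are each $\bigO(\AbsErfc)$). Second, where you write ``handling this is where $d_2(z)$ enters'' the paper makes this precise via the Bleistein ansatz: after the substitution $\tau=\alpha_0(t)$ one writes the amplitude $\im/t$ as
\[
\frac{\im}{t}=d_1\alpha_0'(t)\expe^{N_1 r(t)}+d_2(z)\alpha_1(t)\alpha_1'(t)+(t-z)\alpha_1(t)\alpha_1'(t)h_1(t),
\]
so that the first term integrates exactly to the $\Erfc$ contribution, the second integrates by parts to the $\erfc(\alpha_0(z)\sqrt{N_0/2})$ correction plus a further integral (itself handled by a second Bleistein step with coefficients $e_1=\alpha_0'(\zeta_2)$, $e_2$), and the third is the remainder $R_1(z)$. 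This is the mechanism that delivers uniformity and the explicit $R^{(1)}(z)$ terms simultaneously, rather than separate saddle-point estimates on each piece.
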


In most applications, $\varepsilon$ will be very small. From the final remarks in \S\ref{Notation}, it follows that under the constraints $\left| \arg N_j  \right| \le \varepsilon \le \frac{\pi}{2} - \delta$ ($< \frac{\pi }{2}$) for $j = 0, 1$ and $\left| \arg \left( \frac{\sigma _0 N_1}{\sigma _1 N_0} \right) \right| \le \pi  - 2\varepsilon  - 2\delta$, we obtain $
\left| \arg \left( d_1^{ - 1} \sqrt {\frac{N_0 }{N_1 }}  \right) \right| \le \frac{\pi}{2} - \delta$ ($< \frac{\pi }{2}$). This is essential because the function $\Erfc(x;y;\lambda)$ has branch cuts along segments of the imaginary axis in the $\lambda$-plane, {\it cf.} Appendix \ref{Newerfc}.

A numerical example is given in Figure \ref{fig2b}.

\begin{figure}[ht]
\centering\includegraphics[width=0.5\textwidth]{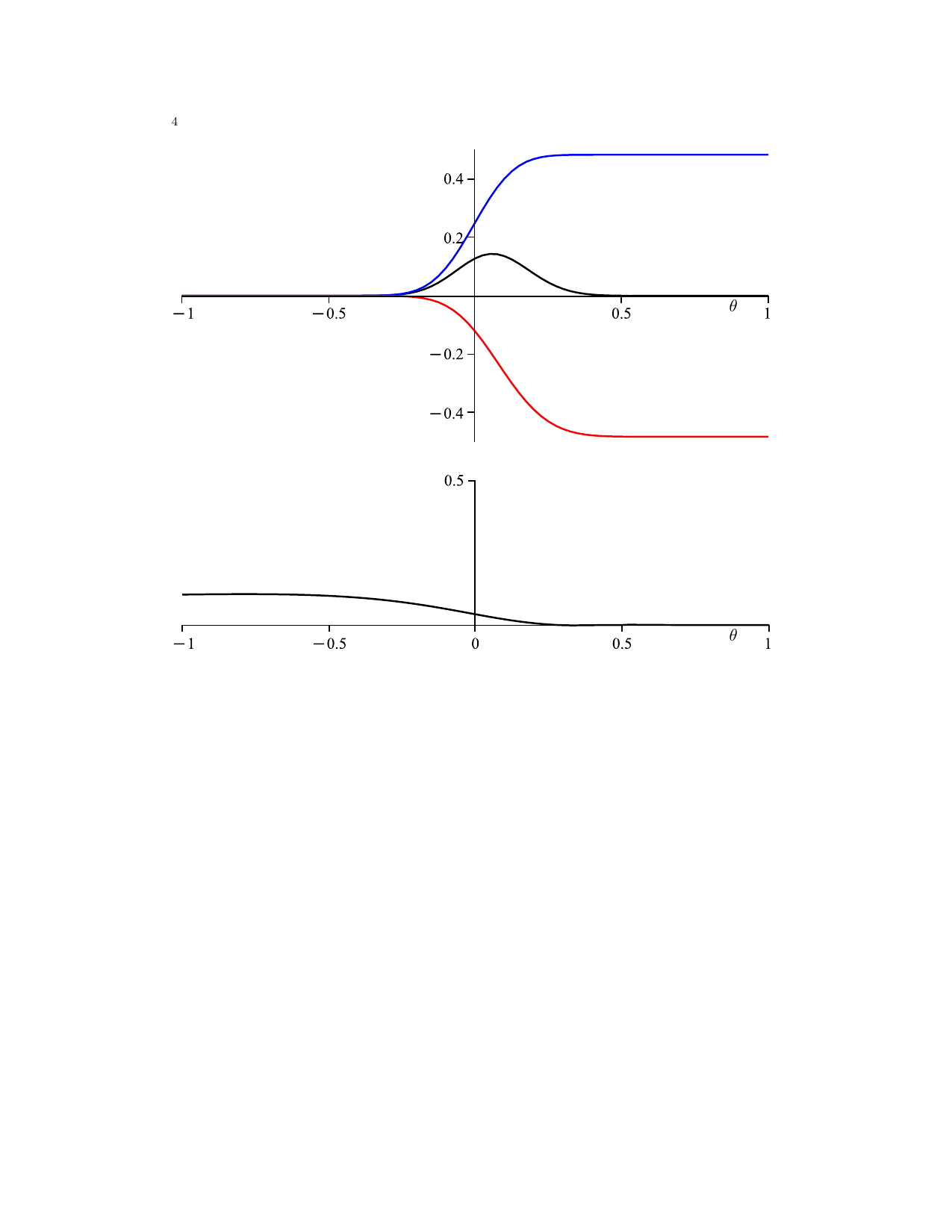}%
\qquad\raisebox{0.105\textwidth}{\includegraphics[width=0.43\textwidth]{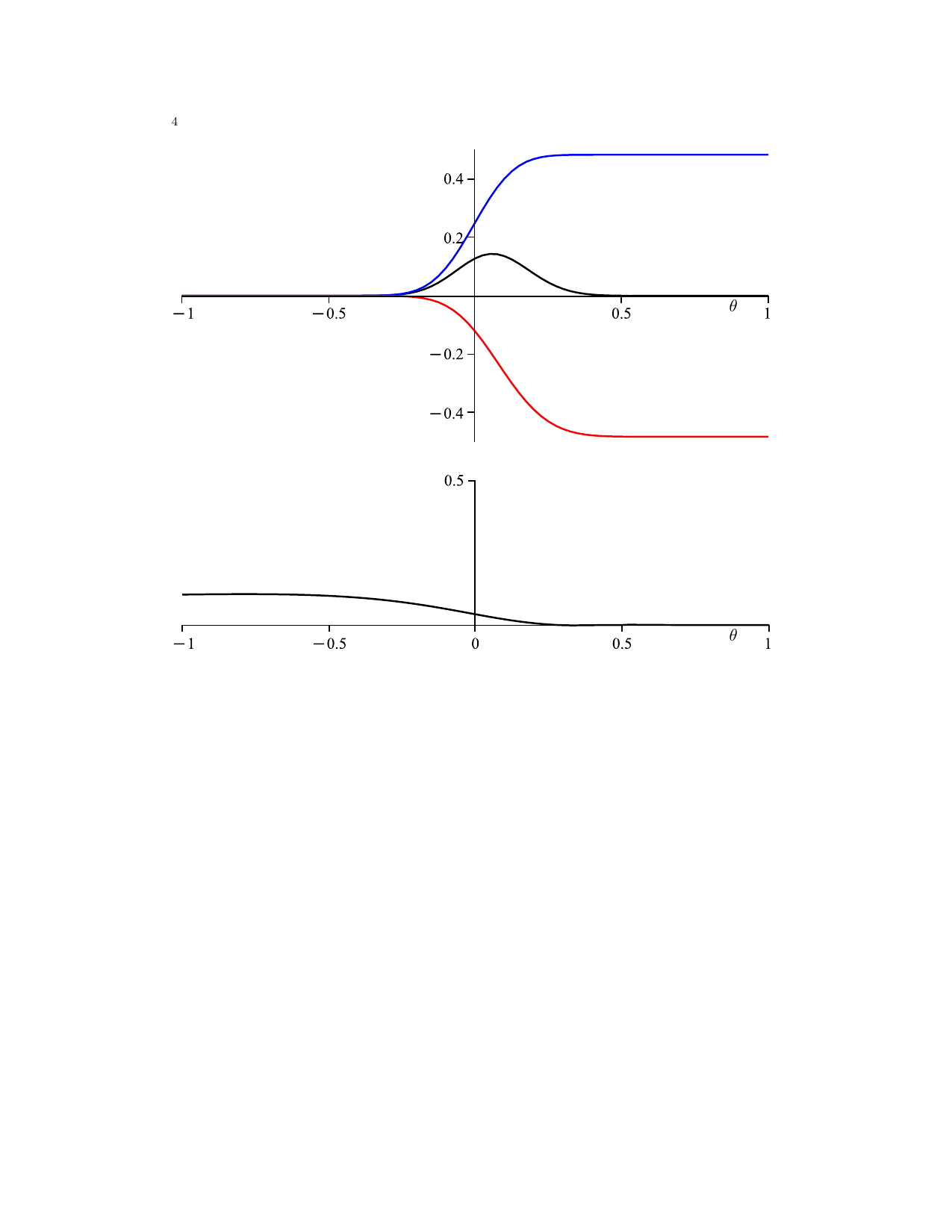}}%
\caption{The case $z=41\expe^{(\pi-0.5+\theta)\im}$, $\sigma_0=1.1\expe^{0.5\im}$, 
$\sigma_1=0.9\expe^{0.51\im}$, $N_0=44.4$, $N_1=37.7$. This results in $\nu=0.01+0.037\im$. 
In the left figure, the vertical axis displays the real part of the terms in the approximation from Theorem \ref{simpledoublesmoothingthm0}, normalised by a factor of $(2\pi)^{-2}$.
In the right figure, the vertical axis shows the absolute error $\left|R^{(1)}(z)\right|$ for this approximation, divided by $\AbsErfc N_1^{-1/2}$.
This demonstrates that the implied constant is uniformly bounded.}\label{fig2b}
\end{figure}

If $\frac{\sigma_1}{\sigma_0} = \frac{N_1}{N_0}$, we can apply Lemma \ref{2F1Lemma} to simplify the results of Theorem \ref{simpledoublesmoothingthm0}, as follows.

\begin{theorem}\label{simpledoublesmoothingthm0extreme}
Let $\sigma_0$, $\sigma_1$, $\frac{\sigma_0 z}{N_0}$ and their reciprocals be bounded. Additionally, we impose the constraints $\left| \arg N_j  \right| \le  \frac{\pi}{2} - \delta$ ($< \frac{\pi }{2}$) for $j=0,1$ and
$\left|\arg\left(\expe^{-\frac{\pi}{2}\im}\sigma_jz\right)\right| \le\pi-\delta$ ($<\pi$) for $j=0,1,2$. In the extreme collinear case where $\frac{\sigma_1}{\sigma_0}=\frac{N_1}{N_0}$, we have
\begin{align*}
\frac{\expe^{-(\sigma_0+\sigma_1)z}}{z^{N_0+N_1}}
F^{(2)}\left(z;\mytop{N_0+1,}{\sigma_0,}\mytop{N_1+1}{\sigma_1}\right)=\; &
{\color{blue}\pi^2\erfc\left(\alpha_0(z)\sqrt{\tfrac{1}{2}N_2}\right)}
{\color{red}-\pi^2
\Erfc\left(\alpha_0(z)\sqrt{\tfrac12N_1};0;\sqrt{\frac{N_0}{N_1}}\right)}\\
& +R^{(1)}(z),
\end{align*}
where $R^{(1)}(z)=\bigO{\left(\AbsErfc N_1^{-1/2}\right)}$ as $\Re(N_1)\to+\infty$.
An additional term in the approximation is
\begin{align*}
R^{(1)}(z)=\; &\left(\tfrac13\im-g_0(z)\right)\pi\sqrt{\frac{2\pi N_0}{N_1 N_2}}
\erfc\left(\alpha_0(z)\sqrt{\tfrac{1}{2}N_2}\right)-g_0(z)\pi\sqrt{\frac{2\pi}{N_1}}\expe^{-\frac12\alpha_0^2(z)N_1}
\erfc\left(\alpha_0(z)\sqrt{\tfrac{1}{2}N_0}\right)\\
& +\left(\frac{2\pi\im}{\sigma_2}\sqrt{\frac{N_1}{N_0}}
\frac{g_0(z)-\frac13\im}{z-\zeta_2}+\pi g_0(z)\sqrt{\frac{2\pi}{N_2}}\right)
\expe^{-\frac12\alpha_0^2(z)N_2}
+\bigO{\left(\AbsErfc N_1^{-1}\right)},
\end{align*}
where
\begin{equation*}
\AbsErfc=\left|\Erfc\left(\alpha_0(z)\sqrt{\tfrac{1}{2}N_1};
    0;\sqrt{\frac{N_0}{N_1}}\right)\right|+
    \left|\erfc\left(\alpha_0(z)\sqrt{\tfrac{1}{2}N_2}\right)\right|
    +\frac1{\sqrt{N_2}}\left|  \expe^{-\frac{1}2\alpha_0^2(z)N_2}\right|.
\end{equation*}
\end{theorem}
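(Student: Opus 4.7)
The plan is to specialise the uniform approximation of Theorem \ref{simpledoublesmoothingthm0} to the extreme collinear configuration $\sigma_1/\sigma_0=N_1/N_0$, and then to invoke Lemma \ref{2F1Lemma} (which handles the hypergeometric factor at the confluent parameter $\beta=s^*=N_1/N_0$) together with the uniform expansion of $F^{(1)}$ from Theorem \ref{normalStokes} to collapse the blue term of the general result into the stated $\pi^2\erfc\left(\alpha_0(z)\sqrt{\tfrac12 N_2}\right)$.

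First I would record the book-keeping simplifications forced by having $\sigma_j/N_j=c$ independent of $j$. From the definitions in \S\ref{Notation} these give $\zeta_0=\zeta_1=\zeta_2=-1/c$ and $\alpha_0(z)=\alpha_1(z)=\alpha_2(z)$, and because $\sigma_0\zeta_1/N_0=-1$ here, also $\alpha_0(\zeta_1)=0$. The ratio $v=\sigma_0 N_1/(\sigma_1 N_0)$ equals $1$, so a Taylor expansion of $d_1^2=-2(1-v+\ln v)/(1-v)^2$ at $v=1$ yields $d_1^2=1$; on the branch fixed by $\left|\arg d_1^2\right|\le\left|\arg v\right|$ this forces $d_1=1$. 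Plugging these values directly into the red $\Erfc$ term of Theorem \ref{simpledoublesmoothingthm0} reproduces the red term of the present theorem, and the bound in \eqref{ERFCdef} collapses (via $\alpha_0^2 N_0+\alpha_1^2 N_1=\alpha_0^2 N_2$) to the $\AbsErfc$ displayed here.

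For the blue term I would rewrite its coefficient as $-(\sigma_2^{N_2+1}/(\sigma_0^{N_0+1}\sigma_1^{N_1}))\,I(\sigma_1/\sigma_0)$, where $I(\beta)$ is the integral in \eqref{Ibeta}. In the extreme case $\beta=s^*=N_1/N_0$, so Theorem \ref{hypergeom} together with Lemma \ref{2F1Lemma} provides asymptotics centred at $\gamma(s^*)=0$ with limiting value $g(0,s^*)=(1-s^*)/(3(1+s^*)^{3/2})$. Using $\sigma_j=cN_j$ and Stirling's formula for $\Gamma(N_0+1)\Gamma(N_1+1)/\Gamma(N_2+2)$, the leading prefactor collapses to exactly $-\pi\im$. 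Multiplying by the expansion $\expe^{-\sigma_2 z}z^{-N_2}F^{(1)}\left(z;\mytop{N_2+1}{\sigma_2}\right)\sim\pi\im\erfc\left(\alpha_2(z)\sqrt{\tfrac12 N_2}\right)+\cdots$ from Theorem \ref{normalStokes}, and using $\alpha_2=\alpha_0$, delivers the claimed blue term $\pi^2\erfc\left(\alpha_0(z)\sqrt{\tfrac12 N_2}\right)$; the $g(0,s^*)$ piece together with the $\im g_0(z)\sqrt{2\pi/N_2}$ subleading contribution from \eqref{Stokes1} combine and are absorbed into $R^{(1)}(z)$.

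The remaining, and hardest, step is assembling $R^{(1)}(z)$ itself from the more intricate remainder in Theorem \ref{simpledoublesmoothingthm0}. I would evaluate $d_2(z)$ in the confluent limit $\zeta_1\to\zeta_0$: the exponent $r(z)=\tfrac12\alpha_1^2(z)-\tfrac12 d_1^2(\alpha_0(z)-\alpha_0(\zeta_1))^2$ vanishes identically, so $d_2(z)$ becomes a $0/0$ form in $z-\zeta_0$ resolved by L'H\^opital's rule using the identity $\alpha_0(z)\alpha_0'(z)=1/z-1/\zeta_0$ (from which one also obtains $\alpha_0'(\zeta_0)=-\im\sigma_0/N_0$ via the branch convention). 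The $\tfrac13\im$ constants visible in the statement arise from that same branch convention $(-1)^j a_j\sim\alpha_j-\tfrac13\im\alpha_j^2$ near the zero of $\alpha_j$, giving $g_0(\zeta_0)=\tfrac13\im$; consequently the bracketed factor $(\tfrac13\im-g_0(z))$ equals $g_0(\zeta_0)-g_0(z)$ and vanishes on the Stokes curve, and the divided difference $(g_0(z)-g_0(\zeta_2))/(z-\zeta_2)$ remains $\bigO(1)$ because $g_0$ is analytic at $z=\zeta_0$ throughout the admissible sector. After these substitutions the three main lines of $R^{(1)}$ in Theorem \ref{simpledoublesmoothingthm0} consolidate into the compact form displayed here, while the $\bigO(\AbsErfc N_1^{-1})$ residual is inherited directly from the corresponding residual of the general theorem.
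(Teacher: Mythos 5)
Your proposal follows exactly the route the paper itself prescribes (``apply Lemma~\ref{2F1Lemma} to simplify the results of Theorem~\ref{simpledoublesmoothingthm0}''), and the key simplifications you record — $\zeta_0=\zeta_1=\zeta_2$, $\alpha_0=\alpha_1=\alpha_2$, $\alpha_0(\zeta_1)=0$, $d_1=1$, $d_2(z)=g_0(z)$, $g_0(\zeta_0)=\tfrac13\im$, and the collapse of $\AbsErfc$ via $\alpha_0^2N_0+\alpha_1^2N_1=\alpha_0^2N_2$ — are all correct and are exactly what is needed; in fact you supply more working than the paper itself does for this corollary-level theorem. Two small points: $d_2(z)$ is not a $0/0$ form for general $z$ — writing $\im-z\alpha_0'(z)=\im(\alpha_0-a_0)/\alpha_0$ and $1-z/\zeta_0=z\alpha_0\alpha_0'$ shows that $d_2(z)=g_0(z)$ holds \emph{algebraically} for all admissible $z$ in the confluent case, with L'H\^opital needed only at the isolated point $z=\zeta_0$; and the consolidation of $R^{(1)}$ that you leave implicit must include not only the three lines of the general remainder but also the $\bigO\left(N_1^{-1/2}\right)$ correction $\tfrac{N_0-N_1}{3}\sqrt{2\pi/(N_0N_1N_2)}$ from Lemma~\ref{2F1Lemma} and the $\im g_0(z)\sqrt{2\pi/N_2}$ correction to $F^{(1)}$ from Theorem~\ref{normalStokes}, and these combine with the $\im g_0(\zeta_2)\sqrt{2\pi N_1/(N_0N_2)}$ piece in the second line of the general $R^{(1)}$ to produce the coefficient of $\erfc\left(\alpha_0\sqrt{\tfrac12 N_2}\right)$ — a cancellation worth checking explicitly rather than asserting.
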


At the centre where the double Stokes phenomenon occurs, specifically when $\expe^{-\pi \im}z = \frac{N_0}{\sigma_0} = \frac{N_1}{\sigma_1}$, we find $\alpha_0(z) = 0$. The following corollary results from the preceding theorem and the final identity in \eqref{special}.

\begin{corollary}\label{DoubleStokesMultiplier}
Let $\sigma_0$, $\sigma_1$ and their reciprocals be bounded. Additionally, we impose the constraints $\left| \arg N_j  \right| \le  \frac{\pi}{2} - \delta$ ($< \frac{\pi }{2}$) for $j=0,1$ and
$\left|\arg\left(\expe^{-\frac{\pi}{2}\im}\sigma_2 z\right)\right| \le\pi-\delta$ ($<\pi$). Then, when $\expe^{-\pi \im}z=\frac{N_0}{\sigma_0}=\frac{N_1}{\sigma_1}$, we have
\begin{align*}
&\frac{\expe^{-(\sigma_0+\sigma_1)z}}{z^{N_0+N_1}}
F^{(2)}\left(z;\mytop{N_0+1,}{\sigma_0,}\mytop{N_1+1}{\sigma_1}\right)=
2\pi\arctan\sqrt{\frac{N_0}{N_1}}-\frac{\pi\im}{3}\sqrt{\frac{2\pi}{N_1}}
\left(1-\frac1{\sqrt{1+\frac{N_0}{N_1}}}\right)
+\bigO{\left(N_1^{-1}\right)},
\end{align*}
as $\Re(N_1)\to+\infty$.
\end{corollary}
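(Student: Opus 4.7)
The plan is to specialise Theorem \ref{simpledoublesmoothingthm0extreme} to the symmetric point $z = \zeta_0 = \zeta_1$ and then assemble the explicit terms of $R^{(1)}(z)$ using a local Taylor expansion of $g_0$ around this point.

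First I would observe that the hypothesis $\expe^{-\pi\im}z = N_0/\sigma_0 = N_1/\sigma_1$ forces $\sigma_1/\sigma_0 = N_1/N_0$, so the extreme collinear case of Theorem \ref{simpledoublesmoothingthm0extreme} applies; it also yields $\sigma_2 = \sigma_0 N_2/N_0$ and hence $\zeta_2 = \zeta_0$, so that $a_j(z) = \alpha_j(z) = 0$ for $j = 0, 1, 2$ at this $z$. The two leading terms of Theorem \ref{simpledoublesmoothingthm0extreme} then become $\pi^2 \erfc(0) - \pi^2 \Erfc(0;0;\sqrt{N_0/N_1})$, and using $\erfc(0) = 1$ together with the final identity of \eqref{special}, namely $\Erfc(0;0;\lambda) = 1 - \tfrac{2}{\pi}\arctan\lambda$ (obtained by differentiating \eqref{newserfc0} in $\lambda$), this collapses to $2\pi\arctan\sqrt{N_0/N_1}$.

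Next I would evaluate the explicit expression for $R^{(1)}(z)$ given in Theorem \ref{simpledoublesmoothingthm0extreme} at $z = \zeta_0$. Setting $w = \sigma_0(z-\zeta_0)/N_0$, one has $\im a_0 = w$ and $\tfrac{1}{2}\alpha_0^2 = w + \ln(1 - w)$; inverting in the branch $\alpha_0 \sim a_0$ gives $\alpha_0 = -\im w\bigl(1 + w/3 + 7w^2/36 + \cdots\bigr)$, from which a short computation yields $g_0(z) = 1/a_0 - 1/\alpha_0 = \im/3 + \im w/12 + O(w^2)$. Hence $g_0(\zeta_0) = \im/3$, which annihilates the first term of $R^{(1)}$, reduces the second to $-\tfrac{\pi\im}{3}\sqrt{2\pi/N_1}$, and simplifies the sub-term $\pi g_0(z)\sqrt{2\pi/N_2}$ to $\tfrac{\pi\im}{3}\sqrt{2\pi/N_2}$. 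The identity $\sqrt{2\pi/N_2} = \sqrt{2\pi/N_1}/\sqrt{1 + N_0/N_1}$ then assembles these into the advertised $-\tfrac{\pi\im}{3}\sqrt{2\pi/N_1}\bigl(1 - 1/\sqrt{1 + N_0/N_1}\bigr)$.

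The one delicate point is the remaining sub-term of $R^{(1)}$, the difference quotient $\tfrac{2\pi\im}{\sigma_2}\sqrt{N_1/N_0}\cdot \frac{g_0(z) - \im/3}{z - \zeta_2}$, where both numerator and denominator vanish at $z = \zeta_0 = \zeta_2$. Using the expansion above, the limit equals $\tfrac{2\pi\im}{\sigma_2}\sqrt{N_1/N_0}\cdot \tfrac{\im\sigma_0}{12 N_0}$, and after substituting $\sigma_0/\sigma_2 = N_0/N_2$ this reduces to $-\tfrac{\pi}{6 N_2}\sqrt{N_1/N_0}$, which under the bounded-ratios hypotheses is $O(1/N_1)$. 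Combined with the observation that $\AbsErfc = O(1)$ at this point, so the Theorem's error $O(\AbsErfc/N_1)$ is itself $O(1/N_1)$, the corollary follows. The main obstacle is tracking enough terms in the $w$-expansion of $\alpha_0$ to confirm that this difference quotient contributes only at order $O(1/N_1)$; everything else is direct substitution.
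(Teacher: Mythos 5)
Your proposal is correct and follows exactly the route indicated by the paper: specialize Theorem \ref{simpledoublesmoothingthm0extreme} at the centre $z=\zeta_0=\zeta_1=\zeta_2$ where all $\alpha_j$ vanish, and invoke the identity $\Erfc(0;0;\lambda)=1-\tfrac{2}{\pi}\arctan\lambda$ from \eqref{special}. The paper's proof of the corollary is a one-line remark; your Taylor expansion of $\alpha_0$ and $g_0$ about $w=0$ (giving $g_0(\zeta_0)=\im/3$, the resolution of the $0/0$ difference quotient, and the recombination via $\sqrt{2\pi/N_2}=\sqrt{2\pi/N_1}/\sqrt{1+N_0/N_1}$) is the correct and necessary filling-in of the details that the paper leaves implicit.
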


For the proofs of the above results, we will rely on the following two Lemmas.

\begin{lemma}\label{simpledoublesmoothingthm}
Let $\sigma_0$, $\sigma_1$, $\frac{\sigma_0 z}{N_0}$, $\frac{\sigma_1 z}{N_1}$ and their reciprocals be bounded. Additionally, we impose the constraints $\left| \arg N_j  \right| \le  \frac{\pi}{2} - \delta$ ($< \frac{\pi }{2}$) for $j=0,1$, $\left| \arg \left( \frac{\sigma _0 N_1}{\sigma _1 N_0} \right) \right| \le \pi  - 2\varepsilon  - 2\delta$ and
$\left|\arg\left(\expe^{-\frac{\pi}{2}\im}\sigma_jz\right)\right| \le\pi-\delta$ ($<\pi$) for $j=0,1,2$. Then
\begin{align*}
&\frac{\expe^{-(\sigma_0+\sigma_1)z}}{z^{N_0+N_1}}
F^{(2)}\left(z;\mytop{N_0+1,}{\sigma_0,}\mytop{N_1+1}{\sigma_1}\right)=\\
& 
{\color{blue}-\frac{\sigma_2^{N_2+1}}{\sigma_0^{N_0+1}
\sigma_1^{N_1}}\frac{\Gamma(N_0+1)\Gamma(N_1+1)}{\Gamma(N_2+2)}
\genhyperF{2}{1}{1}{N_0+1}{N_2+2}{1+\frac{\sigma_1}{\sigma_0}}
\frac{\expe^{-(\sigma_0+\sigma_1)z}}{z^{N_0+N_1}}
F^{(1)}\left(z;\mytop{N_0+N_1+1}{\sigma_0+\sigma_1}\right)}\\
& {\color{red}-\frac{\pi^{\frac32} \Gamma(N_1)\expe^{N_1}}{\sqrt{2}N_1^{N_1-\frac12}}
\Erfc\left(d_1\alpha_0(z)\sqrt{\tfrac12N_1};d_1\alpha_0(\zeta_1)\sqrt{\tfrac12N_1};
d_1^{-1}\sqrt{\frac{N_0}{N_1}}\right)}+R^{(1)}(z),
\end{align*}
where $R^{(1)}(z)=\bigO{\left(\AbsErfc N_1^{-1/2}\right)}$ as $\Re(N_1)\to+\infty$. The quantity $\AbsErfc$ is defined in \eqref{ERFCdef}. An additional term in the approximation is
\begin{align*}
R^{(1)}(z)=d_2&(z) \alpha_0'(\zeta_2) \frac{\sigma_2^{N_2-1}\sqrt{2\pi} N_0^{N_0+\frac12}\expe^{-N_0}\Gamma(N_1)
  }{\sigma_0^{N_0}\sigma_1^{N_1}\Gamma(N_2)}\frac{\expe^{-(\sigma_0+\sigma_1)z}}{
z^{N_0+N_1}}F^{(1)}\left(z;\mytop{N_0+N_1+1}{\sigma_0+\sigma_1}\right)\\
& -d_2(z)\frac{\pi \Gamma(N_1)}{\left(\expe^{-\pi\im}\sigma_1\right)^{N_1}}
\frac{\expe^{-\sigma_1 z}}{z^{N_1}}\erfc\left(\alpha_0(z)\sqrt{\tfrac{1}{2}N_0}\right)\\
& +\im g_0(\zeta_2)\frac{\sigma_2^{N_2}}{\sigma_0^{N_0}
\sigma_1^{N_1}}\frac{\Gamma(N_0)\Gamma(N_1+1)}{\Gamma(N_2+1)}
\frac{\expe^{-(\sigma_0+\sigma_1)z}}{z^{N_0+N_1}}
F^{(1)}\left(z;\mytop{N_0+N_1+1}{\sigma_0+\sigma_1}\right)\\
& -\frac{\im\sqrt{2\pi}\Gamma(N_1+1)N_0^{N_0-\frac12}\expe^{-N_0}}{\sigma_2
\left(\expe^{-\pi\im}\sigma_0\right)^{N_0}\left(\expe^{-\pi\im}\sigma_1\right)^{N_1}}\frac{g_0(z)-g_0(\zeta_2)}
{z-\zeta_2}\frac{\expe^{-(\sigma_0+\sigma_1)z}}{z^{N_0+N_1}}
+\bigO{\left(\AbsErfc N_1^{-1}\right)}.
\end{align*}
\end{lemma}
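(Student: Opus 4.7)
The plan is to begin from the hybrid integral representation \eqref{sigma1int2}. Its first term is precisely a multiple of $F^{(1)}\left(z;\mytop{N_0+N_1+1}{\sigma_0+\sigma_1}\right)$ times $I(\sigma_1/\sigma_0)$, and by the definition of $I(\beta)$ in \eqref{Ibeta} this already gives, with no asymptotic work at all, exactly the blue hypergeometric term on the right-hand side of the claimed identity. All subsequent analysis is concentrated on the second piece of \eqref{sigma1int2}, which I will call $J(z)$ and which contains the integral $\int_z^\infty \expe^{-(\sigma_0+\sigma_1)t}t^{-N_0-N_1-1}F^{(1)}\left(t;\mytop{N_0+1}{\sigma_0}\right)\id t$.

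Step two is to substitute the exact identity \eqref{Stokes1Full} for the inner hyperterminant into the integrand of $J(z)$. This splits $J(z)$ into three subintegrals: a leading one proportional to $\erfc(\alpha_0(t)\sqrt{N_0/2})$, a subleading one proportional to $g_0(t)\expe^{-\alpha_0^2(t)N_0/2}$, and a remainder of relative size $N_0^{-1}$ governed by the $r_0(t)$ of \eqref{Stokes1Full}. In each subintegral, I rewrite the Laplace factor $\expe^{-\sigma_1 t}t^{-N_1}$ using the exact identity $\expe^{-\sigma_1 t}t^{-N_1}=\expe^{N_1}(\expe^{-\pi\im}\sigma_1/N_1)^{N_1}\expe^{-N_1\alpha_1^2(t)/2}$, so that the integrand becomes Gaussian in $\alpha_1(t)$ about the saddle at $t=\zeta_1=\expe^{\pi\im}N_1/\sigma_1$.

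The crucial manoeuvre is the change of variable $\tau = d_1\alpha_0(t)\sqrt{N_1/2}$, which has three virtues at once: the argument of $\erfc$ becomes exactly $\lambda\tau$ with $\lambda=d_1^{-1}\sqrt{N_0/N_1}$; the endpoint $t=z$ maps to $\tau=x:=d_1\alpha_0(z)\sqrt{N_1/2}$; and, because $d_1$ has been designed so that $d_1\alpha_0'(\zeta_1)=\alpha_1'(\zeta_1)=\im/\zeta_1$, the Gaussian weight transforms into $\expe^{-(\tau-y)^2}\expe^{-N_1 r(t)}$ with $y=d_1\alpha_0(\zeta_1)\sqrt{N_1/2}$ and $r(t)=\tfrac12\alpha_1^2(t)-\tfrac12 d_1^2(\alpha_0(t)-\alpha_0(\zeta_1))^2$ vanishing at $\zeta_1$ together with its first two derivatives. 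Replacing $\expe^{-N_1 r(t)}$ by $1$ and $t^{-1}$ by $\zeta_1^{-1}$ at leading order collapses the main subintegral onto a multiple of $\Erfc(x;y;\lambda)$, whose prefactor reduces after Stirling is applied to $\Gamma(N_1+1)$ to the red constant $-\pi^{3/2}\Gamma(N_1)\expe^{N_1}/(\sqrt{2}\,N_1^{N_1-1/2})$ stated in the lemma. The first Taylor corrections of $t^{-1}$ and $\expe^{-N_1 r(t)}-1$ near $\zeta_1$, and the analogous expansion of $g_0(t)$ around the combined saddle $\zeta_2$ in the subleading $g_0$-piece (whose effective Gaussian centre is shifted by the extra $\expe^{-\alpha_0^2(t)N_0/2}$), account precisely for the $d_2(z)$, $g_0(\zeta_2)$ and divided-difference $(g_0(z)-g_0(\zeta_2))/(z-\zeta_2)$ contributions listed in the explicit form of $R^{(1)}$.

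The main technical obstacle will be the uniform control of the nonlinearity $\expe^{-N_1 r(t)}-1$: one must show that, multiplied by $\erfc(\lambda\tau)$ and integrated against $\expe^{-(\tau-y)^2}$, it produces a quantity bounded by $N_1^{-1/2}\AbsErfc$ with $\AbsErfc$ as in \eqref{ERFCdef}, uniformly over the declared parameter domain. The analyticity of the $\alpha_j$ on the sector $\left|\arg(\expe^{-\pi\im}\sigma_jz/N_j)\right|<2\pi$ from \S\ref{Notation} supplies the quantitative Taylor bounds on $r(t)$, while the hypotheses $\left|\arg N_j\right|\le\varepsilon$ and $\left|\arg(\sigma_0N_1/(\sigma_1N_0))\right|\le\pi-2\varepsilon-2\delta$, combined with the inequality $\left|\arg(d_1^2)\right|\le\left|\arg v\right|$ recorded at the end of \S\ref{Notation}, ensure that $\lambda$ lies in a half-plane that avoids the imaginary branch cuts of $\Erfc$ discussed in Appendix \ref{Newerfc}. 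These ingredients, together with standard Laplace-type estimates that separate the $\Erfc$-scale from the pure Gaussian scale appearing in the three terms of $\AbsErfc$, deliver the stated $\bigO(\AbsErfc N_1^{-1/2})$ bound on $R^{(1)}(z)$.
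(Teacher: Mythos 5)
Your proposal tracks the paper's own proof at the level of architecture: you start from the representation \eqref{sigma1int2}/\eqref{newintegral} so the blue hypergeometric $F^{(1)}$ term drops out exactly; you insert the exact expansion \eqref{Stokes1Full} of the inner $F^{(1)}$ to split the remaining integral into the $\erfc$-piece, the $g_0$-piece and a genuinely small third piece (these are precisely the paper's $w_1$, $w_2$, $r_1$); you rewrite the Laplace kernel via $\expe^{-\frac12\alpha_1^2(t)N_1}$ and change variables to $\alpha_0(t)$ so that the endpoint, the Gaussian centre and the $\erfc$ argument align with the three arguments of $\Erfc(x;y;\lambda)$; and you correctly identify why the hypotheses on $\arg N_j$ and $\arg(\sigma_0 N_1/(\sigma_1 N_0))$ keep $\lambda=d_1^{-1}\sqrt{N_0/N_1}$ off the branch cuts of $\Erfc$. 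That much is the paper's route.

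There are, however, two real deficiencies. First, a minor slip: the red prefactor $-\pi^{3/2}\Gamma(N_1)\expe^{N_1}/(\sqrt 2\,N_1^{N_1-1/2})$ in the lemma is \emph{exact} --- it is $-\pi^{3/2}\Gamma(N_1+1)\expe^{N_1}/(\sqrt 2\,N_1^{N_1+1/2})$ rewritten using $\Gamma(N_1+1)=N_1\Gamma(N_1)$; Stirling is applied only later, in Theorem~\ref{simpledoublesmoothingthm0}, to replace this by $\pi^2$. Second, and more substantially, your account of where $d_2(z)$ comes from is wrong, and this is not a cosmetic matter. You describe $d_2(z)$ and the companion correction terms as ``first Taylor corrections of $t^{-1}$ and $\expe^{-N_1 r(t)}-1$ near $\zeta_1$.'' But a Taylor expansion about the saddle $\zeta_1$ produces coefficients that depend only on $\zeta_1$, whereas $d_2(z)$ is manifestly $z$-dependent. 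The paper instead writes the exact Bleistein decomposition
\begin{equation*}
\frac{\im}{t}=d_1\alpha_0'(t)\expe^{N_1 r(t)}+d_2(z)\,\alpha_1(t)\alpha_1'(t)+(t-z)\,\alpha_1(t)\alpha_1'(t)\,h_1(t),
\end{equation*}
in which $d_1$ is fixed by matching at the saddle $\zeta_1$ (so that $\expe^{N_1 r(t)}$ cancels the deviation of the exact exponent from a pure Gaussian, giving $\Erfc$ exactly with no ``replace by $1$'' step), and $d_2(z)$ is fixed by matching at the integration \emph{endpoint} $t=z$ so that the residual vanishes there. It is precisely this endpoint-vanishing that, after one integration by parts on the $d_2(z)\alpha_1\alpha_1'$ piece, produces the boundary $\erfc$ term and guarantees the extra factor $N_1^{-1/2}$ in the uniform remainder bound relative to $\AbsErfc$. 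A naive saddle Taylor expansion would not deliver this cancellation at $t=z$, and the advertised uniform estimate $R^{(1)}(z)=\bigO(\AbsErfc\,N_1^{-1/2})$ would not follow. The same two-point (saddle $\zeta_2$ plus endpoint $z$) interpolation is applied again to the secondary integral arising from $w_2(z)$; this is where the divided difference $(g_0(z)-g_0(\zeta_2))/(z-\zeta_2)$ comes from, again not a Taylor coefficient at $\zeta_2$. You should replace the ``replace $\expe^{-N_1 r(t)}$ by $1$ and $t^{-1}$ by $\zeta_1^{-1}$, then Taylor-correct'' step with the Bleistein ansatz above, followed by the integration by parts, to make the argument actually close.
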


\begin{lemma}\label{2F1Lemma}
Let $\sigma_0$, $\sigma_1$ and their reciprocals be bounded. Assume that $\frac{\sigma_1}{\sigma_0} = \frac{N_1}{N_0} \expe^{\im \nu}$ with $\nu = \bigO\left(N_1^{-1}\right)$ as $\Re(N_1) \to +\infty$. Additionally, assume that $\left|\arg\big(\frac{\sigma_1}{\sigma_0}\big)\right| < \pi$.
 Then we have
\begin{multline*}
\frac{\left(\sigma_0+\sigma_1\right)^{N_0+N_1+1}}{\sigma_0^{N_0+1}
\sigma_1^{N_1}}\frac{\Gamma(N_0+1)\Gamma(N_1+1)}{\Gamma(N_0+N_1+2)}
\genhyperF{2}{1}{1}{N_0+1}{N_0+N_1+2}{1+\frac{\sigma_1}{\sigma_0}}\\
  =\pi\im-\im\sqrt{\frac{2\pi N_0N_1}{N_0+N_1}}\nu+\frac{N_0-N_1}3
\sqrt{\frac{2\pi}{N_0N_1(N_0+N_1)}}+\bigO{\left(N_1^{-1}\right)},
\end{multline*}
as $\Re(N_1) \to +\infty$.
\end{lemma}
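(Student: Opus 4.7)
The plan is to reduce Lemma \ref{2F1Lemma} directly to the asymptotic expansion of Theorem \ref{hypergeom}. Setting $\beta=\sigma_1/\sigma_0$, the algebraic prefactor rearranges as $(\sigma_0+\sigma_1)^{N_0+N_1+1}/(\sigma_0^{N_0+1}\sigma_1^{N_1}) = (1+\beta)^{N_0+N_1+1}/\beta^{N_1}$, so the left-hand side of the lemma becomes precisely $(1+\beta)^{N_0+N_1+1}\beta^{-N_1}\,I(\beta)$, with $I(\beta)$ as in \eqref{Ibeta}. Substituting the asymptotic \eqref{Ibetaasym} splits the quantity into an ``erfc part'' $\pi\im\,\erfc(\gamma(\beta)\sqrt{N_1/2})$ and a ``Laplace part''
\[
\frac{(1+\beta)^{N_0+N_1+1}}{\beta^{N_1}}\,g(0,\beta)\,\frac{N_0^{N_0}N_1^{N_1}}{(N_0+N_1)^{N_0+N_1}}\sqrt{\frac{2\pi}{N_1}},
\]
modulo a remainder of order $N_1^{-3/2}$ inherited from the next term in the Laplace expansion of the canonical integral \eqref{canonicalIb} in the proof of Theorem \ref{hypergeom}.

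For the erfc part, the hypothesis $\beta=s^*\expe^{\im\nu}$ with $s^*=N_1/N_0$ and $\nu=\bigO(N_1^{-1})$ makes the argument of $\gamma$ small; using the branch specification $\gamma(s^*\expe^{\im\nu})\sim\nu/\sqrt{1+s^*}$ together with $\erfc(x)=1-\frac{2}{\sqrt\pi}x+\bigO(x^3)$ and the identity $N_1/(1+s^*)=N_0N_1/(N_0+N_1)$ gives
\[
\pi\im\,\erfc\left(\gamma(\beta)\sqrt{\tfrac{1}{2}N_1}\right)=\pi\im-\im\nu\sqrt{\frac{2\pi N_0N_1}{N_0+N_1}}+\bigO(N_1^{-3/2}),
\]
which accounts for the first two terms in the claimed expansion.

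For the Laplace part, the key observation is that $g(0,\beta)$ has only a removable singularity at $\beta=s^*$, where the two terms of \eqref{g0beta} cancel to leading order, with limit value $g(0,s^*)=(1-s^*)/(3(1+s^*)^{3/2})$ as recorded at the end of the proof of Theorem \ref{hypergeom}. A direct computation, using $1+s^*=(N_0+N_1)/N_0$, yields
\[
\left.\frac{(1+\beta)^{N_0+N_1+1}}{\beta^{N_1}}\right|_{\beta=s^*}\cdot\frac{N_0^{N_0}N_1^{N_1}}{(N_0+N_1)^{N_0+N_1}}=\frac{N_0+N_1}{N_0},
\]
so substituting everywhere at $\beta=s^*$ produces the third claimed term $\frac{N_0-N_1}{3}\sqrt{\frac{2\pi}{N_0N_1(N_0+N_1)}}$.

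The only genuine obstacle is verifying that replacing $\beta$ by $s^*$ in the Laplace part stays within the claimed remainder $\bigO(N_1^{-1})$. Since $g(0,\beta)$ is analytic at $s^*$, one has $g(0,\beta)-g(0,s^*)=\bigO(\nu)=\bigO(N_1^{-1})$. For the large-exponent prefactor, taking logarithms of
\[
\frac{(1+\beta)^{N_0+N_1+1}\beta^{-N_1}}{(1+s^*)^{N_0+N_1+1}(s^*)^{-N_1}}
\]
and expanding in powers of $\nu$: the linear piece reduces (after cancellation against $-N_1\ln(\beta/s^*)=-N_1\im\nu$) to $\im\nu\cdot N_1/(N_0+N_1)=\bigO(N_1^{-1})$, and the quadratic piece is $\bigO(N_1\nu^2)=\bigO(N_1^{-1})$, so this factor equals $1+\bigO(N_1^{-1})$. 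Multiplied against the $\bigO(N_1^{-1/2})$ Laplace term these corrections are $\bigO(N_1^{-3/2})$; combined with the $\bigO(N_1^{-3/2})$ remainder from the erfc Taylor expansion and from the saddle-point remainder in Theorem \ref{hypergeom}, the total error lies comfortably within the stated $\bigO(N_1^{-1})$.
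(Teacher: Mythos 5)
Your proof is correct and follows essentially the same route as the paper's: reduce the left-hand side to $(1+\beta)^{N_0+N_1+1}\beta^{-N_1}I(\beta)$, invoke Theorem \ref{hypergeom}'s expansion \eqref{Ibetaasym}, and expand $\gamma(\beta)$ and $g(0,\beta)$ near $\beta=s^*$ together with two terms of the Maclaurin series for $\erfc$. Your version spells out the error bookkeeping (in particular the justification for evaluating the Laplace part at $\beta=s^*$) more explicitly than the paper's terse proof, which simply records the local expansions of $\gamma$ and $g(0,\beta)$ and instructs the reader to combine them.
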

\begin{proof}[Proof of Lemma \ref{2F1Lemma}]
With the notation from \eqref{Ibeta}, we have
\begin{equation*}
\frac{\left(\sigma_0+\sigma_1\right)^{N_0+N_1+1}}{\sigma_0^{N_0+1}
\sigma_1^{N_1}}\frac{\Gamma(N_0+1)\Gamma(N_1+1)}{\Gamma(N_0+N_1+2)}
\genhyperF{2}{1}{1}{N_0+1}{N_0+N_1+2}{1+\frac{\sigma_1}{\sigma_0}}
=\frac{\left(1+\frac{\sigma_1}{\sigma_0}\right)^{N_0+N_1+1}}{\left(\frac{\sigma_1}{\sigma_0}\right)^{N_1}}I\left(\frac{\sigma_1}{\sigma_0}\right).
\end{equation*}
Using \eqref{gamma} and \eqref{g0beta}, we find
\begin{equation*}
\gamma\sim\sqrt{\frac{N_0}{N_0+N_1}}\left(\nu+\frac{N_0-N_1}{N_0+N_1}\frac{\im\nu^2}6\right),
\qquad g\left(0,\frac{\sigma_1}{\sigma_0}\right)\sim
\frac13\sqrt{\frac{N_0}{N_0+N_1}}\frac{N_0-N_1}{N_0+N_1}.
\end{equation*}
We combine this with \eqref{Ibetaasym} and use two terms from the Maclaurin series for $\erfc(x)$. 
\end{proof}

\begin{proof}[Proof of Theorem \ref{simpledoublesmoothingthm0}.]
This theorem is a consequence of Lemma \ref{simpledoublesmoothingthm}. Note that all the terms on the right-hand side of Lemma \ref{simpledoublesmoothingthm} are of the order $\bigO{\left(\AbsErfc \right)}$.
Therefore, any manipulation of these terms by a factor $1+\bigO{\left(N_1^{-1}\right)}$ is permissible. We also use the approximation
$\Gamma (N_j )\expe^{N_j } N_j^{ - N_j  + 1}=\sqrt{2\pi N_j}\left(1+\bigO{\left(N_1^{-1}\right)}\right)$.
\end{proof}

\begin{proof}[Proof of Lemma \ref{simpledoublesmoothingthm}.]
Recall the integral representation \eqref{newintegral}:
\begin{align*}
F^{(2)}&\left(z;\mytop{N_0+1,}{\sigma_0,}\mytop{N_1+1}{\sigma_1}\right)=\\
-& \frac{\left(\sigma_0+\sigma_1\right)^{N_0+N_1+1}}{\sigma_0^{N_0+1}
\sigma_1^{N_1}}\frac{\Gamma(N_0+1)\Gamma(N_1+1)}{\Gamma(N_0+N_1+2)}
\genhyperF{2}{1}{1}{N_0+1}{N_0+N_1+2}{1+\frac{\sigma_1}{\sigma_0}}
F^{(1)}\left(z;\mytop{N_0+N_1+1}{\sigma_0+\sigma_1}\right)\\
 -& \frac{\Gamma(N_1+1)}{\left(\expe^{-\pi\im}\sigma_1\right)^{N_1}}\expe^{(\sigma_0+\sigma_1) z}z^{N_0+N_1}
\int_z^\infty \expe^{-(\sigma_0+\sigma_1)t}t^{-N_0-N_1-1}
F^{(1)}\left(t;\mytop{N_0+1}{\sigma_0}\right)\id t.
\end{align*}
We use the representation \eqref{Stokes1Full} in the last integral to rewrite it as follows:
\begin{equation*}
    -\frac{\Gamma(N_1+1)}{\left(\expe^{-\pi\im}\sigma_1\right)^{N_1}}
\int_z^\infty \expe^{-(\sigma_0+\sigma_1)t}t^{-N_0-N_1-1}
F^{(1)}\left(t;\mytop{N_0+1}{\sigma_0}\right)\id t= w_1(z)+w_2(z)+r_1(z),
\end{equation*}
with
\begin{gather}\begin{split}\label{wwr}
    w_1(z)&=-\frac{\pi\im \Gamma(N_1+1)}{\left(\expe^{-\pi\im}\sigma_1\right)^{N_1}}
\int_z^\infty \expe^{-\sigma_1t}t^{-N_1-1}
\erfc\left(\alpha_0(t)\sqrt{\tfrac{1}{2}N_0}\right)\id t,\\
 w_2(z)&=-\im\sqrt{\frac{2\pi}{N_0}}\frac{\Gamma(N_1+1)N_0^{N_0}\expe^{-N_0}}{\left(\expe^{-\pi\im}\sigma_0\right)^{N_0}
    \left(\expe^{-\pi\im}\sigma_1\right)^{N_1}}
\int_z^\infty \expe^{-\sigma_2 t}t^{-N_2-1}g_{0}(t)\id t,\\
 r_1(z)&=\frac{\sqrt{2\pi}}{N_0^{3/2}}\frac{\Gamma(N_1+1)N_0^{N_0}\expe^{-N_0}}{\left(\expe^{-\pi\im}\sigma_0\right)^{N_0}
    \left(\expe^{-\pi\im}\sigma_1\right)^{N_1}}
\int_z^\infty \expe^{-\sigma_2 t}t^{-N_2-1}r_{0}(t)\id t.
\end{split}\end{gather}
In deriving \eqref{Stokes1Full}, we applied the conditions $\Re(N_0)>0$ and $\left|\arg\left(\expe^{-\pi\im}\sigma_0zN_0^{-1}
\right)\right|<2\pi$. These conditions are satisfied when $\Re(N_0)>0$ and $\left|\arg\left(\expe^{-\pi\im}\sigma_0z\right)\right|<\frac{3\pi}{2}$. In the subsequent analysis detailed below, we require the integration paths in \eqref{wwr} to be progressive ({\it cf.} \cite[\S6.11.3]{Olv97}). Our constraints include $\left| \arg N_j  \right| \le  \frac{\pi}{2} - \delta$ ($< \frac{\pi }{2}$) for $j=0,1$, $\left| \arg \left( \frac{\sigma _0 N_1}{\sigma _1 N_0} \right) \right| \le \pi  - 2\varepsilon  - 2\delta$ and $\left|\arg\left(\expe^{-\frac{\pi}{2}\im}\sigma_jz\right)\right|\le \pi-\delta$ ($<\pi$), $j=0,1,2$.

We can express the integral representation for $w_1(z)$ as follows:
\begin{equation}\label{ww1}
    w_1(z)=-\frac{\pi \Gamma(N_1+1)\expe^{N_1}}{N_1^{N_1}}
\int_z^\infty \expe^{-\frac12\alpha_1^2(t)N_1}
\erfc\left(\alpha_0(t)\sqrt{\tfrac{1}{2}N_0}\right)\frac{\im}{t}\id t.
\end{equation}
Hereafter, we employ the substitution $\tau=\alpha_0(t)$. Observe that 
\begin{equation}\label{alphaprime}
   \alpha_j(t)\alpha_j'(t)=\frac1t+\frac{\sigma_j}{N_j},  
\end{equation} 
and take $t^*=\zeta_1=\expe^{\pi \im}N_1/\sigma_1$, the saddle point of the exponential in \eqref{ww1}, and
$\tau^*=\alpha_0(t^*)\sim\nu$, as $\nu\to0$. Therefore, $f(\tau)=\frac12\alpha_1^2(t)$
has a saddle point at $\tau=\tau^*$. From \eqref{alphaprime}, it follows that $\tau=\left(\frac1t+\frac{\sigma_0}{N_0}\right)t'(\tau)$ and
$f'(\tau)=\left(\frac1t+\frac{\sigma_1}{N_1}\right)t'(\tau)$, hence
$f'(\tau)-\tau=\left(\frac{\sigma_1}{N_1}-\frac{\sigma_0}{N_0}\right)t'(\tau)$.
Differentiating this result gives us
$f''(\tau)-1=\left(\frac{\sigma_1}{N_1}-\frac{\sigma_0}{N_0}\right)t''(\tau)$ and 
$f^{(n)}(\tau)=\left(\frac{\sigma_1}{N_1}-\frac{\sigma_0}{N_0}\right)t^{(n)}(\tau)$ for $n\geq3$. We have
\begin{equation*}
    f(\tau^*)=f'(\tau^*)=0,\qquad f''(\tau^*)=\left(\frac{\im\tau^*}{1-\expe^{-\im\nu}}
    \right)^2=d_1^2,
\end{equation*}
where $d_1$ is defined below. Hence, for the function
\begin{equation*}
    r(t)=\tfrac12\alpha_1^2(t)-\tfrac12 d_1^2\left(\alpha_0(t)-\tau^*\right)^2
    =f(\tau)-\tfrac12 f''(\tau^*)\left(\tau-\tau^*\right)^2,
\end{equation*}
we find that
\begin{equation*}
    r(t^*)=\frac{\d r(t^*)}{\d\tau}=\frac{\d^2 r(t^*)}{\d\tau^2}=0,\qquad
    \frac{\d^n r(t)}{\d\tau^n}
    =\left(\frac{\sigma_1}{N_1}-\frac{\sigma_0}{N_0}\right)t^{(n)}(\tau),
    \qquad n\geq3.
\end{equation*}
Consequently, $\expe^{N_1 r(t)}$ remains bounded in a large neighborhood of $t=t^*$.

For the integral in \eqref{ww1}, we apply the Bleistein ansatz (see, e.g., \cite[\S2]{KOD2017})
\begin{equation*}
h_0(\tau)=\frac{\im}t=d_1\alpha_0'(t)\expe^{N_1 r(t)}+d_2(z)\alpha_1(t)\alpha_1'(t) +(t-z)\alpha_1(t)\alpha_1'(t)h_1(t),
\end{equation*}
where
\begin{equation*}
    d_1=\frac{\im}{t^*\alpha_0'(t^*)}=\frac{\im\alpha_0(t^*)}{1-\expe^{-\im\nu}}\sim1,
    \qquad
    d_2(z)=\frac{\im\left(1-\frac{\alpha_0(t^*)}{1-\expe^{-\im\nu}}
    \frac{1+\frac{\sigma_0z}{N_0}}{\alpha_0(z)}\expe^{N_1 r(z)}\right)}{1
    +\frac{\sigma_1 z}{N_1}}.
\end{equation*}
The $\sim1$ is specific to the critical case $\nu\to0$.
We derive the expression
\begin{gather}\begin{split}\label{fulluniform3}
    w_1(z)=&-\frac{\pi \Gamma(N_1+1)\expe^{N_1}}{N_1^{N_1}}d_1
\int_{\alpha_0(z)}^\infty \expe^{-\frac12 d_1^2\left(\tau-\tau^*\right)^2N_1}
\erfc\left(\tau\sqrt{\tfrac{1}{2}N_0}\right)\id \tau\\
&-\frac{\pi \Gamma(N_1+1)\expe^{N_1}}{N_1^{N_1}}d_2(z)
\int_z^\infty \alpha_1(t)\alpha_1'(t)\expe^{-\frac12\alpha_1^2(t)N_1}
\erfc\left(\alpha_0(t)\sqrt{\tfrac{1}{2}N_0}\right)\id t\\
&-R_1(z)
\end{split}\end{gather}
where
\begin{equation*}
    R_1(z)=\frac{\pi \Gamma(N_1+1)\expe^{N_1}}{N_1^{N_1}}d_2(z)
\int_z^\infty \alpha_1(t)\alpha_1'(t)\expe^{-\frac12\alpha_1^2(t)N_1}
\erfc\left(\alpha_0(t)\sqrt{\tfrac{1}{2}N_0}\right)(t-z)h_1(t)\id t.
\end{equation*}
The first integral in \eqref{fulluniform3} can be expressed using our new special function
\begin{gather}\begin{split}\label{fulluniform2}
    -\frac{\pi \Gamma(N_1+1)\expe^{N_1}}{N_1^{N_1}}d_1
\int_{\alpha_0(z)}^\infty &\expe^{-\frac12d_1^2\left(\tau-\tau^*\right)^2N_1}
\erfc\left(\tau\sqrt{\tfrac{1}{2}N_0}\right)\id \tau\\
=-\frac{\pi^{\frac32} \Gamma(N_1+1)\expe^{N_1}}{\sqrt{2}N_1^{N_1+\frac12}}
& \Erfc\left(d_1\alpha_0(z)\sqrt{\tfrac12N_1};d_1\tau^*\sqrt{\tfrac12N_1};
d_1^{-1}\sqrt{\frac{N_0}{N_1}}\right).
\end{split}\end{gather}

For the second integral in \eqref{fulluniform3}, we apply integration by parts and derive
\begin{align*}
-\frac{\pi \Gamma(N_1+1)\expe^{N_1}}{N_1^{N_1}}& d_2(z)
\int_z^\infty \alpha_1(t)\alpha_1'(t)\expe^{-\frac12\alpha_1^2(t)N_1}
\erfc\left(\alpha_0(t)\sqrt{\tfrac{1}{2}N_0}\right)\id t\\
 =\; &-d_2(z)\frac{\pi \Gamma(N_1)}{\left(\expe^{-\pi\im}\sigma_1\right)^{N_1}}
\frac{\expe^{-\sigma_1 z}}{z^{N_1}}\erfc\left(\alpha_0(z)\sqrt{\tfrac{1}{2}N_0}\right)\\
& +\frac{\sqrt{2\pi N_0} \Gamma(N_1+1)\expe^{N_1}}{N_1^{N_1+1}}d_2(z)
\int_z^\infty \expe^{-\frac12\alpha_0^2(t)N_0-\frac12\alpha_1^2(t)N_1}
\alpha_0'(t)\id t.
\end{align*}
The final integral has a saddle point at $t=\zeta_2$
and an endpoint at $t=z$. Again, we apply the Bleistein method using
\begin{equation*}
    \alpha_0'(t)=e_1+e_2\left(\alpha_0(t)\alpha_0'(t)+\frac{N_1}{N_0}
    \alpha_1(t)\alpha_1'(t)\right)
    +(t-z)\left(\alpha_0(t)\alpha_0'(t)+\frac{N_1}{N_0}
    \alpha_1(t)\alpha_1'(t)\right)\widetilde{h}_1(t),
\end{equation*}
where
\begin{equation*}
    e_1=\alpha_0'(\zeta_2)
    =\frac{\frac1{\zeta_2}+\frac{\sigma_0}{N_0}}{\alpha_0(\zeta_2)},\qquad
    e_2=\frac{N_0}{N_0+N_1}\frac{\alpha_0'(z)-\alpha_0'(\zeta_2)}{\frac1{z}-
    \frac1{\zeta_2}},
\end{equation*}
yielding the approximation
\begin{align*}
\frac{\sqrt{2\pi N_0} \Gamma(N_1+1)\expe^{N_1}}{N_1^{N_1+1}}&d_2(z)
\int_z^\infty \expe^{-\frac12\alpha_0^2(t)N_0-\frac12\alpha_1^2(t)N_1}
\alpha_0'(t)\id t\\
\sim\; &  -d_2(z) e_1 \frac{\sqrt{2\pi} N_0^{N_0+\frac12}\expe^{-N_0}\Gamma(N_1)
  \left(\expe^{-\pi\im}\sigma_2\right)^{N_2-1}}{\left(\expe^{-\pi\im}\sigma_0\right)^{N_0}
  \left(\expe^{-\pi\im}\sigma_1\right)^{N_1}\Gamma(N_2)}\frac{\expe^{-(\sigma_0+\sigma_1)z}}{
  z^{N_0+N_1-1}}F^{(1)}\left(z;\mytop{N_0+N_1}{\sigma_0+\sigma_1}\right)\\
& +d_2(z) e_2 \frac{\sqrt{2\pi} N_0^{N_0-\frac12}\expe^{-N_0}\Gamma(N_1)}
  {\left(\expe^{-\pi\im}\sigma_0\right)^{N_0}\left(\expe^{-\pi\im}\sigma_1\right)^{N_1}}
  \frac{\expe^{-(\sigma_0+\sigma_1)z}}{z^{N_0+N_1}}\\
=\; &  d_2(z) e_1 \frac{\sqrt{2\pi} N_0^{N_0+\frac12}\expe^{-N_0}\Gamma(N_1)
  \sigma_2^{N_2-1}}{\sigma_0^{N_0}
  \sigma_1^{N_1}\Gamma(N_2)}\frac{\expe^{-(\sigma_0+\sigma_1)z}}{
z^{N_0+N_1}}F^{(1)}\left(z;\mytop{N_0+N_1+1}{\sigma_0+\sigma_1}\right)\\
& +d_2(z) e_1 \frac{\sqrt{2\pi} N_0^{N_0+\frac12}\expe^{-N_0}\Gamma(N_1)
  }{\sigma_2\left(\expe^{-\pi\im}\sigma_0\right)^{N_0}
  \left(\expe^{-\pi\im}\sigma_1\right)^{N_1}}\frac{\expe^{-(\sigma_0+\sigma_1)z}}{z^{N_0+N_1}}\\
& +d_2(z) e_2 \frac{\sqrt{2\pi} N_0^{N_0-\frac12}\expe^{-N_0}\Gamma(N_1)}
  {\left(\expe^{-\pi\im}\sigma_0\right)^{N_0}\left(\expe^{-\pi\im}\sigma_1\right)^{N_1}}
  \frac{\expe^{-(\sigma_0+\sigma_1)z}}{z^{N_0+N_1}}.
\end{align*}
In the final step, we employ \eqref{F1asympt} with $N=1$. Consequently, the last term can be incorporated into $\bigO{\left(\AbsErfc N_1^{-1}\right)}$, and similarly for the penultimate terms, since $e_1=\alpha_0'(\zeta_2)=\bigO{\left( N_1^{-1}\right)}$.
 
For $w_2(z)$, we utilise
\begin{multline*}
  \int_1^\infty \expe^{-\sigma_2 zt}t^{-N_2-1}\left(g_0(zt)-g_0(\zeta_2)\right)\id t=
\int_1^\infty \expe^{-\sigma_2 zt}t^{-N_2}\left(z-\frac{\zeta_2}{t}\right)
\frac{g_0(zt)-g_0(\zeta_2)}{zt-\zeta_2}\id t\\
 =\frac{\expe^{-\sigma_2 z}}{\sigma_2}\frac{g_0(z)-g_0(\zeta_2)}{z-\zeta_2}
 +\frac{1}{\sigma_2}\int_1^\infty \expe^{-\sigma_2 zt}t^{-N_2}\frac{\d }{\d t}
 \left(\frac{g_0(zt)-g_0(\zeta_2)}{zt-\zeta_2}\right)\id t.
\end{multline*}
Thus, we derive the asymptotic approximation
\begin{multline*}
 w_2(z)\sim 
\im\frac{\sigma_2^{N_2}}{\sigma_0^{N_0}
\sigma_1^{N_1}}\frac{\Gamma(N_0)\Gamma(N_1+1)}{\Gamma(N_2+1)}
\frac{\expe^{-(\sigma_0+\sigma_1)z}}{z^{N_0+N_1}}
F^{(1)}\left(z;\mytop{N_0+N_1+1}{\sigma_0+\sigma_1}\right)g_0(\zeta_2)\\
 -\frac{\im\sqrt{2\pi}\Gamma(N_1+1)N_0^{N_0-\frac12}\expe^{-N_0}}{\sigma_2
\left(\expe^{-\pi\im}\sigma_0\right)^{N_0}\left(\expe^{-\pi\im}\sigma_1\right)^{N_1}}\frac{g_0(z)-g_0(\zeta_2)}{z-\zeta_2}
\frac{\expe^{-(\sigma_0+\sigma_1)z}}{z^{N_0+N_1}}.
\end{multline*}
From this result and Theorem \ref{normalStokes} with all occurrences of $*_0$ replaced by $*_2$, we conclude that both terms in the approximation for $w_2(z)$ are $\bigO{\left(\AbsErfc \right)}$, and the remainder in this approximation is $\bigO{\left(N_1^{-1} \right)}$ smaller in magnitude.

Lastly, comparing the representation of $w_2(z)$ in \eqref{wwr} with $r_1(z)$, we observe an additional factor of $N_0^{-1}$. Based on the observation from the previous paragraph, it follows that $r_1(z)=\bigO{\left(\AbsErfc N_1^{-1}\right)}$.
\end{proof}
 
\section{Conclusion and discussion}\label{discussion}

We have rigorously proved that the higher-order Stokes phenomenon arising from the second-order hyperterminants $F^{(2)}$ may be smoothed using a universal prefactor that is based on the Gaussian convolution of an error function \eqref{newserfc0}.  As such this approach is expected to be valid for all functions possessing a Borel transform representation with algebraic singularities in the associated Borel plane.  We have provided a variety of examples of the occurrence of the smoothing, which demonstrate that it may give rise to ``ghost-like" contributions on and in the immediate vicinity of a higher-order Stokes line but which rapidly and smoothly vanish elsewhere. 

Moreover, when a Stokes line and a higher-order Stokes line coincide, the value of the Stokes multiplier on the Stokes line is no longer given by the value of $\frac{1}{2}$ associated with an ordinary Stokes phenomenon, but the contribution of the higher-order Stokes phenomenon may amend this to the value given by the formula \eqref{ourarctan}, involving an arctangent function. It would be interesting to examine if this formula can be derived using a median summation approach \cite{DelabaerePham1999,Ecalle1993}.

Given that uniform approximations of hyperterminants $F^{(1)}$ give rise to the error function smoothing \eqref{localStokes1} of the ordinary Stokes phenomenon, and in turn that the uniform approximation of hyperterminants $F^{(2)}$ gives rise to the Gaussian convolution of an error function smoothing \eqref{newserfc0} of the first higher-order Stokes phenomena, this is likely to be the start of a hierarchy of such convolution smoothing functions.  Each such function would arise from uniform approximations of the increasingly higher-order hyperterminants $F^{(n)}, n\ge 2$.  Obviously this could be investigated, but is likely to have a decreasing analytical benefit for the rapidly increasing amount of work required (whether rigorous or formal). 

Finally, we speculate on the physical significance of the smoother higher-order Stokes phenomenon.  It is well known that in field theories, the Stokes phenomenon is associated with particle creation, see for example \cite{PhysRevLett.104.250402, Hashiba_2021}.  Given the circumstances we have explained here can lead to a smooth ghost-like contribution only in the immediate vicinity of a (higher-order) Stokes line, could this be something to do with vacuum fluctuations?

\section*{Acknowledgements}
The authors wish to thank the Isaac Newton Institute for Mathematical Sciences for their support during the programme ``Applicable Resurgent Asymptotics: Towards a Universal Theory'', funded by EPSRC Grant No. EP/R014604/1. JRK gratefully acknowledges a Royal Society Leverhulme Trust Research Fellowship. GN’s research was partially supported by the JSPS KAKENHI Grant No. 22H01146. AOD's research was supported by the Measurement Science and Engineering (MSE) Research Grant No. 60NANB23D131 from the National Institute of Standards and Technology.

\appendix

\section{A new integral representation for the second hyperterminant function}\label{newrep}

In this appendix we derive the main underpinning result, a new integral representation for the second hyperterminant function. We also provide background information on the hyperterminants that may be of use to those using different machinery to study the same problem.

A detailed description of the hyperterminant functions is provided in references \cite{BHNO2018,OD98c,OD09a}. Although the ordinary differential equations governing these functions are not as widely documented, they prove valuable when employing alternative methods such as matched asymptotics to derive exponentially-improved asymptotic expansions. In the following, we demonstrate how the inhomogeneous ODEs lead directly to a novel integral representation for the second hyperterminant function.

The first hyperterminant $y(z)=F^{(1)}\left(z;\mytop{N_0+1}{\sigma_0}\right)$ satisfies the inhomogeneous ODE
\[
zy'(z)-(N_0+\sigma_0 z)y(z)=\frac{\Gamma(N_0+1)}{\left(\expe^{-\pi\im}\sigma_0\right)^{N_0}}.
\]
Upon differentiating both sides with respect to $z$, we obtain the following homogeneous ODE:
\[
zy''(z)+(1-N_0-\sigma_0 z)y'(z)-\sigma_0y(z)=0.
\]
A second solution to this homogeneous equation is given by $y(z) = \expe^{\sigma_0 z}z^{N_0}$. For the re-scaled first hyperterminant $v(z)=\expe^{-\sigma_0 z}z^{-N_0}
F^{(1)}\left(z;\mytop{N_0+1}{\sigma_0}\right)$, we find
$$v'(z)=\frac{\Gamma(N_0+1)}{\left(\expe^{-\pi\im}\sigma_0\right)^{N_0}}\expe^{-\sigma_0 z}z^{-N_0-1},
\qquad v''(z)+\left(\sigma_0+\frac{N_0+1}{z}\right)v'(z)=0.$$

The second hyperterminant 
$y(z)=F^{(2)}\left(z;\mytop{N_0+1,}{\sigma_0,}\mytop{N_1+1}{\sigma_1}\right)$ is a solution of the inhomogeneous ODE
\begin{equation}\label{secondhyperterminantODE}
zy''(z)+(1-N_0-N_1-(\sigma_0+\sigma_1) z)y'(z)
-(\sigma_0+\sigma_1)y(z)=\frac{\Gamma(N_1+1)}{\left(\expe^{-\pi\im}\sigma_1\right)^{N_1}}
\frac{\d }{\d z}F^{(1)}\left(z;\mytop{N_0+1}{\sigma_0}\right).
\end{equation}
Solutions to the homogeneous part include $F^{(1)}\left(z;\mytop{N_0+N_1+1}{\sigma_0+\sigma_1}\right)$
and $\expe^{(\sigma_0+\sigma_1) z}z^{N_0+N_1}$.
For the re-scaled second hyperterminant $v(z)=\expe^{-(\sigma_0+\sigma_1) z}z^{-N_0-N_1}
F^{(2)}\left(z;\mytop{N_0+1,}{\sigma_0,}\mytop{N_1+1}{\sigma_1}\right)$,
we find
$$v''(z)+\left(\sigma_0+\sigma_1+\frac{N_0+N_1+1}{z}\right)v'(z)=
\frac{\Gamma(N_1+1)}{\left(\expe^{-\pi\im}\sigma_1\right)^{N_1}}\expe^{-(\sigma_0+\sigma_1) z}z^{-N_0-N_1-1}
\frac{\d }{\d z}F^{(1)}\left(z;\mytop{N_0+1}{\sigma_0}\right).$$

By employing the method of variation of parameters, we derive the following new representation:
\begin{gather}\begin{split}\label{newintegral}
F^{(2)}&\left(z;\mytop{N_0+1,}{\sigma_0,}\mytop{N_1+1}{\sigma_1}\right)=\\
-& \frac{\left(\sigma_0+\sigma_1\right)^{N_0+N_1+1}}{\sigma_0^{N_0+1}
\sigma_1^{N_1}}\frac{\Gamma(N_0+1)\Gamma(N_1+1)}{\Gamma(N_0+N_1+2)}
\genhyperF{2}{1}{1}{N_0+1}{N_0+N_1+2}{1+\frac{\sigma_1}{\sigma_0}}
F^{(1)}\left(z;\mytop{N_0+N_1+1}{\sigma_0+\sigma_1}\right)\\
-& \frac{\Gamma(N_1+1)}{\left(\expe^{-\pi\im}\sigma_1\right)^{N_1}}\expe^{(\sigma_0+\sigma_1) z}z^{N_0+N_1}
\int_z^\infty \expe^{-(\sigma_0+\sigma_1)t}t^{-N_0-N_1-1}
F^{(1)}\left(t;\mytop{N_0+1}{\sigma_0}\right)\id t.
\end{split}\end{gather}
The key points to note here are that the right-hand side of \eqref{newintegral} solves \eqref{secondhyperterminantODE}, and when $N_0$, $N_1$ are bounded,
$\Re(\sigma_0)>0$, $\Re(\sigma_1)>0$, $\Im(\sigma_1-\sigma_0)>0$, 
and $\Re(z)$ is large and positive, all terms in \eqref{newintegral} are well-defined. Furthermore,
\begin{gather*}
\begin{split}
F^{(2)}\left(z;\mytop{N_0+1,}{\sigma_0,}\mytop{N_1+1}{\sigma_1}\right) & \sim\frac{-1}z 
F^{(2)}\left(0;\mytop{N_0+2,}{\sigma_0,}\mytop{N_1+1}{\sigma_1}\right),\\
F^{(1)}\left(z;\mytop{N_0+N_1+1}{\sigma_0+\sigma_1}\right) & \sim
\frac{-1}z F^{(1)}\left(0;\mytop{N_0+N_1+2}{\sigma_0+\sigma_1}\right),
\end{split}
\end{gather*}
and the final term in \eqref{newintegral}
is $\bigO(z^{-2})$. Hence, \eqref{newintegral} holds if
\begin{multline*}
F^{(2)}\left(0;\mytop{N_0+2,}{\sigma_0,}\mytop{N_1+1}{\sigma_1}\right)=\\
-\frac{\left(\sigma_0+\sigma_1\right)^{N_0+N_1+1}}{\sigma_0^{N_0+1}
\sigma_1^{N_1}}\frac{\Gamma(N_0+1)\Gamma(N_1+1)}{\Gamma(N_0+N_1+2)}
\genhyperF{2}{1}{1}{N_0+1}{N_0+N_1+2}{1+\frac{\sigma_1}{\sigma_0}}
F^{(1)}\left(0;\mytop{N_0+N_1+2}{\sigma_0+\sigma_1}\right).
\end{multline*}
This identity is, however, a straightforward consequence of \cite[Eqs. (2.2) and (3.2)]{OD09a}.

The right-hand side of \eqref{newintegral} contains two terms. The first term incorporates the $\sigma_0\leftrightarrow \sigma_1$ higher-order Stokes phenomenon, while the final term includes the $z\leftrightarrow \sigma_0$ higher-order Stokes phenomenon, {\it cf.} connection formulae \eqref{connect2} and \eqref{connect3}.

We use analytic continuation to remove the constraints on the parameters in the derivation of \eqref{newintegral}. However, when changing $\arg(\sigma_0+\sigma_1)$, we must rotate the contour of integration to ensure $\left|\arg((\sigma_0+\sigma_1)t)\right|<\frac{\pi}{2}$ as $t\to\infty$ along the path of integration. While this adjustment is always possible, we must avoid crossing too many branch cuts of the functions involved in equation \eqref{newintegral}. Therefore, we will use the constraints $\left|\arg(\sigma_0 z)\right|\le\frac{3\pi}{2}-\delta$ ($<\frac{3\pi}{2}$) and $\left|\arg\big(1+\frac{\sigma_1}{\sigma_0}\big)\right|\le\pi-\delta$ ($<\pi$).

\section{The new special function and its evaluation}\label{Newerfc}

The new approximant that we encountered in \S\ref{sectcoll} is given by
\begin{equation}\label{newserfc}
\Erfc(x;y;\lambda)=\frac{2}{\sqrt{\pi}}\int_{x}^\infty \expe^{-\left(\tau-y\right)^2}
\erfc\left(\lambda \tau\right)\id \tau.
\end{equation}
Initially, $x,y\in \mathbb C$, $\lambda \ge 0$ and the path of integration is chosen such that $\arg \tau=0$ for sufficiently large values of $|\tau|$. It is then extended to $\lambda \in \mathbb{C}\setminus \left\{\im t: t \in \mathbb{R},\, | t | \ge 1\right\}$ through analytic continuation, achieved by appropriately deforming the integration contour. In this manner, $\Erfc(x;y;\lambda)$ becomes an analytic function over the domain  $\mathbb{C}\times\mathbb{C}\times  \mathbb{C}\setminus \left\{\im t: t \in \mathbb{R},\, | t | \ge 1\right\}$. This function possesses the following properties that are relevant to the argument in the main text.

Integration by parts applied to \eqref{newserfc} yields
\begin{equation}\label{reflection2}
\Erfc(x;y;\lambda)+\Erfc\left(\lambda (x-y);-\lambda y;\lambda^{-1}\right)=\erfc(x-y)\erfc(\lambda x), \quad \lambda \ne 0.
\end{equation}
Specific values are as follows:
\begin{equation}\label{special}
\Erfc(x;y;0)=\erfc(x-y),\qquad \Erfc(x;0;1)=\tfrac12\erfc^2(x),\qquad 
\Erfc(0;0;\lambda)=1-\tfrac2\pi \arctan(\lambda).
\end{equation}
Additionally, the function satisfies the reflection formulae
\begin{align}\label{reflection}
& \Erfc(-x;y;\lambda)=\Erfc(x;-y;\lambda)+2\erfc\left(\frac{\lambda y}{\sqrt{\lambda^2+1}}\right)-2\erfc(x+y),
\\ & \Erfc(x;y; - \lambda ) =  - \Erfc(x;y;\lambda ) + 2\erfc(x - y),\nonumber
\end{align}
and serves as a  particular solution to the linear differential equation
\begin{equation}\label{SpecialODE}
\frac{\d}{\d x}\left(\expe^{\left(x-y\right)^2}\frac{\d w(x)}{\d x}\right)=
\frac{4\lambda}{\pi} \expe^{-\lambda^2x^2} ,
\end{equation}
whose general solution is of the form
\[
w(x)=A+B\erfc(x-y)+\Erfc(x;y;\lambda), \quad A,B \in \mathbb C.
\]

Furthermore, the new special function possesses the following partial derivatives:
\begin{align}\label{partialErfcx}
\frac{\partial \Erfc(x;y;\lambda )}{\partial x} &=-\frac{2}{\sqrt \pi  }\expe^{-\left(x-y\right)^2 } 
\erfc(\lambda x), \\
\label{partialErfcy}
\frac{\partial \Erfc(x;y;\lambda )}{\partial y} &=\frac{2}{\sqrt{\pi}}\expe^{-\left(x-y\right)^2 }
\erfc(\lambda x)
-\frac{2\lambda\expe^{\frac{-\lambda^2 y^2}{\lambda^2+1}}\erfc
\left(\frac{\lambda^2x+x-y}{\sqrt{\lambda^2+1}}\right)}{\sqrt{\pi}\sqrt{\lambda^2+1}},\\
\nonumber
\frac{\partial \Erfc(x;y;\lambda )}{\partial\lambda } &=-\frac{2\expe^{
-\lambda^2x^2-\left(x-y\right)^2 } }{\pi\left(\lambda ^2 +1\right)}
-\frac{2y\expe^{\frac{-\lambda^2 y^2}{\lambda^2+1}}\erfc
\left(\frac{\lambda^2x+x-y}{\sqrt{\lambda^2+1}}\right)}{\sqrt{\pi}\left(\lambda^2+1\right)^{3/2}}.
\end{align}

We can utilise the differential equation \eqref{SpecialODE} to derive a recurrence relation for the coefficients $a_n=a_n(y,\lambda)$ of the Maclaurin series
\[
\Erfc(x;y;\lambda)=\Erfc(0;y;\lambda)-\frac{2}{\sqrt{\pi}}
\expe^{-y^2}\sum_{n=1}^\infty a_n x^n,
\]
with respect to the variable $x$. Substituting into \eqref{SpecialODE}, we obtain
\begin{align*}
(n+3)(n+2)(n+1)a_{n+3}=4y(n+2)(n+1)a_{n+2}
&-2\left(\left(\lambda^2+2\right)n+2y^2+1\right)(n+1)a_{n+1}\\
&+4\left(\lambda^2+2\right))yna_n-4\left(\lambda^2+1\right)(n-1)a_{n-1},
\end{align*}
for $n=0,1,2,\ldots$, with initial conditions
\[
a_{-1}=a_0=0,\quad a_1=1,\quad a_2= y-\frac{\lambda}{\sqrt{\pi}} .
\]
Thus, it becomes necessary to establish a formula for $\Erfc(0;y;\lambda)$. We already know from \eqref{special} that $\Erfc(0;0;\lambda)=1-\tfrac2\pi \arctan(\lambda)$. This can be generalised as
\begin{align*}
\Erfc(0;y;\lambda)=1-\tfrac2\pi \arctan(\lambda)&+\erf y-
\erf\left(\frac{ \lambda y}{\sqrt{\lambda^2+1}}\right)
-\frac{2\lambda}{\pi}\int_0^{\frac{y}{\sqrt{\lambda^2+1}}} 
\expe^{-t^2\lambda^2}\erf t\id t\\
=1-\tfrac2\pi \arctan(\lambda)&+\erf y-
\erf\left(\frac{ \lambda y}{\sqrt{\lambda^2+1}}\right)\\
&+\frac{2}{\pi}\sum_{k=0}^\infty
\frac{\left(\frac{-y^2}{\lambda^2+1}\right)^{k+1}}{(k+1)!}
\sum_{m=0}^k \binom{k}{m}\frac{\lambda^{2m+1}}{2k-2m+1},
\end{align*}
using \cite[\href{http://dlmf.nist.gov/7.6.E1}{Eq. (7.6.1)}]{NIST:DLMF}.

In the following theorem, we present an asymptotic expansion for $\Erfc(x;y;\lambda )$, applicable when $x$ is large and $y$ and $\lambda$ are bounded.

\begin{theorem}
Let $y$ and $\lambda$ be bounded such that $\Re(\lambda^2)\geq0$ and $\lambda\neq 0$. Then  
    \begin{equation}\label{LargeX2}
        \Erfc(x;y;\lambda )\sim \expe^{-\lambda^2x^2-\left(x-y\right)^2 }
    \sum_{n=2}^\infty \frac{b_n}{x^n},
    \end{equation}
    as $x\to\infty$ in the sector $\left|\arg x\right|\le\frac{\pi}{2}-\delta$ ($<\frac{\pi}{2}$). The coefficients $b_n=b_n(y,\lambda)$ satisfy the recurrence relation
    \begin{equation*}
4\lambda^2\left(\lambda^2+1\right)b_{n+2} =4y\lambda^2 b_{n+1}
    -2\left((2n-1)\left(\lambda^2+1\right)-n\right) b_n+2y(n-1)b_{n-1}-(n-1)(n-2)b_{n-2},
\end{equation*}
for $n=1,2,3,\ldots$, with initial conditions
\[
b_{-1}=b_0=b_1=0,\quad b_2=\frac1{\pi\lambda\left(\lambda^2+1\right)}.
\]
\end{theorem}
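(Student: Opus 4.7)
The strategy is twofold: first, derive the recurrence by substituting a formal ansatz into the inhomogeneous ODE \eqref{SpecialODE} satisfied by $\Erfc(x;y;\lambda)$; second, establish that the resulting formal series is genuinely asymptotic to $\Erfc$ in the stated sector via the integral representation.

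For the first step, substitute $w(x) = \phi(x) V(x)$ into \eqref{SpecialODE} with $\phi(x) = \expe^{-\lambda^2 x^2 - (x-y)^2}$. Using $\phi'(x) = -2[(\lambda^2+1)x - y]\phi(x)$, a direct computation reduces the equation to
\begin{equation*}
V''(x) + (2y - 2(2\lambda^2+1)x)\,V'(x) + [4\lambda^2(\lambda^2+1)x^2 - 4\lambda^2 y x - 2(\lambda^2+1)]\,V(x) = \frac{4\lambda}{\pi}.
\end{equation*}
I then insert the formal Laurent ansatz $V(x) = \sum_{n \ge -1} b_n x^{-n}$ with the convention $b_{-1}=b_0=b_1=0$, and match coefficients of $x^{-m}$ for $m \ge 0$: the equation at $x^0$ reads $4\lambda^2(\lambda^2+1)b_2 = 4\lambda/\pi$, fixing $b_2 = 1/[\pi\lambda(\lambda^2+1)]$; the equation at $x^{-1}$ yields $b_3 = y b_2/(\lambda^2+1)$ (which is the recurrence at $n=1$ since $b_1 = 0$); and for $m \ge 2$ the relation at $x^{-m}$, after rearrangement, is precisely the stated five-term recurrence with $n=m$.

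For the second step, I integrate \eqref{partialErfcx} from $x$ to $\infty$ --- legitimate because $\Erfc \to 0$ exponentially in the sector --- to obtain
\begin{equation*}
\Erfc(x;y;\lambda) = \frac{2}{\sqrt{\pi}}\int_x^\infty \expe^{-(t-y)^2}\erfc(\lambda t)\,\d t.
\end{equation*}
Substitute the classical asymptotic expansion of $\erfc(\lambda t)$ with explicit remainder (valid for large $|\lambda t|$ under $\Re(\lambda^2) \ge 0$), and apply repeated integration by parts to each resulting integral $\int_x^\infty \phi(t)\, t^{-2k-1}\,\d t$, exploiting the identity $\phi'(t) = -2[(\lambda^2+1)t - y]\phi(t)$. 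Each integration by parts extracts a boundary term proportional to $\phi(x)/x^n$ and leaves a smaller remainder integral; collecting these contributions produces the expansion \eqref{LargeX2}.

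The main obstacle is obtaining uniform control of the integration-by-parts remainders throughout the sector $|\arg x|\le\pi/2-\delta$. Under the hypotheses $\Re(\lambda^2) \ge 0$ and $|\arg x|\le\pi/2-\delta$, the integration contour can be taken as a ray along which $\Re[\lambda^2 t^2 + (t-y)^2]$ is monotonically increasing, so $|\phi(t)| \le |\phi(x)|$ along the path and each truncation error is of order $|\phi(x)||x|^{-N-1}$. This yields an expansion of the form $\Erfc(x;y;\lambda) \sim \phi(x)\sum_{n \ge 2} c_n x^{-n}$, and the coefficients $c_n$ must satisfy the same recurrence and initial conditions as the $b_n$ (since both correspond to the unique formal power-series solution of the conjugated ODE that starts at order $x^{-2}$); hence $c_n = b_n$ for all $n$, completing the proof.
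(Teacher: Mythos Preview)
Your derivation of the recurrence via the conjugated ODE $V''+(2y-2(2\lambda^2+1)x)V'+(4\lambda^2(\lambda^2+1)x^2-4\lambda^2yx-2(\lambda^2+1))V=4\lambda/\pi$ is correct and gives $b_2$ and the five-term recursion exactly as stated. The paper arrives at the same recurrence by an equivalent but more implicit route: it defines the remainder $R_N$ by truncating the formal series, substitutes into \eqref{SpecialODE}, and reads off a first-order inhomogeneous ODE for $R_N$ (whose forcing term $\expe^{-\lambda^2x^2}x^{1-N}Q(x)$ vanishes precisely because the $b_n$ satisfy the recurrence).

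Where the two proofs genuinely diverge is in the remainder estimate. The paper does not insert the $\erfc$ expansion or integrate by parts; instead it integrates the ODE for $R_N$ twice to obtain the closed-form double integral
\[
R_N(x;y;\lambda)=\int_x^\infty \expe^{-(t-y)^2}\int_t^\infty \expe^{-\lambda^2 s^2}\,s^{1-N}Q(s)\,\d s\,\d t,
\]
identifies this as the unique decaying particular solution (the homogeneous solutions being $A+B\erfc(x-y)$), and then applies the quadratic substitutions $t-y=\sqrt{u+(x-y)^2}$, $s=\sqrt{v+x^2}$ to linearise both Gaussian exponents, yielding a double integral with bounded kernel and the factor $\expe^{-\lambda^2x^2-(x-y)^2}x^{-N-1}$ extracted explicitly. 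This gives the uniform bound in one stroke. Your approach (erfc expansion $+$ Watson-type integration by parts $+$ uniqueness of the formal series solution) is sound, and the uniqueness argument is a nice way to avoid the bookkeeping of matching the integration-by-parts output to the $b_n$; but the claim that one can take ``a ray along which $\Re[\lambda^2t^2+(t-y)^2]$ is monotonically increasing'' needs more care than you indicate --- the direction must simultaneously keep $\Re[(\lambda^2+1)\expe^{2\im\theta}]>0$, make the starting derivative positive, lie in the validity sector of the $\erfc(\lambda t)$ expansion, and keep $|t|$ bounded below by a multiple of $|x|$, and these constraints interact nontrivially near the sector boundary. The paper's substitutions sidestep all of this.
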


We note that employing the first partial derivative \eqref{partialErfcx} and the standard asymptotic expansion of $\erfc$ \cite[\href{http://dlmf.nist.gov/7.12.E1}{Eq. (7.12.1)}]{NIST:DLMF}, we can derive the following alternative recursion:
\begin{gather*}
\begin{split}
   & \left(\lambda^2+1\right)b_{2n+1} -y b_{2n}+\left(n-\tfrac12\right)b_{2n-1} =0,\\
     &\left(\lambda^2+1\right)b_{2n+2} -y b_{2n+1}+n b_{2n}=
     \frac{\left(-1\right)^n\left(\tfrac12\right)_n}{\pi\lambda^{2n+1}},
\end{split}
\end{gather*}
for $n=1,2,3\ldots\,\,$. Here $(a)_n=\Gamma(a+n)/\Gamma(a)$ denotes the Pochhammer symbol.

\begin{proof} For every $N\geq 3$, we introduce the remainder term $R_N(x;y;\lambda)$ defined by
\begin{equation}\label{LargeX}
    \Erfc(x;y;\lambda )=\expe^{-\lambda^2x^2-\left(x-y\right)^2 }
    \sum_{n=2}^N \frac{b_n}{x^n}+R_N(x;y;\lambda),
\end{equation}
where the coefficients $b_n$ are those specified in the theorem. By substituting \eqref{LargeX} into \eqref{SpecialODE}, we derive a differential equation for $R_N(x;y;\lambda)$:
\begin{equation}\label{RODE} 
    \frac{\partial}{\partial x}\left(\expe^{\left(x-y\right)^2}\frac{\partial}{\partial x}
    R_N(x;y;\lambda)\right)=\expe^{-\lambda^2 x^2} x^{1-N}Q(x),
\end{equation}
where
\begin{gather*}
\begin{split}
Q(x)=4&\lambda^2\left(\lambda^2+1\right)b_{N+1}+\frac{4\lambda^2\left(\lambda^2+1\right)b_{N+2}-4y\lambda^2 b_{N+1}}{x}\\
    & +\frac{2yNb_N-N(N-1)b_{N-1}}{x^2}-\frac{(N+1)Nb_N}{x^3}.
\end{split}
\end{gather*}
This leads to the integral representation
\begin{equation}\label{Rint} 
R_N(x;y;\lambda)=\frac{\sqrt{\pi}}{2}\int_x^{\infty}\Bigl(\erfc(x-y)-\erfc(t-y)\Bigr)
     \expe^{-\lambda^2 t^2}t^{1-N} Q(t)\id t.
\end{equation}
Other solutions to \eqref{RODE} differ from \eqref{Rint} by incorporating an additional term in the form of
\[
A+B \expe^{y^2 } \erf(x - y),  \quad A,B \in \mathbb C,\quad AB\ne 0,
\]
but these do not diminish as $x\to+\infty$. Integrating once by parts in \eqref{Rint} yields
\[
R_N (x;y;\lambda )  =   \int_x^\infty \expe^{ - \left(t - y\right)^2 } \int_t^\infty 
\expe^{ - \lambda ^2 s^2 } \frac{Q(s)}{s^{N-1}}\id s \id t.
\]
Now, by substituting $t-y=\sqrt{u+\left(x-y\right)^2}$ and $s=\sqrt{v+x^2}$, we obtain
\[
R_N (x;y;\lambda ) = \frac{\expe^{ - \lambda ^2 x^2  - \left(x - y\right)^2 }}{{4(x-y)x^{N} }}\int_0^\infty
\frac{\expe^{-u}}{\sqrt{1+\frac{u}{\left(x-y\right)^2}}} \int_{v^*(u)}^\infty
\expe^{-\lambda^2v}\frac{Q\left(\sqrt{v+x^2}\right)}{\left(1+\frac{v}{x^2}\right)^{N/2}}
\id v \id u,
\]
with $v^*(u)=u\left(1+\frac{2y}{\sqrt{u+\left(x-y\right)^2}+x-y}\right)$. The paths of integration are chosen such that $\left|\arg u\right|\le \delta\le \frac{\pi}{2}$ and $\arg v=0$ holds for all sufficiently large values of $|u|$ and $|v|$.
If we assume $\Re(\lambda^2)\geq0$ and $\lambda\ne 0$, then the double integral is bounded within the sector $\left|\arg x\right|\le\frac{\pi}{2}-\delta$ ($<\frac{\pi}{2}$) for sufficiently large $|x|$. Consequently,
\[
R_N (x;y;\lambda )=\expe^{ - \lambda ^2 x^2  - \left(x - y\right)^2 } \bigO\left(x^{-N- 1} \right)
\]
as $x\to\infty$ in the sector $\left|\arg x\right|\le\frac{\pi}{2}-\delta$ ($<\frac{\pi}{2}$).
\end{proof}

Considering the case where $x\to\infty$ in the sector $\left|\arg (-x)\right|\le\frac{\pi}{2}-\delta$ ($<\frac{\pi}{2}$), we can combine \eqref{LargeX2} with the reflection formula 
\eqref{reflection}. In the specific case where $x\to -\infty$, we find
\begin{equation*}
\Erfc(x;y;\lambda ) \sim 2\erfc\left( \frac{\lambda y }{ \sqrt{ \lambda ^2+1 } } \right),
\end{equation*}
provided $y$ and $\lambda$ are bounded, $\Re(\lambda^2)\ge 0$ and $\lambda \ne 0$.

In the theorem that follows, we provide an asymptotic expansion for $\Erfc(x;y;\lambda )$, that is valid when $y$ is large while $x$ and $\lambda$ remain bounded.

\begin{theorem}
    Let $x$ and $\lambda$ be bounded such that $\Re(\lambda^2)>-1$. Then
    \begin{equation}\label{LargeY}
        \Erfc(x;y;\lambda )\sim \erfc(\lambda x)\erfc (x-y)-
        \expe^{-\lambda^2 x^2-\left(x-y\right)^2}\sum_{n=2}^\infty \frac{c_n}{y^n},
    \end{equation}
    as $y\to\infty$ in the sector $\left|\arg (-y)\right|\le\frac{\pi}{2}-\delta$ ($<\frac{\pi}{2}$). 
    The coefficients $c_n=c_n(x,\lambda)$ are polynomials in $x$ and $\lambda$ and are defined in the proof below.
\end{theorem}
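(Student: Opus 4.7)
The starting point of my plan is the identity
\begin{equation*}
\Erfc(x;y;\lambda) = \erfc(\lambda x)\erfc(x-y) - \frac{2\lambda}{\sqrt{\pi}}\int_x^\infty e^{-\lambda^2 t^2}\erfc(t-y)\id t,
\end{equation*}
which I would derive as follows. By \eqref{partialErfcx}, $\partial_x \Erfc(x;y;\lambda) = -(2/\sqrt{\pi})e^{-(x-y)^2}\erfc(\lambda x)$, and direct differentiation of the right-hand side yields the same value. Since both sides tend to zero as $x\to\infty$ in $\left|\arg x\right|\le\pi/2-\delta$ (the left by the preceding expansion \eqref{LargeX2}, the right because $\erfc(x-y)$ decays), the identity holds. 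The task is thereby reduced to an asymptotic expansion of $J(x,y,\lambda):=\int_x^\infty e^{-\lambda^2 t^2}\erfc(t-y)\id t$ as $y\to\infty$ in the stated sector.

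Next I would substitute the standard large-argument expansion
\begin{equation*}
\erfc(t-y)=\frac{e^{-(t-y)^2}}{\sqrt{\pi}(t-y)}\left[\sum_{k=0}^{K-1}\frac{(-1)^k(1/2)_k}{(t-y)^{2k}} + r_K(t-y)\right],
\end{equation*}
valid because $t-y$ remains in a proper subsector of $\left|\arg(t-y)\right|<3\pi/4$ throughout the contour. After the substitution $u = t - x$, each finite-sum integral becomes
\begin{equation*}
I_k = e^{-\lambda^2 x^2-(x-y)^2}\int_0^\infty \frac{e^{-Au-Bu^2}}{(u+x-y)^{2k+1}}\id u,
\end{equation*}
with $A := 2\lambda^2 x + 2(x-y)\sim -2y$ and $B := 1+\lambda^2$. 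The hypothesis $\Re(\lambda^2)>-1$ gives $\Re(B)>0$, so the positive real $u$-axis remains a valid contour (possibly slightly rotated when $\arg y$ is nonzero). Watson's lemma then applies: expanding $e^{-Bu^2}/(u+x-y)^{2k+1}$ as a Taylor series in $u$, integrating termwise using $\int_0^\infty u^j e^{-Au}\id u=j!/A^{j+1}$, and finally re-expanding $1/A$ and $1/(x-y)^m$ as power series in $1/y$ (which produces polynomial-in-$x$ coefficients), I obtain the expansion of each $I_k$ in inverse powers of $y$. Collecting contributions and regrouping by powers of $1/y$ identifies $c_n=c_n(x,\lambda)$ and displays them as polynomials in $x$ and $\lambda$, since $A$ is linear in $\lambda^2$ and in $x$.

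The main obstacle will be the uniform remainder estimate. I would bound the truncation error arising from $r_K(t-y)$ using the standard estimate $\left|r_K(t-y)\right|\le C_K\left|e^{-(t-y)^2}/(t-y)^{2K}\right|$ together with the elementary inequality $\int_x^\infty \left|e^{-\lambda^2 t^2-(t-y)^2}\right|\left|t-y\right|^{-2K-2}\id t = \bigO\left(y^{-2K-2}e^{-\lambda^2 x^2-(x-y)^2}\right)$, valid uniformly in the sector $\left|\arg(-y)\right|\le\pi/2-\delta$. The Watson-style tails in each $I_k$ are handled by the standard one-further-term technique. Choosing $K$ suitably relative to $N$ combines these into $R_N(x;y;\lambda)=e^{-\lambda^2 x^2-(x-y)^2}\bigO\left(y^{-N-1}\right)$. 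The only delicate point will be to verify that when $y$ has nonzero argument, the contour in $t$ (equivalently, in $u$) can be deformed so that $\Re(Au+Bu^2)$ stays bounded below along it; this follows from $\Re(B)>0$ and the sector restriction on $\arg(-y)$, after which the estimates above carry over unchanged.
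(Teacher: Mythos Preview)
Your approach is correct and reaches the same conclusion, but it follows a genuinely different route from the paper's. Both proofs start from the reflection identity \eqref{reflection2}, but you rewrite it so that the large parameter $y$ sits inside the integrand as $\erfc(t-y)$ with the lower limit fixed at $x$; the paper instead changes variables so that the large parameter becomes the lower limit $-y$, writing the correction as $\expe^{-\lambda^2x^2}\int_{-y}^\infty \expe^{-(x+t)^2}w(t)\,\d t$ with $w(t)$ admitting a $1/t$-expansion. You then insert the known large-argument $\erfc$ series and apply Watson's lemma in the endpoint variable $u=t-x$, followed by a secondary re-expansion of $1/A$ and $1/(x-y)$ in powers of $1/y$; the paper instead derives a first-order ODE for $w(t)$, obtains a clean two-term recurrence for auxiliary coefficients $e_n$, and then the recurrence $2c_{n+1}=2xc_n+(1-n)c_{n-1}-(-1)^n e_n$, with the remainder $R_N$ controlled via an explicit integral solution of a first-order ODE in $y$. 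Your route is more elementary and uses only off-the-shelf tools, at the price of a double asymptotic expansion (first in $1/A$, then in $1/y$) whose bookkeeping and uniform remainder tracking must be written out with care; the paper's ODE approach trades that bookkeeping for a cleaner recursive structure and a one-line remainder integral, which makes both the polynomial nature of the $c_n$ and the error bound immediate. One small point: your appeal to \eqref{LargeX2} to justify that $\Erfc(x;y;\lambda)\to 0$ as $x\to\infty$ requires $\Re(\lambda^2)\ge 0$, whereas the present theorem assumes only $\Re(\lambda^2)>-1$; either argue directly from the defining integral (the combined exponent $-(1+\lambda^2)\tau^2+\cdots$ still forces decay) or simply cite \eqref{reflection2}, which yields your identity without any limit argument.
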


\begin{proof}
From \eqref{newserfc} and \eqref{reflection2}, we derive an alternative integral representation:
\begin{gather*}
    \begin{split}
    \Erfc(x;y;\lambda)&=\erfc(\lambda x)\erfc (x-y)-\frac{2\lambda}{\sqrt{\pi}\sqrt{\lambda^2+1}}
        \int_{-y}^\infty \expe^{\frac{-\lambda^2t^2}{ \lambda^2+1}}\erfc\left(x\sqrt{ \lambda^2+1}
        +\frac{t}{\sqrt{\lambda^2+1}}\right)\id t\\
        &=\erfc(\lambda x)\erfc (x-y)-
        \expe^{-\lambda^2 x^2}\int_{-y}^\infty \expe^{-\left(x+t\right)^2}w(t)\id t,
    \end{split}
\end{gather*}
where
\begin{equation*}
w(t)=\frac{2\lambda\expe^{\left(\lambda^2+1\right)x^2+2xt+\frac{t^2}{ \lambda^2+1}}}{\sqrt{\pi}\sqrt{ \lambda^2+1}}\erfc\left(x\sqrt{\lambda^2+1}
+\frac{t}{\sqrt{ \lambda^2+1}}\right).
\end{equation*}
Utilising the standard asymptotic expansion of $\erfc$, we ascertain that the function $w(t)$ remains uniformly bounded for large $t$ within the sector $\left|\arg t\right|\le \frac{\pi}{2}-\delta$ ($<\frac{\pi}{2}$), and it exhibits the asymptotic expansion
\begin{equation*}
      w(t) \sim \sum_{n=1}^\infty \frac{ e_n}{t^n},
\end{equation*}
as $t\to\infty$ in the sector $\left|\arg t\right|\le\frac{\pi}{2}-\delta$ ($<\frac{\pi}{2}$), uniformly for bounded values of $x$ and $\lambda$, $\Re(\lambda ^2 ) > - 1$. The differential equation
\[
w'(t)-2\left(x+\frac{t}{ \lambda^2+1}\right)w(t)=\frac{-4\lambda}{\pi\left(\lambda^2+1\right)}
\]
yields the following recurrence for the coefficients $e_n=e_n(x,\lambda)$:
\begin{equation*}
     e_0=0,\quad e_1=\frac{2\lambda}{\pi} ,\quad 
     e_{n+1}=-\left(\lambda^2+1\right)\left(x e_n+\tfrac12(n-1)e_{n-1}\right),
     \qquad n=1,2,3,\ldots\,\,.
\end{equation*}
For any $N\ge 1$, and when $\left|\arg t\right|< \frac{\pi}{2}$ and $\left|\arg (-y)\right|< \frac{\pi}{2}$, we define the remainder terms $M_N (t;x;\lambda )$ and $R_N (y;x;\lambda )$ as follows:
\begin{equation}\label{truncatedw}
w(t) = \sum\limits_{n = 1}^N \frac{e_n }{t^n }  + M_N (t;x;\lambda )
\end{equation}
and
\begin{equation}\label{truncatedint}
\expe^{-\lambda^2 x^2}\int_{ - y}^\infty  \expe^{ - \left(x + t\right)^2 } w(t)\id t  = \expe^{-\lambda^2 x^2 - \left(x - y\right)^2 } \sum\limits_{n = 2}^N \frac{c_n }{y^n }  + R_N (y;x;\lambda ),
\end{equation}
respectively. Here, the coefficients $c_n=c_n(x,\lambda)$ follow the recurrence relation
\begin{equation*}
     2c_{n+1}=2x c_n+(1-n) c_{n-1}-\left(-1\right)^n e_n,
     \qquad n=1,2,3,\ldots,
\end{equation*}
where $c_0=c_1=0$. Consequently, $c_2=\frac12 e_1=\frac{\lambda}{\pi}$. By differentiating both sides of \eqref{truncatedint} with respect to $y$, and utilising \eqref{truncatedw} along with the recurrence relation for the coefficients $c_n$, we obtain a differential equation for $R_N (y;x;\lambda )$:
\[
\expe^{\lambda^2x^2+\left(x-y\right)^2 } \frac{\partial R_N (y;x;\lambda )}{\partial y} = \frac{Nc_N }{y^{N+1}} - \frac{2c_{N + 1} }{y^N} + M_N (-y;x;\lambda ).
\]
Since $\lim_{y\to -\infty} R_N (y;x;\lambda )=0$, we arrive at the integral representation
\[
R_N (y;x;\lambda ) = \expe^{-\lambda^2 x^2}\int_{ - \infty }^y \expe^{ - \left(x - t\right)^2 } \left( \frac{Nc_N }{t^{N + 1} } - \frac{2c_{N + 1}}{t^N } + M_N ( - t;x;\lambda ) \right)\id t .
\]
By substituting $t=y-s$ and deforming the contour of integration, we obtain
\begin{align*}
\expe^{\lambda^2x^2+\left(x - y\right)^2 } y^N R_N (y;x;\lambda ) =& \int_0^{+\infty} \expe^{ - s^2  - 2\left(x - y\right)s} \\ &\times \left( \frac{Nc_N }{y\big(1 - \frac{s}{y}\big)^{N + 1} } + \frac{2c_{N + 1} }{\big(1 - \frac{s}{y}\big)^N } + y^N M_N (s - y;x;\lambda ) \right)\id s .
\end{align*}
Consequently,
\[
R_N (y;x;\lambda ) = \expe^{-\lambda^2x^2 - \left(x - y\right)^2 } \bigO\left(y^{ - N} \right)
\]
as $y\to\infty$ in the sector $\left|\arg (-y)\right|\le\frac{\pi}{2}-\delta$ ($<\frac{\pi}{2}$). This estimation can be refined by a factor of $y^{-1}$ by considering
\[
R_N (y;x;\lambda ) = \expe^{-\lambda^2x^2 - \left(x - y\right)^2 } \frac{c_{N+1} }{y^{N+1}} + R_{N + 1} (y;x;\lambda ).
\]
\end{proof}

When considering the case that $y\to\infty$ within the sector $\left|\arg y\right|\le\frac{\pi}{2}-\delta$ ($<\frac{\pi}{2}$), we can combine \eqref{LargeY} with the reflection formula 
\eqref{reflection}. In particular, when we are concerned with the case that $y\to +\infty$, we find
\begin{equation}\label{LargeY2}
\Erfc(x;y;\lambda ) \sim 2\erfc\left( \frac{\lambda y }{ \sqrt{ \lambda ^2+1 } } \right),
\end{equation}
provided $x$ and $\lambda$ are bounded and $\Re(\lambda^2)>-1$.

In the following theorem, we present our final result in this appendix: an asymptotic expansion for $\Erfc(x; y; \lambda)$ with a fixed $\lambda$, as $x \to \infty$, uniformly in $y$.

\begin{theorem}
Let $\mu$ and $\lambda$ be bounded such that $\Re (\lambda ^2 ) \ge 0$, $\Re \left( \frac{\left(\lambda ^2  + 1 - \mu \right)^2 }{\lambda ^2  + 1} \right) \ge 0$, $\Re \left(\lambda ^2 +\left(1 - \mu \right)^2\right) > 0$, $\lambda  \ne 0$, and $\lambda ^2  + 1 \ne \mu$. Then  
    \begin{equation*}\Erfc(x;\mu x;\lambda )\sim  \expe^{-\left(\lambda ^2+\left(1-\mu\right)^2\right)x^2} \sum\limits_{n = 1}^{\infty} \frac{d_n}{x^{2n}},
    \end{equation*}
    as $x\to\infty$ in the sector $\left|\arg x\right|\le\frac{\pi}{2}-\delta$ ($<\frac{\pi}{2}$). The coefficients $d_n=d_n(\mu,\lambda)$ satisfy the recurrence relation
\[
\left(\lambda ^2  + \left(1 - \mu \right)^2 \right)d_{n + 1}  + nd_n  = \frac{(-1)^n }{\pi }\left( \tfrac{1}{2} \right)_n \left( \frac{\mu \lambda \left(\lambda ^2  + 1\right)^n }{\left(\lambda ^2  + 1 - \mu \right)^{2n + 1}} + \frac{1 - \mu }{\lambda ^{2n + 1}} \right),
\]
for $n=1,2,3,\ldots$, with initial condition
\[
d_1  = \frac{1}{\pi \lambda \left(\lambda ^2  + 1 - \mu \right)}.
\]
\end{theorem}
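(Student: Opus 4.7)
The plan is to exploit the fact that $F(x) := \Erfc(x;\mu x;\lambda)$ is a function of a single variable whose derivative can be computed in closed form via the chain rule. Combining \eqref{partialErfcx} and \eqref{partialErfcy} with $F'(x)=\frac{\partial \Erfc}{\partial x}(x;\mu x;\lambda)+\mu\frac{\partial \Erfc}{\partial y}(x;\mu x;\lambda)$ yields
\begin{equation*}
F'(x)=(\mu-1)\frac{2}{\sqrt{\pi}}\expe^{-(1-\mu)^2 x^2}\erfc(\lambda x)-\frac{2\mu\lambda}{\sqrt{\pi(\lambda^2+1)}}\expe^{-\lambda^2\mu^2 x^2/(\lambda^2+1)}\erfc\left(\frac{(\lambda^2+1-\mu)x}{\sqrt{\lambda^2+1}}\right).
\end{equation*}
The algebraic identity $\lambda^2\mu^2+(\lambda^2+1-\mu)^2=(\lambda^2+1)(\lambda^2+(1-\mu)^2)$ shows that when each $\erfc$ is replaced by its standard Poincaré expansion, the two contributions to $F'(x)$ share the common exponential factor $\expe^{-Ax^2}$ with $A:=\lambda^2+(1-\mu)^2$, multiplied by asymptotic series in odd negative powers of $x$. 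The hypotheses $\Re(\lambda^2)\ge 0$ and $\Re((\lambda^2+1-\mu)^2/(\lambda^2+1))\ge 0$ are precisely what guarantees that these two Poincaré expansions are valid uniformly in the sector $\left|\arg x\right|\le\pi/2-\delta$.

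Under the further hypothesis $\Re(A)>0$, the integral representation $F(x)=\frac{2}{\sqrt{\pi}}\int_0^\infty \expe^{-((1-\mu)x+s)^2}\erfc(\lambda(x+s))\,\d s$ shows that $F(x)\to 0$ as $x\to\infty$ in the sector, so $F(x)=-\int_x^\infty F'(t)\,\d t$. Termwise integration of the expansion of $F'(t)$ then produces an asymptotic series of the required form $F(x)\sim \expe^{-Ax^2}\sum_{n\ge 1}d_n/x^{2n}$. The coefficients $d_n$ are most efficiently determined by inserting this ansatz back into the explicit formula for $F'(x)$ and matching coefficients of $\expe^{-Ax^2}x^{-(2n+1)}$; the resulting linear relation coincides with the recurrence in the statement. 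For the base case, the $x^{-1}$ coefficient of $\expe^{Ax^2}F'(x)$ simplifies via the identity $(\mu-1)(\lambda^2+1-\mu)-\mu\lambda^2=-A$, yielding $d_1=1/(\pi\lambda(\lambda^2+1-\mu))$, exactly as claimed.

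The main technical obstacle is converting this formal calculation into a rigorous asymptotic statement with uniform remainder bounds, and I would follow the template used earlier in this appendix. Concretely, define $R_N(x):=F(x)-\expe^{-Ax^2}\sum_{n=1}^{N}d_n/x^{2n}$ and differentiate to obtain $R_N'(x)=\expe^{-Ax^2}P_N(x)$, where $P_N(x)=\bigO(x^{-2N-1})$ is the tail left over after subtracting the truncated Poincaré expansions of the two $\erfc$'s from the closed-form expression for $\expe^{Ax^2}F'(x)$. Writing $R_N(x)=-\int_x^\infty R_N'(t)\,\d t$ along a ray deformed into the sector, substituting $t^2=x^2+u$ to linearise the Gaussian, and invoking uniform Poincaré remainder bounds for each $\erfc$ (which hold in the full sector by the hypotheses on $\lambda^2$ and $(\lambda^2+1-\mu)^2/(\lambda^2+1)$), gives the estimate $R_N(x)=\expe^{-Ax^2}\bigO(x^{-2N-2})$ uniformly in $\left|\arg x\right|\le\pi/2-\delta$, which establishes the claimed asymptotic expansion.
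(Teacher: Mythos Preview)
Your proposal is correct and follows essentially the same route as the paper: compute $F'(x)$ via the chain rule from \eqref{partialErfcx}--\eqref{partialErfcy}, recover $F(x)=-\int_x^\infty F'(t)\,\d t$, insert the Poincar\'e expansion of $\erfc$ into each of the two terms, and control the remainder by an integral representation obtained from differentiating $R_N$. The only organisational difference is that the paper isolates a reusable lemma for integrals of the form $\int_x^\infty \expe^{-\alpha^2 t^2}\erfc(\beta t)\,\d t$ (with its own recurrence~\eqref{frecurrence}) and then applies it twice to the decomposition~\eqref{newformula}, whereas you work directly with $F'$ as a single object; the paper also uses the substitution $t=x+s$ rather than your $t^2=x^2+u$, but both linearise the Gaussian and give the same uniform bound.
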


\begin{proof}
We can use the partial derivatives \eqref{partialErfcx} and \eqref{partialErfcy} to calculate
\begin{gather*}\begin{split}
\frac{\d}{\d x}\Erfc(x;\mu x;\lambda )&=\left(\frac{\partial}{\partial x}+
\mu\frac{\partial}{\partial y}\right)\Erfc(x;\mu x;\lambda )\\
&=\frac{2(\mu-1)}{\sqrt{\pi}}\expe^{-\left(1-\mu\right)^2 x^2 }
\erfc(\lambda x)
-\frac{2\mu\lambda}{\sqrt{\pi}\sqrt{\lambda^2+1}}
\expe^{\frac{-\mu^2\lambda^2 x^2}{\lambda^2+1}}\erfc
\left(\frac{\left(\lambda^2+1-\mu\right)x}{\sqrt{\lambda^2+1}}\right).
\end{split}
\end{gather*}
Therefore,
\begin{gather}\begin{split}\label{newformula}
\Erfc(x;\mu x;\lambda )
=\; &\frac{2(1-\mu)}{\sqrt{\pi}}\int_x^{\infty}\expe^{-\left(1-\mu\right)^2 t^2 }
\erfc(\lambda t)\id t\\
& +\frac{2\mu\lambda}{\sqrt{\pi}\sqrt{\lambda^2+1}}\int_x^{\infty}
\expe^{\frac{-\mu^2\lambda^2 t^2}{\lambda^2+1}}\erfc
\left(\frac{\left(\lambda^2+1-\mu\right)t}{\sqrt{\lambda^2+1}}\right)\id t,
\end{split}
\end{gather}
where $\mu$ and $\lambda$ satisfy the conditions stated in the theorem, and the paths of integration are chosen so that $\arg t = 0$ for all sufficiently large values of $t$. By employing the standard asymptotic expansion of $\erfc$, we derive the following asymptotic expansion:
\begin{equation}\label{lemmaasymp1}
\expe^{ - \alpha ^2 t^2 } \erfc(\beta t) \sim \frac{\expe^{ - \left(\alpha ^2  + \beta ^2\right)t^2} }{\sqrt \pi}\sum\limits_{n = 0}^\infty (-1)^n \frac{\left( \frac{1}{2} \right)_n }{(\beta t)^{2n + 1} } 
\end{equation}
as $t\to\infty$ in the sector $\left| \arg t \right| \le \frac{\pi}{2} - \delta$ ($<\frac{\pi}{2}$), where $\alpha\in \mathbb C$ and $\Re (\beta ^2 ) \ge 0$, with $\beta \neq 0$. We aim to establish that
\begin{equation}\label{lemmaasymp2}
\int_x^{\infty}  \expe^{ - \alpha ^2 t^2 } \erfc(\beta t)\id t  \sim \expe^{ - \left(\alpha ^2  + \beta ^2 \right)x^2 } \sum\limits_{n = 1}^\infty   \frac{f_n }{x^{2n} }  ,
\end{equation}
as $x\to\infty$ within the sector $\left| \arg x \right| \le \frac{\pi}{2} - \delta$ ($< \frac{\pi}{2}$), with $\Re(\beta^2) \ge 0$, $\beta \neq 0$, and $\Re(\alpha^2 + \beta^2) > 0$. By differentiating \eqref{lemmaasymp2} with respect to $x$ and equating the resulting expression with \eqref{lemmaasymp1}, we obtain the following recurrence relation for the coefficients $f_n$:
\begin{equation}\label{frecurrence}
f_1  = \frac{1}{2\sqrt \pi  \beta \left(\alpha ^2  + \beta ^2 \right)},\quad  \left(\alpha ^2  + \beta ^2 \right)f_{n + 1}  +  nf_n  = ( - 1)^n \frac{1}{2\sqrt \pi  }\frac{\left( \frac{1}{2} \right)_n }{\beta ^{2n + 1} },\qquad n = 1,2,3,\ldots\,\,  .
\end{equation}
Applying \eqref{lemmaasymp2} and \eqref{frecurrence} to \eqref{newformula} yields the desired result. For any $N \geq 1$, and under the conditions $\left|\arg t\right| < \frac{\pi}{2}$ and $\left|\arg x\right| < \frac{\pi}{2}$, we define the remainder terms $M_N(t; \alpha; \beta)$ and $R_N(x; \alpha; \beta)$ as follows:
\begin{equation}\label{truncated1}
\expe^{ - \alpha ^2 t^2 } \erfc(\beta t) = \frac{\expe^{ - \left(\alpha ^2  + \beta ^2 \right)t^2 } }{\sqrt \pi }\sum\limits_{n = 0}^N ( - 1)^n \frac{\left( \frac{1}{2} \right)_n }{(\beta t)^{2n + 1} }  + M_N (t;\alpha ;\beta )
\end{equation}
and
\begin{equation}\label{truncated2}
\int_x^{\infty}  \expe^{ - \alpha ^2 t^2 } \erfc(\beta t)\id t  = \expe^{ - \left(\alpha ^2  + \beta ^2 \right)x^2 } \sum\limits_{n = 1}^N \frac{f_n }{x^{2n}}  + R_N (x;\alpha ;\beta ),
\end{equation}
respectively. By differentiating both sides of \eqref{truncated2} with respect to $y$, and using \eqref{truncated1} along with the recurrence relation for the coefficients $f_n$, we obtain a differential equation for $R_N (x;\alpha ;\beta )$:
\[
\frac{\partial R_N (x;\alpha ;\beta )}{\partial x} =  - \expe^{ - \left(\alpha ^2  + \beta ^2 \right)x^2 } \frac{2\left(\alpha ^2  + \beta ^2 \right)f_{N + 1} }{x^{2N + 1} } - M_N (x;\alpha ;\beta ).
\]
Given that $\lim_{x \to +\infty} R_N(x; \alpha; \beta) = 0$, we establish the integral representation
\[
R_N (x;\alpha ;\beta ) = \int_x^{ \infty } \expe^{ - \left(\alpha ^2  + \beta ^2 \right)t^2 } \left( \frac{2\left(\alpha ^2  + \beta ^2 \right)f_{N + 1} }{t^{2N + 1} }+ \expe^{\left(\alpha ^2  + \beta ^2 \right)t^2 } M_N (t;\alpha ;\beta ) \right)\id t ,
\]
where the integration path ensures $\arg t = 0$ for sufficiently large $t$. By substituting $t = x + s$ and deforming the contour of integration, we derive
\begin{align*}
\expe^{\left(\alpha ^2  + \beta ^2 \right)x^2} x^{2N + 1} R_N(x; \alpha; \beta) =& \int_0^{+\infty}  \expe^{-\left(\alpha ^2  + \beta ^2 \right)s^2 - 2\left(\alpha ^2  + \beta ^2 \right)xs} \\ & \times\left( \frac{2\left(\alpha ^2  + \beta ^2 \right)f_{N + 1}}{\left(1 + \frac{s}{x}\right)^{2N + 1}} + \expe^{\left(\alpha ^2  + \beta ^2 \right)\left(x + s\right)^2} x^{2N + 1} M_N(x + s; \alpha; \beta) \right) \id s.
\end{align*}
Consequently,
\[
R_N (x;\alpha ;\beta ) = \expe^{ - \left(\alpha ^2  + \beta ^2 \right)x^2 } \bigO\left(  x^{ - 2N - 1}  \right)
\]
as $x\to\infty$ in the sector $\left| \arg x \right| \le \frac{\pi}{2} - \delta$ ($< \frac{\pi}{2}$), with $\Re(\beta^2) \ge 0$, $\beta \neq 0$, and $\Re(\alpha^2 + \beta^2) > 0$. We can improve this estimate by a factor of $x^{-1}$ by considering the identity
\[
R_N (x;\alpha ;\beta ) = \expe^{ - \left(\alpha ^2  + \beta ^2 \right)x^2 } \frac{f_{N + 1} }{x^{2N + 2} } + R_{N + 1} (x;\alpha ;\beta ).
\]
\end{proof}

If we assume, for instance, that $0<\mu<1$, then we can express \eqref{newformula} as the following identity:
\[
\Erfc(x;\mu x;\lambda )=\Erfc\left((1-\mu)x;0;\frac{\lambda}{1-\mu}\right)
+\Erfc\left(\frac{\mu\lambda x}{\sqrt{\lambda^2+1}};0;\frac{\lambda^2+1-\mu}{\mu\lambda}\right).
\]
Through analytic continuation, this identity remains valid provided that all third entries lie within the domain $\mathbb{C}\setminus \left\{\im t: t \in \mathbb{R},\,  |t| \ge 1\right\}$.

\section{Application to a pseudo-parabolic PDE}\label{Sect:pseudo}
Our motivation for the example in \S\ref{telegraphexample} comes from the well-known question of how
the speed of wave propagation in nonlinear dissipative PDEs depends on the 
far-field behaviour of the initial data. Perhaps the best studied  example is Fisher's
equation
\begin{equation}\label{Fisher}
    \frac{\partial u}{\partial t}=\frac{\partial^2 u}{\partial x^2}+u(1-u),
    \qquad u(x,0)=u_0(x),\qquad -\infty<x<\infty, 
\end{equation}
here taken with the initial data
\begin{equation}\label{u0largex}
    u_0(x)\sim \expe^{-\lambda x}\quad {\rm as}\quad x\to+\infty, \qquad \lambda>0.
\end{equation}
Introducing a wavefront location $s(t)$, with $x=s(t)+z$, it is well-established that 
\begin{equation}\label{Up1}
    u\sim U(z),\quad s(t)\sim 2t-\tfrac32\ln t\quad {\rm as}\quad t\to+\infty,
    \quad {\rm with}\quad z=\bigO{(1)},
\end{equation}
for $\lambda>1$, but (see, \cite{Murray2002} and references therein)
\begin{equation}\label{Um1}
    u\sim U(z),\quad s(t)\sim \left(\lambda+\lambda^{-1}\right)t\quad 
    {\rm as}\quad t\to+\infty,
    \quad {\rm with}\quad z=\bigO{(1)},
\end{equation}
for $0<\lambda<1$. 

To provide context for what follows, we first briefly sketch
the relevant arguments in terms of exponentially small quantities whereby
\begin{equation}\label{ahead}
    \frac{\partial u}{\partial t}\sim\frac{\partial^2 u}{\partial x^2}+u,
    \qquad u\sim A(x,t)\expe^{-f(x,t)}
\end{equation}
holds ahead of the wavefront with
\begin{equation}\label{feqn}
    \frac{\partial f}{\partial t}
    +\left(\frac{\partial f}{\partial x}\right)^2+1=0,
    \qquad \frac{\partial A}{\partial t}+2\frac{\partial f}{\partial x}
    \frac{\partial A}{\partial x}=-\frac{\partial^2 f}{\partial x^2}A.
\end{equation}
The relevant solutions to the first of \eqref{feqn} as $t\to+\infty$,
$x=\bigO{(t)}$ take the form $f=tF(\xi)$, $\xi=x/t$ so that
\begin{equation*}
    F=\tfrac14\xi^2-1,\qquad A=a(\xi)/\sqrt{t}
\end{equation*}
(the singular solution) for some $a(\xi)$ and
\begin{equation*}
    F=\lambda\xi-\left(\lambda^2+1\right),\qquad A=\alpha
\end{equation*}
(the general solution) for some constants $\lambda$ and $\alpha$. These combine for
\eqref{ahead} to give
\begin{gather}\label{combine}
    \begin{split}
        f&=\lambda x-\left(\lambda^2+1\right)t,\qquad {\rm rays}\quad x=2\lambda t
        +X,\quad x/t>2\lambda,\\
        f&=\frac{x^2}{4t}-t,\qquad {\rm rays}\quad x=Xt,\quad x/t<2\lambda
    \end{split}
\end{gather}
where $X$ parameterises the rays in each case (and an interior layer with $x=2\lambda t+\bigO{\left(\sqrt{t}\right)}$ provides the transition between
these two regimes). For the results to be applicable to the nonlinear problem
\eqref{Fisher}, the constraint $f>0$ ($u$ exponentially small) must hold, with
$f=0$ identifying the location at which the nonlinearity becomes
non-negligible. In consequence, \eqref{combine} implies $s(t)\sim 2t$ for
$\lambda>1$ but $s(t)\sim\left(\lambda +\lambda^{-1}\right)t$ for $\lambda<1$,
as in \eqref{Up1}--\eqref{Um1}.

We now indicate how the corresponding analysis proceeds for the 
pseudo-parabolic generalisation 
\begin{equation}\label{PP}
    \frac{\partial u}{\partial t}=\frac{\partial^2 u}{\partial x^2}
    +\mu^2\frac{\partial^3 u}{\partial x^2\partial t}+u(1-u),
\end{equation}
of \eqref{Fisher} (with $\mu$ constant) before setting up the problem that we
analyse in \S\ref{telegraphexample}. The Hamilton--Jacobi (eikonal) equation
in this case reads
\begin{equation*}\label{eikonal}
    \frac{\partial f}{\partial t}+\left(\frac{\partial f}{\partial x}\right)^2
    -\mu^2\left(\frac{\partial f}{\partial x}\right)^2
    \frac{\partial f}{\partial t}+1=0
\end{equation*}
so the general solution for $F$ is given by
\begin{equation}\label{FF}
    F=\lambda\xi-\frac{\lambda^2+1}{1-\mu^2\lambda^2},
\end{equation}
while the singular solution to the associated Clairaut equation is given 
parametrically in terms of $P\equiv\frac{\d F}{\d\xi}$ by
\begin{equation}\label{zetaFF}
    \xi=\frac{2\left(1+\mu^2\right)P}{\left(1-\mu^2P^2\right)^2},\qquad 
    F=\frac{\mu^2P^4+\left(1+3\mu^2\right)P^2-1}{\left(1-\mu^2P^2\right)^2}.
\end{equation}
The two acceptable roots to the quartic for $P(\xi)$ lead to the asymptotic
behaviours
\begin{gather}\label{quarticsol}
    \begin{split}
        ({\rm I})\qquad F&\sim -1+\frac{\xi^2}{4(1+\mu^2)}\quad {\rm as}\quad\xi
        \to 0^+,\\ F&\sim\frac\xi\mu -\sqrt{\frac{2(1+\mu^2)\xi}{\mu^3}}
        \quad{\rm as}\quad \xi\to+\infty,\\
        ({\rm II})\qquad F&\sim \frac1{\mu^2}+3\left(1+\mu^2\right)^{\frac13}
        \left(\frac\xi{2\mu^2}\right)^{\frac23}\quad {\rm as}\quad\xi
        \to 0^+,\\ F&\sim\frac\xi\mu +\sqrt{\frac{2(1+\mu^2)\xi}{\mu^3}}
        \quad{\rm as}\quad \xi\to+\infty,
    \end{split}
\end{gather}
the latter giving a contribution exponentially subdominant to the former 
throughout $\xi\in\mathbb{R}^+$. For fast-enough decaying initial data (see
below), if the propagation speed in linearly selected (i.e., a so-called
``pulled front'' arises, {\it cf.} \cite{Stokes1976}) then its wavespeed is given by
\begin{equation}\label{csps}
    c^*=\frac{2(1+\mu^2)P^*}{\left(1-\mu^2{P^*}^2\right)^2},\qquad
    P^*=\sqrt{\frac{\sqrt{(1+\mu^2)(1+9\mu^2)}-1-3\mu^2}{2\mu^2}},
\end{equation}
because $F=0$ at $P=P^*$ in \eqref{zetaFF}.

It follows from \eqref{csps} that $P^*<\mu^{-1}$ and the condition for
``fast-enough'' above is that $\lambda>P^*$ in \eqref{u0largex},
though more must be established in order to confirm this result.
The situation for $\lambda<\mu^{-1}$ is straightforward and corresponds
closely to the $\mu=0$ results: the rays associated with \eqref{FF} have
\begin{equation*}
    x=\frac{2(1+\mu^2)\lambda}{\left(1-\mu^2\lambda^2\right)^2}t+X
\end{equation*}
and it is instructive now to analyse the transition between these and the 
expansion fan associated with \eqref{zetaFF}. This proceeds by setting
\begin{equation*}
    u=\exp\left(-\lambda x+\frac{\lambda^2+1}{1-\mu^2\lambda^2}t\right)W,
    \qquad x=\frac{2(1+\mu^2)\lambda}{\left(1-\mu^2\lambda^2\right)^2}t+z
\end{equation*}
in the linearised version of \eqref{PP} to give
\begin{equation*}
    \left(1-\mu^2\lambda^2\right)\frac{\partial W}{\partial t}=
    \frac{(1+\mu^2)(1+3\mu^2\lambda^2)}{\left(1-\mu^2\lambda^2\right)^2}
    \frac{\partial^2 W}{\partial z^2}
    -\mu^2\left(2\lambda \frac{\partial^2 W}{\partial z\partial t}
    +\frac{2(1+\mu^2)\lambda}{\left(1-\mu^2\lambda^2\right)^2}
    \frac{\partial^3 W}{\partial z^3}
    -\frac{\partial^3 W}{\partial z^2\partial t}\right).
\end{equation*}
As $t\to+\infty$ with $z=\bigO{\left(\sqrt{t}\right)}$ only the first of the 
terms on the right-hand side enters the leading order balance, which thus 
comprises the heat equation (the relevant solution being a complementary error
function) for $\lambda<\mu^{-1}$, but is a backward heat equation for 
$\lambda>\mu^{-1}$, indicating that there is more to be said.

For $\lambda<\mu^{-1}$ the above transition is with the dominant exponentials,
(I) in \eqref{quarticsol}, whereas for $\lambda>\mu^{-1}$ it would be
associated with (II), leading to an exponentially subdominant turning-point
problem. We now isolate perhaps the simplest
problem that captures such a phenomenon and proceed with its analysis in \S\ref{telegraphexample}.
Guided by the $\xi\to+\infty$ behaviour in \eqref{quarticsol} we
consider $\lambda=(1\pm\eps)/\mu$ with $0<\eps\ll 1$ and set
\begin{equation*}
    u=\expe^{-x/\mu}v,\qquad x=X/\eps,\quad t=\eps T
\end{equation*}
to give
\begin{equation}\label{veqn2}
   2\mu\frac{\partial^2 v}{\partial X\partial T}-\frac{1+\mu^2}{\mu^2}v=
   \eps\left(\mu^2\frac{\partial^3 v}{\partial X^2\partial T}-\frac2\mu
   \frac{\partial v}{\partial X}\right)+\eps^2\frac{\partial^2 v}{\partial X^2}.
\end{equation}
In \S\ref{telegraphexample} we investigate the leading-order balance in \eqref{veqn2} 
as $\eps \to 0^+$, scaling out the
$\mu$ dependence and implementing the simplest relevant boundary and initial
conditions, which suffice for our purposes.

Thus we consider
\begin{equation*}
    2\frac{\partial^2 v}{\partial x\partial t}=v,
\end{equation*}
for which we have
\begin{equation*}
    2\frac{\partial f}{\partial t}\frac{\partial f}{\partial x}=1,
\end{equation*}
so that the singular and general solutions are
\begin{equation*}
    F=\pm\sqrt{2\xi},\qquad F=\nu\xi+\frac1{2\nu}
\end{equation*}
respectively, where $\nu$ is an arbitrary constant. 
This simplification suffices to demonstrate the subtleties that
apply in the more general case for $\lambda<\mu^{-1}$, corresponding to
$\nu=-1$ here.

For $\nu=1$ the behaviour is familiar, in terms of both real and complex analysis: the $F=\xi+\frac12$ contribution dominates both of the other two
throughout $\mathbb R^+$ and is present only to the right of the Stokes line
\begin{equation*}
    \left(\Im(\xi)\right)^2=1-2\Re(\xi),
\end{equation*}
across which it is turned off by the $F=\sqrt{2\xi}$ exponential.
Thus on $\mathbb R^+$ it is present only in $\xi>\frac12$, corresponding to its
being transported to the right at the characteristic velocity
\begin{equation}\label{Xdot}
    \dot{x}=\frac12\left(\frac{\partial f}{\partial x}\right)^2=\frac12.
\end{equation}

For $\nu=-1$, \eqref{Xdot} again applies, but the situation is entirely
different: the Stokes lines on which $F=\pm\sqrt{2\xi}$ can turn the
$F=-(\xi+\frac12)$ contribution on or off are now both $\mathbb R^+$
and, while \eqref{Xdot} identifies the location of the (subdominant) turning
point, it cannot be used to determine the regimes in which the latter contribution
is present or absent, even on $\mathbb R^+$: this necessitates the detailed 
analysis of the higher-order Stokes phenomenon that we pursue above -- it
would be hard to view its consequences (notably in terms of the amplitude of the
$F=\xi+\frac12$ contribution) as intuitive.

\pdfbookmark[1]{References}{ref}

\end{document}